\documentclass[11pt]{article}%
\usepackage{amssymb,amsmath,amsfonts,amsthm,array,bm,bbm,color}%
\setcounter{MaxMatrixCols}{30}%

\usepackage{graphicx}
%TCIDATA{OutputFilter=latex2.dll}
%TCIDATA{Version=5.50.0.2960}
%TCIDATA{CSTFile=40 LaTeX article.cst}
%TCIDATA{Created=Thursday, April 11, 2019 16:45:31}
%TCIDATA{LastRevised=F, April 11, 2019 16:56:07}
%TCIDATA{<META NAME="GraphicsSave" CONTENT="32">}
%TCIDATA{<META NAME="SaveForMode" CONTENT="1">}
%TCIDATA{BibliographyScheme=Manual}
%TCIDATA{<META NAME="DocumentShell" CONTENT="Standard LaTeX\Blank - Standard LaTeX Article">}
%BeginMSIPreambleData
\providecommand{\U}[1]{\protect\rule{.1in}{.1in}}
%EndMSIPreambleData

\setlength{\hoffset}{-0.4mm} \setlength{\voffset}{-0.4mm}
\setlength{\textwidth}{158mm} \setlength{\textheight}{235mm}
\setlength{\topmargin}{0mm} \setlength{\oddsidemargin}{0mm}
\setlength{\evensidemargin}{0mm} \setlength\arraycolsep{1pt}
\setlength{\headsep}{0mm} \setlength{\headheight}{0mm}

\numberwithin{equation}{section}

\iffalse
\newtheorem{theorem}{Theorem}

\newtheorem{corollary}[theorem]{Corollary}

\newtheorem{definition}[theorem]{Definition}
\newtheorem{example}[theorem]{Example}

\newtheorem{lemma}[theorem]{Lemma}

\newtheorem{problem}[theorem]{Problem}
\newtheorem{proposition}[theorem]{Proposition}
\newtheorem{remark}[theorem]{Remark}

\newenvironment{proof}[1][Proof]{\noindent\textbf{#1.} }{\ \rule{0.5em}{0.5em}}
\fi

\newtheorem{theorem}{Theorem}[section]
\newtheorem{lemma}[theorem]{Lemma}

\newtheorem{proposition}[theorem]{Proposition}
\newtheorem{remark}[theorem]{Remark}

\newtheorem{definition}[theorem]{Definition}
\newtheorem{hypothesis}[theorem]{Hypothesis}

\def\<{\langle}
\def\>{\rangle}
\def\E{\mathbb{E}}
\def\P{\mathbb{P}}
\def\R{\mathbb{R}}
\def\T{\mathbb{T}}
\def\Z{\mathbb{Z}}

\begin{document}

\title{On the Boussinesq hypothesis for a stochastic\\ Proudman-Taylor model}

\author{Franco Flandoli\footnote{Email: franco.flandoli@sns.it. Scuola Normale Superiore of Pisa, Piazza dei Cavalieri 7, 56124 Pisa, Italy}\qquad
Dejun Luo\footnote{Email: luodj@amss.ac.cn. Key Laboratory of RCSDS, Academy of Mathematics and Systems Science, Chinese Academy of Sciences, Beijing 100190, China and School of Mathematical Sciences, University of Chinese Academy of Sciences, Beijing 100049, China}}
\maketitle
\vspace{-15pt}

\begin{abstract}
We introduce a stochastic version of Proudman-Taylor model, a 2D-3C fluid
approximation of the 3D Navier-Stokes equations, with the small-scale
turbulence modeled by a transport-stretching noise. For this model we may
rigorously take a scaling limit leading to a deterministic model with
additional viscosity on large scales. In certain choice of noises without mirror
symmetry, we identify an AKA effect. This is the first example with a 3D structure
and a stretching noise term.
\end{abstract}

\textbf{Keywords:} Boussinesq hypothesis, Proudman-Taylor model, turbulence, scaling limit, AKA effect

\textbf{MSC (2020):} 76D05, 60H15

\section{Introduction}

\subsection{General comments}

The purpose of this paper is proving that a certain model of a 3D fluid,
incorporating small-scale turbulence by means of a suitable stochastic
modification, manifests the property of dissipation on large scales predicted
by Joseph Boussinesq in 1877 (cf. \cite{Bous}). See Section
\ref{subsect def Boussinesq} below for an illustration of the
property in the framework of stochastic 3D Navier-Stokes equations in vorticity form; the latter will be heuristically derived in Section \ref{sect heuristics} as a starting point of investigation of the Boussinesq property. This property is at the foundation of Large Eddy Simulation (LES) models but its validity is
critical \cite{JiangLayton, Schmi}. And its proof under suitable circumstances
is a question which may count a number of contributions but not a definite
answer, see for instance \cite{Berselli, JiangLayton, LabLay, Wirth}.

From the numerical or experimental side, it is perhaps true that this property
is better verified in certain 3D cases. In 2D, inverse cascade seems to
deteriorate the structure of small-scale turbulence and scale separation
needed to have dissipation at large scales, so the result is true but for a
limited time \cite{Cummins}, and after a while perturbations appear which even
lead to a phenomenon often quoted as negative viscosity. In 3D, small-scale
turbulence is more persistent and thus better oriented to the validity of the
property investigated here; however, other elements in 3D may work against
(for instance the direct cascade) and the picture is perhaps more difficult
than in 2D, see e.g. \cite{Wirth}.

Concerning simplified models in 2D with stochastic inputs as those discussed in
\cite{FGL21a, FGL21c, FLL23, Luo21a}, it has been proved that an additional
positive turbulent viscosity emerges (the validity of such result compared to
direct numerical simulations \cite{Cummins} is limited to an initial time
range). Similar unpublished computations, however, seem to indicate that the
property fails for some 3D fluid models, due to a lack of control on the
stochastic stretching term, as discussed in Section
\ref{subsect def Boussinesq} (see also \cite{FL, Luo23} which involve only the
transport noise). It emerges, among the possible ideas, the one that only a
suitable geometry of the involved vector fields could remedy the excess of
fluctuations in the stretching term. In particular, if the large-scale
vorticity field and the small-scale turbulent velocity field would be
orthogonal, this would cancel the above mentioned stretching contribution. But
such geometry is too extreme and essentially boils down to the 2D problem.

Here we investigate a model which is in between the 2D and the 3D ones, the
so-called 2D-3C model (called also 2.5D model \cite{Seshasa}), where the
dependence on space variables is 2D but the vector fields have three
components. The Physics behind this simplification has been identified by
Proudman \cite{Proud} and Taylor \cite{Taylor} and corresponds to situations
with large Coriolis or rotation forces \cite{Babin, BMN99}. In the transient, corrections to the
2D-3C model should be included, but they will be the
object of future investigation. We think that the phenomena appearing in the
simple 2D-3C model are interesting enough to justify a separate study.

The model is formulated below in Section \ref{2D 3C reduction}. The small
scales are modeled by a noise and in a suitable scaling limit we prove the
existence of the additional dissipation at large scales predicted by
Boussinesq \cite{Bous}.

The noise, as opposed to most previous works \cite{FGL21a, FGL21c, Galeati20}
where it was defined by Fourier decomposition, is modeled on idealized but
still meaningful 3D vortex structures, following \cite{FH}. A Fourier
description is possible and illustrated in Section \ref{subsec-series-noise}, but the assumptions on the covariance
function are suggested by the vortex structure interpretation. In Section
\ref{sect heuristics} we explain the heuristic behind this choice. This new
noise has been essential for us to identify the presence of the so-called AKA
(anisotropic kinetic alpha) effect \cite{Wirth}. It is the presence of a term
with first order derivatives, in addition to the dissipative second order term
predicted by Boussinesq. As shown below and in analogy with previous studies
based on different models and techniques like \cite{Wirth}, the AKA\ effect is
absent when mirror-symmetry (namely parity invariance) is imposed on the
small-scale turbulence. Removing mirror symmetry, still we may have or not
have the AKA effect (as in \cite{Wirth}); we provide examples of both cases.

In the rest of the introduction, we first describe, via the idea of separation of scales, the origin of noise in the vorticity formulation of 3D Navier-Stokes equations, then we discuss the Boussinesq hypothesis
for such stochastic fluid equations, and finally we specialize the system in the 2D-3C setting to get the stochastic model
studied in the paper.

\subsection{Origin of noise in stochastic 3D Navier-Stokes equations}\label{sect heuristics}

Consider a 3D Newtonian viscous fluid described, in vorticity form, by the
equations%
\begin{align*}
\partial_{t}\omega+ v\cdot\nabla\omega-\omega\cdot\nabla v &  =\nu\Delta
\omega, \\
\omega &  =\operatorname{curl}v , \\
\omega|_{t=t_{0}} &  =\omega (t_{0} ) ,
\end{align*}
where $\nu>0$ is the fluid viscosity, $\omega$ is the vorticity field and $v$ the velocity field. In the
rigorous analysis below we shall assume that the space variable is in a torus,
in order to avoid the difficult question of vorticity at a solid boundary and
inessential troubles with the lack of compactness of full space.

We have taken as initial condition a time $t_{0}$ because the heuristic idea
described here is that, if at a certain time $t_{0}$ the vorticity
$\omega\left(  t_{0}\right)  $ is the sum of a large-scale component
$\omega_{L}\left(  t_{0}\right)  $ plus a small-scale component $\omega
_{S}\left(  t_{0}\right)  $ made of several small vortex structures $\omega_{S}%
^{i}\left(  t_{0}\right)  $:
\[
\omega\left(  t_{0}\right)  =\omega_{L}\left(  t_{0}\right)  +\sum_{i=1}%
^{N}\omega_{S}^{i}\left(  t_{0}\right),
\]
then, at least on a short time interval $\left[  t_{0},t_{0}+\Delta\right]  $,
the system
\begin{align*}
\partial_{t}\omega_{L}+ v\cdot\nabla\omega_{L}-\omega_{L}\cdot\nabla v &
=\nu\Delta\omega_{L}, \\
\partial_{t}\omega_{S}^{i}+ v\cdot\nabla\omega_{S}^{i}-\omega_{S}^{i}%
\cdot\nabla v &  =\nu\Delta\omega_{S}^{i}, \\
\omega_{L}|_{t=t_{0}}  =\omega_{L}( t_{0}) , \quad\omega
_{S}^{i}|_{t=t_{0}} &=\omega_{S}^{i}( t_{0})
\end{align*}
represents quite well the evolution of the different vortex structures. This
kind of approach has been used for instance to investigate the small
vortex-blob limit to point vortices \cite{MarPul} in 2D. The idea is that for a short while
the small vortex structures $\omega_{S}^{i}$ maintain their integrity, as
individual objects, before complicated merging and instability mechanisms take
place. The system above is equivalent to the original one, in the sense that if $\big(\omega_L, \omega_S^1, \ldots, \omega_S^N \big)$ is a solution of the system, then $\omega= \omega_L + \sum_{i=1}^N \omega_S^i$ is a solution of the original equation.

The next step is considering only the equation for the large scales, isolating
the term which is not closed, namely depends on the small scales:%
\[
\partial_{t}\omega_{L}+ v_{L}\cdot\nabla\omega_{L}-\omega_{L}\cdot\nabla
v_{L}-\nu\Delta\omega_{L}=-\sum_{i=1}^{N}\left(  v_{S}^{i}\cdot\nabla
\omega_{L}-\omega_{L}\cdot\nabla v_{S}^{i}\right)  .
\]
Here $v_{L}$, with $\operatorname{div} v_{L}=0$, has the property
$\operatorname{curl} v_{L}=\omega_{L}$; namely $v_{L}$ is reconstructed from
$\omega_{L}$ by the Biot-Savart law: $v_L =K\ast \omega_L$ where $K$ is the Biot-Savart kernel; 
and $v_{S}^{i}$ corresponds to $\omega_{S}^{i}$ in the same way.

Until now we have not modified the true Navier-Stokes equations, just
rewritten in a suitable way. The reformulation may not have the desired
properties when the vortex structures $\omega_{S}^{i}$ start to merge or the
large-scale field $\omega_{L}$ starts to develop small-scale instabilities,
but it is reasonable to expect that in a short time the structure is roughly
maintained.

Now we make a relevant approximation, whose validity is a major open problem.
We idealize the small-scale structures $v_{S}^{i}$ by a stochastic process, a
priori given, delta correlated in time. The idealization of delta correlation
in time is very strong, vaguely motivated by the very short time-scale of
$\omega_{S}^{i}$ compared to $\omega_{L}$. The idealization of Gaussianity and
independence on $\omega_{L}$ are strong limitations which should be better
investigated in the future. A large body of rigorous activity exists, see for
instance \cite{DebPapp, FP}, aimed to justify this approximation starting from the original
model, suitably perturbed or simplified.

Essentially, the modeling assumption made below replaces the small-scale
velocity field $\sum_{i=1}^{N} v_{S}^{i}\left(  t,x\right)  $ by
\[
 \sum_{k}\sigma_{k} ( x )  \frac{dW_{t}^{k}}{dt},
\]
where $\{\sigma_{k} \}_k$ are suitable divergence free fields and $\{W_{t}^{k} \}_k$ are
independent scalar Brownian motions. In the replacement, Stratonovich
integrals are used, in accordance to the Wong-Zakai principle (see rigorous
results in \cite{DebPapp, FP}).

\subsection{Boussinesq hypothesis and its difficulty in 3D fluids} \label{subsect def Boussinesq}

The presentation of this subsection is heuristic, the purpose being only to
describe what we mean by Boussinesq hypothesis for the class of stochastic 3D Navier-Stokes equations derived above,
and to explain which is the difficulty to solve the problem
in general (motivating the detailed analysis of the particular case treated in
this work). From now on we omit
the subscript $L$ for the large scales, since many other indices will appear.
But the reader should remember, as a matter of interpretation, that $v$ and $\omega$
are large-scale fields, $\{ \sigma_{k} \}_k$ describe the small-scale turbulent
components. We also conventionally set $t_{0}=0$.  
To simplify notation, for two vector fields $X$ and $Y$, we write $\mathcal{L}_{X} Y$ for the
Lie derivative $\mathcal{L}_{X}Y= X\cdot\nabla Y-Y\cdot\nabla X$.

Motivated by the heuristic discussions in the previous subsection, we consider the following stochastic 3D Navier-Stokes equations
\begin{align*}
d\omega_{\epsilon}+\left(  \mathcal{L}_{v_{\epsilon}}\omega_{\epsilon}%
-\nu\Delta\omega_{\epsilon}\right)  dt  & =-\sum_{k}\mathcal{L}_{\sigma
_{k}^{\epsilon}}\omega_{\epsilon}\circ dW_{t}^{k}, \\
\omega_{\epsilon}|_{t=0}  & =\omega^{0}%
\end{align*}
where $\{ \sigma_{k}^{\epsilon} \}_k$ are given vector fields, now
parametrized by $\epsilon>0$, hence also the solution $\omega_{\epsilon}$
depends on $\epsilon$. The stochastic multiplication is understood in the Stratonovich sense, which
is formally equivalent to the following It\^o equation with a correction term:
\[
d\omega_{\epsilon}+\left(  \mathcal{L}_{v_{\epsilon}}\omega_{\epsilon}%
-\nu\Delta\omega_{\epsilon}\right)  dt =-\sum_{k}\mathcal{L}_{\sigma_{k}^{\epsilon}}\omega_{\epsilon}\, dW_{t}^{k} + \frac{1}{2}\sum_{k}\mathcal{L}%
_{\sigma_{k}^{\epsilon}}\mathcal{L}_{\sigma_{k}^{\epsilon}}\omega_{\epsilon} \, dt .
\]

\textit{We say that Boussinesq hypothesis holds} for a family of coefficients
$\{ \sigma_{k}^{\epsilon} \}_k  $ if $\omega_{\epsilon}$ converges as
$\epsilon\rightarrow0$, in an appropriate sense, to a solution $\overline{\omega}$
of the deterministic equation
\begin{align*}
\partial_{t}\overline{\omega}+\mathcal{L}_{\overline{v}}\overline{\omega}  &
=\left(  \nu\Delta+\mathcal{D}\right)  \overline{\omega}, \\
\overline{\omega}|_{t=0}  & =\omega^{0}%
\end{align*}
where $\overline{v}=K\ast\overline{\omega}$, $\mathcal{D}$ is a suitable
second order differential operator, limit of $\frac{1}{2}\sum_{k}%
\mathcal{L}_{\sigma_{k}^{\epsilon}}\mathcal{L}_{\sigma_{k}^{\epsilon}}$ as
$\epsilon\rightarrow0$. This operator represents the enhancement of viscosity
produced by the noise in the scaling limit, and possibly incorporating other
effects than viscosity, like the AKA effect. Loosely speaking, we may consider
the Boussinesq hypothesis as a mean field result.

Finding families $\{ \sigma_{k}^{\epsilon} \}_k $ such that the
iterated Lie derivative operator $\frac{1}{2}\sum_{k}\mathcal{L}_{\sigma
_{k}^{\epsilon}}\mathcal{L}_{\sigma_{k}^{\epsilon}}$ has a nontrivial and
interesting limit $\mathcal{D}$ is not difficult. The difficulty is proving
that the It\^{o} terms $\int_{0}^{T}\mathcal{L}_{\sigma_{k}^{\epsilon}}%
\omega_{\epsilon} (t)\,  dW_{t}^{k}$ converge to zero in a certain
sense (in the heuristic mean field vision, they are the fluctuations). And, to
be more precise, the difficulty in proving this result for the It\^{o} terms
lies in \textit{the control of} $\omega_{\epsilon}$. Imposing assumptions on the
noise covariance in order to have convergece to zero of expressions like
\[
\sum_{k}\int_{0}^{T}\left\langle f\left(  t\right)  ,\mathcal{L}_{\sigma
_{k}^{\epsilon}}\phi\right\rangle dW_{t}^{k}%
\]
for a given process $f$ and smooth test function $\phi$, assumptions compatible
with those leading to a non-trivial limit $\mathcal{D}$ of $\frac{1}{2}%
\sum_{k}\mathcal{L}_{\sigma_{k}^{\epsilon}}\mathcal{L}_{\sigma_{k}^{\epsilon}%
}$, is possible. But a control on $\omega_{\epsilon}$ is needed and this is
highly non-trivial for 3D models.

The difficulty is only marginally due to the nonlinear term $\mathcal{L}%
_{v_{\epsilon}}\omega_{\epsilon}$. The same difficulty arises in the
investigation of the Boussinesq hypothesis for a passive magnetic field
$M_{\epsilon}$:
\begin{align*}
dM_{\epsilon}-\eta\Delta M_{\epsilon}\, dt  & =-\sum_{k}\mathcal{L}_{\sigma
_{k}^{\epsilon}}M_{\epsilon}\circ dW_{t}^{k}, \\
M_{\epsilon}|_{t=0}  & =M^{0}%
\end{align*}
($\eta>0$). The mean field equation%
\begin{align*}
\partial_{t}\overline{M}  & =\left(  \eta \Delta+\mathcal{D}\right) \overline{M} ,\\
\overline{M}|_{t=0}  & =M^{0}%
\end{align*}
is, in this linear case, rigorously obtained as the limit of the average of
$M_{\epsilon}$ (see for instance \cite{KrauseRadler}). However, the difficulty
in the control of $M_{\epsilon}$ remains and it is due to the fact that the
random stretching terms $M_{\epsilon}\cdot\nabla\sigma_{k}^{\epsilon}$ have a
stronger and stronger effect as $\epsilon\rightarrow0$.

\subsection{The stochastic 2D-3C model and main result} \label{2D 3C reduction}

As in the last subsection, we start from the
following stochastic version of the 3D Navier-Stokes equations
\begin{equation}\label{sect-1.2-NSE}
\aligned
d\omega+\left( v\cdot\nabla\omega- \omega\cdot\nabla v -\nu\Delta\omega\right)
dt  & =-\sum_{k}\left(  \sigma_{k}\cdot\nabla\omega-\omega\cdot\nabla
\sigma_{k}\right)  \circ dW_{t}^{k}, \\
\omega|_{t=0}  & =\omega_{0}.
\endaligned
\end{equation}
We still consider this equation at a formal level (but only to shorten the
next preliminary steps, since this could be made rigorous).

Assume that, with the notation $x=(x_{1},x_{2},x_{3})$, the initial condition
and the noise are independent of $x_{3}$:
\[
\omega_{0}=\omega_{0}\left(  x_{1},x_{2}\right)  ,\qquad\sigma_{k}=\sigma
_{k}\left(  x_{1},x_{2}\right)  .
\]
However, all vector fields still take values in $\mathbb{R}^{3}$ and we denote
the canonical basis of $\mathbb{R}^{3}$ by $\{e_{1},e_{2},e_{3} \}$. Under the
previous assumption, a solution of the stochastic equations above is given by
\begin{align*}
v\left(  t,x_{1},x_{2}\right)    & =v_{H}\left(  t,x_{1},x_{2}\right)
+v_{3}\left(  t,x_{1},x_{2}\right)  e_{3}, \\
\omega\left(  t,x_{1},x_{2}\right)    & =\omega_{H}\left(  t,x_{1}%
,x_{2}\right)  +\omega_{3}\left(  t,x_{1},x_{2}\right)  e_{3},
\end{align*}
where $v_{H},\, \omega_{H}$ live in the horizontal plane, namely take values in
the span of $\{e_{1},e_{2}\}$. Notice that the relation $\omega= {\rm curl}\, v$ implies
\begin{align*}
\omega_{3}  & =\nabla^{\perp}_H \cdot v_{H}, \quad
\omega_{H} =\nabla^{\perp}_H v_{3} ,
\end{align*}
so the horizontal and vertical components are interlaced. Here, $\nabla^{\perp}_H= (\partial_2, -\partial_1)$ with $\partial_i= \frac{\partial}{\partial x_i}$. The model
incorporates some degree of helicity (vorticity and velocity are not
orthogonal). Decompose also the noise:
\[
W=\sum_{k\in \mathcal I}\sigma_{k} W^{k}=\sum_{k\in \mathcal I}\sigma_{k}^{H} W^{k}+\sum_{k\in
\mathcal I}\sigma_{k}^{3} W^{k}e_{3},
\]
where $\mathcal I$ is some index set. Then we get the model ($\nabla_H= (\partial_1, \partial_2)$ and $\Delta_H= \partial_1^2+ \partial_2^2$)
\begin{align*}
d\omega_{3}+\left( v_{H}\cdot\nabla_H \omega_{3}- \omega_{H}\cdot\nabla_H
v_{3}-\nu\Delta_H \omega_{3}\right)  dt  & =-\sum_{k}\left(  \sigma_{k}^{H}%
\cdot\nabla_H \omega_{3}-\omega_{H}\cdot\nabla_H \sigma_{k}^{3}\right)  \circ
dW_{t}^{k}, \\
dv_{3}+\left( v_{H}\cdot\nabla_H v_{3}-\nu\Delta_H v_{3}\right)  dt  &
=-\sum_{k}\sigma_{k}^{H} \cdot\nabla_H v_{3}\circ dW_{t}^{k}, \\
\omega_{3}|_{t=0}  & =\omega_{3}^{0},\quad v_{3}|_{t=0}=v_{3}^{0} ,
\end{align*}
with obvious meaning of $\omega_{3}^{0}$ and $v_{3}^{0}$, given $\omega_{0}$.
Indeed, taking curl of the second equation, and using the simple identity
  \begin{equation*}
  \nabla_H^\perp( u\cdot\nabla_H v_{3} )= u \cdot\nabla_H \omega_H - \omega_H \cdot\nabla_H u
  \end{equation*}
which holds for any 2D divergence free vector field $u$, we obtain the first two components of the initial vorticity equations \eqref{sect-1.2-NSE}. We remark that the stretching term $\omega_{H}\cdot\nabla_H v_{3}$ in the equation for $\omega_3$ vanishes because $\omega_{H}= \nabla^\perp_H v_3$ is orthogonal to $\nabla_H v_{3}$.

\iffalse
applying $\nabla_H^\perp$ to the second equation yields
  \begin{equation}\label{eq:curl-v-3}
  d\omega_H +\big[ \nabla_H^\perp( v_{H}\cdot\nabla_H v_{3} ) -\nu\Delta_H \omega_H \big]\,  dt =-\sum_{k} \nabla_H^\perp \big(\sigma_{k}^{H} \cdot\nabla_H v_{3} \big)\circ dW_{t}^{k}.
  \end{equation}
For any, it is easy to show that

in fact, we have $\nabla_H^\perp( u\cdot\nabla_H v_{3} ) = \big(\partial_2 (u\cdot\nabla_H v_{3}), - \partial_1 (u\cdot\nabla_H v_{3})\big)$ and
  $$\aligned
  \partial_2 (u\cdot\nabla_H v_{3}) &= \partial_2 u_1\, \partial_1 v_3 + u_1\partial_1 (\partial_2 v_3) + \partial_2 u_2\, \partial_2 v_3 + u_2\partial_2 (\partial_2 v_3) \\
  &= u \cdot\nabla_H (\partial_2 v_3) + \partial_2 u_1\, \partial_1 v_3- \partial_1 u_1\, \partial_2 v_3 \\
  &= u \cdot\nabla_H (\partial_2 v_3)- \omega_H\cdot \nabla_H u_1 ;
  \endaligned $$
similarly, $- \partial_1 (u\cdot\nabla_H v_{3}) = u \cdot\nabla_H (-\partial_1 v_3)- \omega_H\cdot \nabla_H u_2$. From \eqref{eq:curl-v-3.1} we see that \eqref{eq:curl-v-3} coincides with
\fi

In the next sections we make a rigorous analysis of this stochastic 2D-3C model and its scaling limit. Roughly speaking, under a suitable scaling limit of the noise introduced in the next section, we will show that the above stochastic model converges weakly to the following deterministic system
  \begin{equation}\label{2D-3C-limit}
  \left\{ \aligned
  \partial_t\omega_{3} + v_{H} \cdot \nabla_H \omega_{3} &= \big(\nu \Delta_H +\mathcal L_{\bar Q} \big)\omega_3 + \nabla_H \cdot (A\, \omega_{H}),\\
  \partial_t v_3+ v_{H} \cdot \nabla_H v_3 &= \big(\nu \Delta_H +\mathcal L_{\bar Q} \big) v_3,
  \endaligned \right.
  \end{equation}
where $\bar Q$ and $A$ are $2\times 2$ constant matrices, $\bar Q$ being nonnegative definite and  $\mathcal L_{\bar Q} f = \frac12 \nabla_H \cdot \big[\bar Q\, \nabla_H f \big]$, see Theorem \ref{thm:Boussinesq-general-result} below for more details. The system \eqref{2D-3C-limit} can be rewritten in the more familiar vorticity formulation as follows: taking curl of the second equation leads to
  $$\partial_t \omega_H+ v_H \cdot \nabla_H \omega_H - \omega_H \cdot \nabla_H v_H = \big(\nu \Delta_H +\mathcal L_{\bar Q} \big)\, \omega_H; $$
recalling that $\omega_H \cdot\nabla_H v_3 =0$, we combine the first equation in \eqref{2D-3C-limit} and the above one to obtain
  $$\partial_t \omega + v_H\cdot\nabla_H \omega- \omega_H \cdot\nabla_H v = \big(\nu \Delta_H +\mathcal L_{\bar Q} \big)\, \omega + \nabla_H \cdot (A\, \omega_{H})\, e_3 .$$
Since $\partial_3 \omega=\partial_3 v=0$, the nonlinear part can be further written as $v\cdot\nabla \omega- \omega \cdot\nabla v$ with $\nabla=(\partial_1, \partial_2, \partial_3)$. We see that there is an extra dissipation term $\mathcal L_{\bar Q}\, \omega$ predicted by Boussinesq's hypothesis, and a first order term $\nabla_H \cdot (A\, \omega_{H}) e_3$ responsible for the possible AKA effect; the latter is a scalar object multiplied by $e_3$, but we may also write it in the general 3D form, which reduces to the above one due to special cancellations of our set up. \smallskip

We conclude the introduction with the organization of the paper. We introduce in Section \ref{sect-vortex-noise} the
vortex noise which will be used to perturb the model. Section 3 contains
the statements of our main results, including the well-posedness of stochastic
2D-3C models and a scaling limit result to the deterministic model mentioned above. We prove these results in Sections 4 and 5, respectively.

\section{The vortex noise} \label{sect-vortex-noise}

In this section let us describe in detail the vortex noise $W$ that we will use to perturb the 2D-3C model introduced above.

\subsection{The covariance function} \label{subs-covariance-funct}

Let $\T^2= \R^2/\Z^2$ be the two dimensional torus; we may identify it as $[-\frac12, \frac12]^2$ with periodic boundary condition. Recall that for $i\in \{1,2\}$, $\partial_i= \frac\partial{\partial x_i}$, $\nabla_H=(\partial_1, \partial_2)$ and $\nabla_H^\perp = (\partial_2, -\partial_1)$.

%Given a smooth function or vector field $f$ on $\T^2$ and $\ell\in (0,1]$, we denote $f_\ell$ for the mollifier $f_\ell(x)= \ell^{-2} f(\ell^{-1} x)$.

\iffalse

Assume that the vorticity field of an eddy on $\mathbb{T}^{3}=\mathbb{R}^{3}/\mathbb{Z}^{3}$ takes the form below:
\begin{align*}
\omega( \mathbf{x})& =\omega_{H} (\mathbf{x}_{H}) +\omega_{3} (\mathbf{x}_{H})\, \mathbf{e}_{3},
\end{align*}
where $\omega_{H} (\mathbf{x}_{H})= (\omega_1 (\mathbf{x}_{H}), \omega_2 (\mathbf{x}_{H}))$ is a divergence free vector field on $\T^2=\mathbb{R}^{2}/\mathbb{Z}^{2}$; we assume $\omega_i \in C^\infty(\T^2)$. For simplicity, we may assume that the eddy is rotationally symmetric, namely,
  \begin{equation}\label{vortex-symmetry}
  \omega_{H} (-\mathbf{x}_{H}) = - \omega_{H} (\mathbf{x}_{H}), \quad \omega_{3} (-\mathbf{x}_{H}) = \omega_{3} (\mathbf{x}_{H}), \quad \mathbf{x}_{H}\in \T^2.
  \end{equation}
A typical choice of eddies with such properties is
  \begin{equation}\label{typical-vortex}
  \omega_3 = \Gamma\, \theta, \quad \omega_H = \gamma\,K\ast \tilde\theta ,
  \end{equation}
where $\Gamma, \gamma>0$ are vortex intensities, $K$ is the Biot-Savart kernel on $\T^2$ and $\theta, \tilde\theta\in C_c^\infty\big((-\frac12, \frac12)^2 \big)$ are symmetric functions; in particular, we have $\nabla_H^\perp \cdot \omega_H = \gamma \,\tilde\theta$.
\fi

Let us assume that the velocity field of a vortex takes the form
  $$\sigma( x) =\sigma_{H} (x) +\sigma_{3} (x)\, e_{3},\quad x\in \T^2, $$
where $\sigma_{H} (x)= (\sigma_1 (x), \sigma_2 (x))$ is a divergence free vector field on $\T^2 $; $\{\sigma_i\}_{i=1}^3$ are smooth periodic functions. For simplicity, we shall assume the vortex to be symmetric, namely,
  \begin{equation}\label{vortex-symmetry}
  \sigma_{H} (-x) = - \sigma_{H} (x), \quad \sigma_{3} (-x) = \sigma_{3} (x), \quad x\in \T^2.
  \end{equation}
As a typical example, we may take
  \begin{equation}\label{typical-example}
  \sigma_{H}= \Gamma K\ast \theta, \quad \sigma_3 = \gamma G\ast \chi,
  \end{equation}
where $\Gamma, \gamma>0$ are vortex intensities, $K$ is the Biot-Savart kernel and $G$ the Green function on $\T^2$, and $\theta, \chi\in C_c^\infty\big((-\frac12, \frac12)^2 \big)$ are symmetric functions.

\iffalse
\begin{align*}
\omega( \mathbf{x})& =\omega_{H} (\mathbf{x}_{H}) +\omega_{3} (\mathbf{x}_{H})\, \mathbf{e}_{3},
\quad \omega_{H} (\mathbf{x}_{H} )  \in\text{span} \{ \mathbf{e}_{1},\mathbf{e}_{2} \},  \\
\operatorname{div}\omega_{H} & =\operatorname{div}_{H} \omega_{H} =0, \\
\omega_{H} ( \mathbf{x}_{H} ) & =\gamma_\ell \ell^{-1} \theta^H ( \ell^{-1} \mathbf{x}_{H}) = \gamma_\ell \ell\, \theta^H_\ell (\mathbf{x}_{H} ),  \\
\omega_{3} ( \mathbf{x}_{H} ) & =\Gamma_\ell \ell^{-2} \theta( \ell^{-1} \mathbf{x}_{H} ) = \Gamma_\ell \theta_\ell(\mathbf{x}_{H} ),
\end{align*}
where $\ell\in (0,1]$ represent the size of vortex, $\Gamma_\ell, \gamma_\ell>0$ are intensities respectively in horizontal and vertical directions; $\theta^H$ is a 2D divergence free smooth vector field satisfying $\theta^H(-\mathbf{x}_{H} ) = - \theta^H( \mathbf{x}_{H} ) $, $\theta$ is a smooth and symmetric probability density function, both with compact supports in $(-\frac12, \frac12)^2$.
\fi

The associated vorticity field
  $$\omega(x) =\omega_{H} (x) +\omega_{3} (x)\, e_{3} $$
is determined by the following relations:
  $$\omega_{H} = \nabla_H^\perp \sigma_3, \quad \omega_3= \nabla_H^\perp\cdot \sigma_{H}. $$

Let $X$ be a uniform random variable with values in $\T^{2}$, we can now define the velocity field of the random 3D vortex as follows:
  $$\Sigma(x)= \sigma(x - X) = \sigma_{H}(x - X) + \sigma_{3} (x - X)\, e_{3}, \quad x\in \T^2 .$$

\begin{remark}
We describe here the ``intuitive picture'' of the vortex noise: choose a random location $X$ in the
horizontal torus $\mathbb{T}^{2}$, create a vertical 3D vortex object,
similar to a sort of ``vertical cylinder'', with a certain velocity component in the vertical direction.
Repeating this choice at high frequency, choosing at random the location $X$ and the orientation (namely, multiply $\omega(x)$ by $\pm 1$), we get a ``random walk'' of vortex structures that converges to a noise in a suitable scaling limit, see \cite{FH}.
\end{remark}

Consider the covariance matrix function of the random field $\Sigma$:%
\begin{align*}
Q ( x,y) & =\mathbb{E}[\Sigma (x)  \otimes \Sigma (y) ]
=\mathbb{E} [ \sigma ( x-X ) \otimes \sigma( y-X ) ] =\int_{\T^{2}} \sigma ( x-z ) \otimes \sigma (y-z)\, dz.
\end{align*}
It is space-homogeneous, that is, equal to $Q (x-y)$ with
\begin{equation}\label{eq:covariance-funct}
Q (a)  =\int_{\T^{2}} \sigma(a-w) \otimes\sigma(-w) \, dw,
\end{equation}
where we use the change of variables $y-z=-w$ in the integral above. As a matrix valued function, $Q$ is smooth on $\T^2$.

We decompose $Q(a)$ in the form%
\[
Q(a)  = \begin{pmatrix}
Q_{H}(a) & Q_{H,3}(a) \\
Q_{3,H}(a) & Q_{3}(a)
\end{pmatrix},
\]
where, using \eqref{vortex-symmetry},
\begin{equation}\label{covariance-functions}
\aligned
Q_{H}(a) &= -\int_{\mathbb{T}^{2}} \sigma_{H} (a-x) \otimes \sigma_{H} (x)\, dx, \\
Q_{3}(a) &=\int_{\mathbb{T}^{2}} \sigma_{3}(a-x) \otimes \sigma_{3}(x) \, d x,\\
Q_{H,3}(a) &=\int_{\mathbb{T}^{2}} \sigma_{H} (a-x) \otimes\sigma_{3} (x) \, dx ,\\
Q_{3,H} (a) &= -\int_{\mathbb{T}^{2}} \sigma_{3}(a-x) \otimes \sigma_{H} (x) \, dx.
\endaligned
\end{equation}

\begin{remark}
Recall that, by the definition of covariance,%
\[
Q (-a)  =Q ( a)^{\ast}%
\]
and that mirror symmetry means
\[
Q ( -a)  =Q (a),
\]
which also means that $Q (a)$ is a symmetric matrix.
Notice that the covariance matrix of a random vector is always positive definite but, in spite
of the name, $Q (a)  $ is not the covariance matrix
of a vector but the mutual covariance between two vectors, which may have any sign.
\end{remark}

We have, by \eqref{vortex-symmetry},
\begin{align*}
Q_{H} (-a) & = -\int_{\mathbb{T}^{2}} \sigma_{H} (-a-x) \otimes \sigma_{H} (x)\, dx =\int_{\mathbb{T}^{2}} \sigma_{H} (a+x) \otimes \sigma_{H} (x)\,  dx;
\end{align*}
changing variable $z= -x$ yields
\begin{align*}
Q_{H} (-a) & =\int_{\mathbb{T}^{2}} \sigma_{H} (a-z) \otimes \sigma_{H}( -z)\, dz =Q_{H} (a).
\end{align*}
Similarly,%
\begin{align*}
Q_{3} (-a) & =Q_{3} (a), \\
Q_{3,H}(-a) & = -Q_{3,H} (a), \\
Q_{H,3} (-a) &= -Q_{H,3} (a) ,
\end{align*}
hence \textit{mirror symmetry} does not hold.

\begin{remark}[Diagonality of $Q_H(0)$] \label{rem-diagonal-horizontal-covar}
Let the horizontal velocity field $\sigma_H$ be given as in \eqref{typical-example}; by the definition of horizontal covariance function $Q_H$, we have
  $$ \aligned
  Q_H(0) &= \int_{\T^2} \sigma_H(x) \otimes \sigma_H(x)\, dx= \Gamma^2 \int_{\T^2} (K\ast\theta)(x) \otimes (K\ast\theta)(x)\, dx \\
  &= \Gamma^2 \int_{\T^2} (\nabla_H^\perp G\ast\theta)(x) \otimes (\nabla_H^\perp G\ast\theta)(x)\, dx.
  \endaligned $$

If we assume that $\theta$ is \emph{radially symmetric}, then $Q_H(0) = 2\kappa I_2$ for some $\kappa>0$, here $I_2$ is the $2\times 2$ unit matrix. Indeed, we have
  $$Q_H^{1,1}(0)= \Gamma^2 \int_{\T^2} \big[(\partial_2 G\ast\theta)(x) \big]^2 \, dx, \quad Q_H^{2,2}(0)= \Gamma^2 \int_{\T^2} \big[(\partial_1 G\ast\theta)(x) \big]^2 \, dx. $$
Using the radial symmetry of $\theta$, one can easily show that
  $$(\partial_2 G\ast\theta)(x_2, x_1) = (\partial_1 G\ast\theta)(x_1, x_2) ,$$
therefore, changing variable $(x_1, x_2)\to (x_2, x_1)$, we have
  $$Q_H^{1,1}(0)= \Gamma^2 \int_{\T^2} \big[(\partial_2 G\ast\theta)(x_2, x_1) \big]^2 \, dx_2 dx_1 = \Gamma^2 \int_{\T^2} \big[(\partial_1 G\ast\theta)(x_1, x_2) \big]^2 \, dx_1 dx_2= Q_H^{2,2}(0).  $$
Next, for the off-diagonal entries,
  $$Q_H^{1,2}(0) = - \Gamma^2 \int_{\T^2} (\partial_2 G\ast\theta)(x) \,(\partial_1 G\ast\theta)(x) \, dx = Q_H^{2,1}(0). $$
Again by the radial symmetry of $\theta$, one can show that
  $$\aligned
  (\partial_1 G \ast \theta) (-x_1, x_2)= -(\partial_1 G \ast \theta) (x_1, x_2),  &\quad (\partial_1 G \ast \theta) (x_1, -x_2) = (\partial_1 G \ast \theta) (x_1, x_2), \\
  (\partial_2 G \ast \theta) (-x_1, x_2)= (\partial_2 G\ast \theta) (x_1, x_2),  &\quad (\partial_2 G\ast \theta) (x_1, -x_2) = -(\partial_2 G \ast \theta) (x_1, x_2).
  \endaligned $$
These properties immediately imply that $Q_H^{1,2}(0)= Q_H^{2,1}(0) =0$. Thus the assertion holds with $\kappa = \frac12 Q_H^{i,i}(0)= \frac14 {\rm Tr}(Q_H(0))$.
\end{remark}

\iffalse
From the above remark, we see that
  \begin{equation}\label{eq:trace-covariance}
  {\rm Tr}(Q_H(0))= \|\sigma_{H} \|_{L^2}^2= \frac{\Gamma^2}{4\pi^2} \sum_{k\ne 0} \frac{1}{|k|^2} \big| \hat{\theta}(k) \big|^2 = \frac{\Gamma^2}{4\pi^2} \|\theta \|_{\dot H^{-1}}^2,
  \end{equation}
where $\dot H^{s}$ means the homogeneous Sobolev space of order $s\in \R$.
\fi

\begin{remark}\label{rem-estimates-horizontal-covar}
Later on we will make a scaling of the vortex noise: for $\ell\in (0,1)$, let $\theta_\ell(x)= \ell^{-2} \theta(\ell^{-1} x)$ and define $\sigma_H^\ell = \Gamma K\ast \theta_\ell$. Then, as $\ell\to 0$, $\sigma_H$ approaches a point vortex. In this case, if $\theta$ is a probability density with compact support in $(-\frac12, \frac12)^2$, it is a classical result that its energy is of order $\log \ell^{-1}$; more precisely,
  \begin{equation}\label{eq:trace-lower-bound}
  {\rm Tr}(Q_H(0)) \sim \frac{\Gamma^2}{4\pi} \log \ell^{-1} \quad \mbox{as } \ell\to 0,
  \end{equation}
see Proposition \ref{prop:limit-matrix} and its proof in Section \ref{subsec-proof-prop}, for instance, Lemma \ref{lem-limit-integral-G}.
\end{remark}

Let $\mathbb Q_H$ (resp. $\mathbb Q_3$) be the covariance operator associated to the covariance matrix $Q_H$ (resp. covariance function $Q_3$) defined above. We conclude this part with estimates on the operator norms of $\mathbb Q_H $ and $ \mathbb Q_3$ when the vortex velocity field is given by \eqref{typical-example}. Recall that the covariance operators are defined as follows:
  $$\aligned
  (\mathbb Q_H V)(x) &= \int_{\T^2} Q_H(x-y)V(y)\, dy = (Q_H\ast V)(x) , \quad x\in \T^2,\, V\in L^2(\T^2,\R^2); \\
  (\mathbb Q_3 f)(x) &= \int_{\T^2} Q_3(x-y)f(y)\, dy = (Q_3\ast f)(x) , \quad x\in \T^2,\, f\in L^2(\T^2,\R).
  \endaligned $$

\begin{lemma}\label{lem-covar-operator}
Assume that the vortex velocity field is defined as in \eqref{typical-example}, then one has
  $$\|\mathbb Q_H \|_{L^2\to L^2} \le \Gamma^2 \| K \|_{L^1}^2 \| \theta \|_{L^1}^2 \quad \mbox{and} \quad \|\mathbb Q_3 \|_{L^2\to L^2} \le \gamma^2 \| G \|_{L^1}^2 \| \chi \|_{L^1}^2. $$
\end{lemma}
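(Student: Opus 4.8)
The plan is to diagonalize both convolution operators by Fourier series on $\T^2$ and thereby reduce each operator norm to a supremum over frequencies of an elementary quantity. Writing $\hat f(k)=\int_{\T^2} f(x)\,e^{-2\pi i k\cdot x}\,dx$ for $k\in\Z^2$, convolution on $\T^2$ becomes multiplication, so for $V\in L^2(\T^2,\R^2)$ Plancherel gives
\[
\|\mathbb Q_H V\|_{L^2}^2=\sum_{k\in\Z^2}\big|\hat Q_H(k)\,\hat V(k)\big|^2\le\Big(\sup_{k}\|\hat Q_H(k)\|_{\mathrm{op}}\Big)^2\|V\|_{L^2}^2,
\]
where $\hat Q_H(k)$ is the $2\times2$ matrix of Fourier coefficients of $Q_H$ and $\|\cdot\|_{\mathrm{op}}$ denotes the spectral norm; hence $\|\mathbb Q_H\|_{L^2\to L^2}\le\sup_k\|\hat Q_H(k)\|_{\mathrm{op}}$, and the scalar case gives analogously $\|\mathbb Q_3\|_{L^2\to L^2}\le\sup_k|\hat Q_3(k)|$.

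The main point is to identify $\hat Q_H(k)$ explicitly. From \eqref{covariance-functions}, each entry $Q_H^{ij}$ is, up to sign, the convolution of $\sigma_i$ with $\sigma_j$, so $\hat Q_H(k)=-\hat\sigma_H(k)\otimes\hat\sigma_H(k)$; since $\sigma_H$ is real and odd by \eqref{vortex-symmetry}, $\hat\sigma_H(k)$ is purely imaginary and this matrix equals the Hermitian rank-one matrix $\hat\sigma_H(k)\otimes\overline{\hat\sigma_H(k)}$, whose spectral norm is exactly $|\hat\sigma_H(k)|^2$ (and which, reassuringly, is nonnegative definite). Likewise $\hat Q_3(k)=|\hat\sigma_3(k)|^2$. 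Therefore $\|\mathbb Q_H\|_{L^2\to L^2}\le\sup_k|\hat\sigma_H(k)|^2$ and $\|\mathbb Q_3\|_{L^2\to L^2}\le\sup_k|\hat\sigma_3(k)|^2$.

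It then remains to bound the Fourier coefficients uniformly in $k$ by $L^1$ norms. Using $|\hat\sigma_H(k)|\le\|\sigma_H\|_{L^1}$ together with $\sigma_H=\Gamma K\ast\theta$ from \eqref{typical-example} and Young's inequality $\|K\ast\theta\|_{L^1}\le\|K\|_{L^1}\|\theta\|_{L^1}$ (applied to the Euclidean norm via $|K\ast\theta|\le|K|\ast|\theta|$) yields $|\hat\sigma_H(k)|\le\Gamma\|K\|_{L^1}\|\theta\|_{L^1}$, and squaring gives the first bound; the second follows identically from $\sigma_3=\gamma G\ast\chi$. The only genuine thing to verify for these bounds to be finite is that $K,G\in L^1(\T^2)$, which holds because in two dimensions the Green function $G$ has only a logarithmic singularity and the Biot--Savart kernel $K=\nabla_H^\perp G$ has a $|x|^{-1}$ singularity, both integrable over $\T^2$. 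Beyond this the argument is a routine Plancherel/Young estimate; the one place requiring care is the rank-one outer-product structure of $\hat Q_H(k)$, which makes its spectral norm collapse to $|\hat\sigma_H(k)|^2$ and reproduces the exact constant in the statement rather than a larger one.
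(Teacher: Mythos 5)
Your proof is correct, but it follows a genuinely different route from the paper's. The paper stays entirely in physical space and applies Young's convolution inequality twice: first $\|\mathbb Q_H V\|_{L^2}=\|Q_H\ast V\|_{L^2}\le \|Q_H\|_{L^1}\|V\|_{L^2}$, then $\|Q_H\|_{L^1}\le \|\sigma_H\|_{L^1}^2$ straight from the defining integral \eqref{covariance-functions}, and finally $\|\sigma_H\|_{L^1}=\Gamma\|K\ast\theta\|_{L^1}\le \Gamma\|K\|_{L^1}\|\theta\|_{L^1}$, which is exactly the last step you also perform. You instead diagonalize the convolution by Fourier series and exploit the rank-one structure $\hat Q_H(k)=-\hat\sigma_H(k)\otimes\hat\sigma_H(k)$, whose spectral norm is $|\hat\sigma_H(k)|^2$, before invoking the elementary bound $|\hat\sigma_H(k)|\le\|\sigma_H\|_{L^1}$; the two arguments thus meet at the common intermediate bound $\|\mathbb Q_H\|_{L^2\to L^2}\le\|\sigma_H\|_{L^1}^2$ and share the final Young step, hence produce identical constants. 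What your route buys: the multiplier reduction is actually an equality, $\|\mathbb Q_H\|_{L^2\to L^2}=\sup_k\|\hat Q_H(k)\|_{\mathrm{op}}=\sup_k|\hat\sigma_H(k)|^2$, which is sharper than passing through $\|Q_H\|_{L^1}$, and it exhibits $\mathbb Q_H$ and $\mathbb Q_3$ as nonnegative operators (each Fourier block is Hermitian positive semi-definite), as befits covariance operators; this exact spectral description could be reused, e.g., when checking Hypothesis \ref{basic-hypotheses}(b) for other noise scalings. What the paper's route buys: it is shorter, avoids vector-valued Fourier multipliers altogether, makes no use of the parity assumption \eqref{vortex-symmetry} (your identification of $\hat Q_H(k)$ as Hermitian rests on the oddness of $\sigma_H$, although the norm value $|\hat\sigma_H(k)|^2$ for a rank-one matrix holds even without it), and, being based on Young's inequality, would extend verbatim to $L^p\to L^p$ bounds, whereas Plancherel is tied to $L^2$. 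One presentational caveat: your finiteness remark about $K,G\in L^1(\T^2)$ is correct and worth keeping, since both your bound and the paper's are vacuous otherwise.
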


\begin{proof}
We only prove the first estimate since the other proof is similar. By the classical inequality for convolution, one has
  $$\|\mathbb Q_H V \|_{L^2} = \|Q_H\ast V \|_{L^2} \le \|Q_H \|_{L^1} \| V \|_{L^2} $$
which implies $\|\mathbb Q_H \|_{L^2\to L^2} \le \|Q_H \|_{L^1}$.

Recalling the definition of $Q_H$, we have
  $$\aligned
  \|Q_H \|_{L^1} &= \int_{\T^2} \bigg|\int_{\mathbb{T}^{2}} \sigma_H(x-y)  \otimes \sigma_H
  (y)\, dy \bigg|\, dx \\
  &\le \int_{\T^2} |\sigma_H (y)|\, dy \int_{\mathbb{T}^{2}} | \sigma_H(x-y) |\, dx \\
  &= \|\sigma_H \|_{L^1}^2.
  \endaligned $$
Note that $\sigma_H= \Gamma K\ast \theta$, we use again the convolutional inequality and get
  $$\|Q_H \|_{L^1} \le \Gamma^2 \| K\ast \theta\|_{L^1}^2 \le \Gamma^2 \| K \|_{L^1}^2\| \theta \|_{L^1}^2. $$
Combining the above estimates we finish the proof.
\end{proof}

\subsection{Series expansion of noise} \label{subsec-series-noise}

The smoothness of $Q$ implies that the corresponding covariance operator $\mathbb{Q}$ is of trace class. Let $\{f_k\}_k$ be a CONS of $\mathcal H=L^2_{\rm sol} (\T^3, \R^3)$ (sol means solenoidal fields) consisting of eigenvectors of $\mathbb{Q}$:
  $$\mathbb{Q} f_k = \lambda_k f_k, \quad \lambda_k \ge 0, \quad k\ge 1. $$
For simplicity, we assume $\lambda_k >0$ for all $k\ge 1$. From the above identity we deduce that the field $f_k$ depends only on the horizontal variable $x=(x_1, x_2)$. Indeed, for any $\tilde x=(x, x_3)\in \T^3$, by the definition of $\mathbb{Q}$,
  $$\aligned
  \lambda_k f_k(\tilde x)&= \int_{\T^3} Q(\tilde x-\tilde y) f_k(\tilde y) \, d \tilde y= \int_{\T^3} Q(x -y) f_k(\tilde y) \, d \tilde y \\
  &= \int_{\T^2} Q(x -y)\bigg[ \int_{\T} f_k(y,y_3) \, d y_3 \bigg] dy,
  \endaligned $$
where $y$ is the horizontal part of $\tilde y\in \T^3$. The right-hand side depends only on $x$, thus $f_k(\tilde x) = f_k(x, x_3)$ is independent of $x_3$; this implies $\int_{\T} f_k(y,y_3) \, d y_3 = f_k(y)$. The above equality can be rewritten as
  $$\lambda_k f_k(x)= \int_{\T^2} Q(x -y) f_k(y) \, d y. $$
The property of convolution implies that $f_k$ is also smooth on $\T^2$.

Now let $W=\{W(t) \}_{t\ge 0}$ be a $\mathbb{Q}$-Brownian motion on $\mathcal H$; then there exists a sequence of independent real Brownian motions $\{W^k\}_{k\ge 1}$ such that
  $$W(t)= \sum_k \sqrt{\lambda_k}\, W^k_t f_k = \sum_k \sigma_k W^k_t, $$
where we have denoted for simplicity that $\sigma_k= \sqrt{\lambda_k} f_k,\, k\ge 1$. We remark that
  \begin{equation}\label{covariance-matrix-expansion}
  Q(x -y) = \sum_k \sigma_k(x)\otimes \sigma_k(y)\quad \mbox{for all } x, y\in \T^2,
  \end{equation}
and, since $Q$ is smooth, the series converges in $C^m(\T^2\times \T^2)$ for any $m\ge 1$.
Similarly, we write the field $\sigma_k$ as
  $$\sigma_k(x) = \sigma_k^H(x) + \sigma_k^3(x)\, e_3 ,$$
where the horizontal part $\sigma_k^H = (\sigma_k^1, \sigma_k^2)$ is a divergence free field on $\T^2$. Accordingly, we can write the noise $W(t)$ as
  $$\aligned
  W(t,x)&= W_H(t,x) + W_3(t,x)\, e_3 , \\
  W_H(t,x)&= \sum_k \sigma_k^H(x) W^k_t, \quad W_3(t,x)= \sum_k \sigma_k^3(x) W^k_t.
  \endaligned $$

\section{Main results}

In this part we first show in Section \ref{subsec-well-posedness} the well-posedness of the stochastic 2D-3C models perturbed by the vortex noise introduced above, then in Section \ref{subsec-scaling} we shall rescale the noise and show that the stochastic models converge to deterministic 2D-3C model which might exhibit AKA (short for anisotropic kinetic alpha) effect.

In the sequel, since the space variables $x$ are always two dimensional, we omit for simplicity the subscript $H$ in $\Delta_H,\, \nabla_H,\, \nabla_H^\perp$ etc, but we still write $v_H,\, \omega_H$ to distinguish them from $v_3,\, \omega_3$. We denote the usual Sobolev spaces on $\T^2$ as $H^s= H^s(\T^2)$, which will also be used for vector valued functions. When we want to specify the target space, we write more explicitly as $H^s(\T^2,\R^d)$.  The norm in $H^s$ is written as $\|\cdot \|_{H^s}$ and in case $s=0$, we write $H^0$ as $L^2$ with the norm $\|\cdot \|_{L^2}$. We assume the spaces $H^s$ consist of functions with zero spatial mean on $\T^2$. The notation $\<\cdot, \cdot\>$ will be used for the scalar product in $L^2$ or the duality between $H^s$ and $H^{-s}$. Sometimes we denote the norm in $L^p(0,T; H^s)$ simply as $\|\cdot \|_{L^p H^s}$, $p\ge 1, s\in \R$. In the sequel, we write $a\lesssim b$ if there exists some unimportant constant $C>0$ such that $a\le Cb$; the notation $\lesssim_{\nu, T}$ means that the constant $C$ is dependent on $\nu,\, T$.

\subsection{Well-posedness of stochastic 2D-3C models} \label{subsec-well-posedness}

Let $(\Omega,\mathcal F, (\mathcal F_t), \P)$ be a filtered probability space and $\{W^k\}_k$ a family of independent standard Brownian motions. Recall the vector fields $\{\sigma_k^H\}_k$ and functions $\{\sigma_k^3\}_k$ introduced in Section \ref{subsec-series-noise}. Motivated by the discussions in Section \ref{2D 3C reduction}, we consider the following stochastic 2D-3C model:
  \begin{equation}\label{eq:stoch-2D-3C-model}
  \left\{ \aligned
  d\omega_{3} + (v_{H} \cdot \nabla\omega_{3} - \nu\Delta \omega_3)\, dt &= - \sum_k \big(\sigma_k^H \cdot \nabla \omega_{3} - \omega_{H}\cdot \nabla \sigma_k^3 \big)\circ dW^k_t, \\
  dv_3 + (v_{H} \cdot \nabla v_3 - \nu\Delta v_3)\, dt &= - \sum_k \sigma_k^H \cdot\nabla v_{3} \circ dW^k_t.
  \endaligned  \right.
  \end{equation}
where $\circ d$ means Stratonovich stochastic differential. We recall that
  $$\omega_3= \nabla^\perp\cdot v_H, \quad \omega_H= \nabla^\perp v_3; $$
the latter identity explains why the stretching term $\omega_H \cdot \nabla v_3$ vanishes in the first equation; however, the random stretching terms $\sum_k \omega_{H}\cdot \nabla \sigma_k^3 \circ dW^k_t$ remain and, depending on certain condition, they might be the origin of the AKA term. Moreover, by the Biot-Savart law, $v_{H}= K\ast \omega_3$ where $K$ is the Biot-Savart kernel on $\T^2$. In It\^o formulation, the system becomes (see Section \ref{subsec-Ito-corrector} for the derivations)
  \begin{equation}\label{eq:stoch-2D-3C-model-Ito}
  \left\{ \aligned
  d\omega_{3} + (v_{H} \cdot \nabla\omega_{3} - \nu\Delta \omega_3)\, dt
  & = - \sum_k \big(\sigma_k^H \cdot \nabla \omega_{3} -\omega_{H}\cdot \nabla \sigma_k^3 \big)\, dW^k_t\\
  &\quad + \big(\mathcal L \omega_3 + \nabla \cdot [\nabla Q_{H,3}(0)\, \omega_{H}] \big)\, dt, \\
  dv_3 + (v_{H} \cdot \nabla v_3 - \nu\Delta v_3)\, dt &= - \sum_k \sigma_k^H \cdot\nabla v_{3} \, dW^k_t + \mathcal L v_3\, dt,
  \endaligned  \right.
  \end{equation}
where $\nabla Q_{H,3}(0)$ is a constant matrix given by
  \begin{equation}\label{eq:nabla-Q-H-3}
  \nabla Q_{H,3}(0) = \begin{pmatrix}
  \partial_1 Q_{1,3}(0) &\, \partial_2 Q_{1,3}(0) \\
  \partial_1 Q_{2,3}(0) &\, \partial_2 Q_{2,3}(0)
  \end{pmatrix}
  \end{equation}
and $\mathcal L$ is a second order differential operator given by
  $$
  \mathcal L f = \frac12 \sum_k \sigma_k^H \cdot \nabla (\sigma_k^H \cdot \nabla f), \quad f\in C^2(\T^2) .
  $$
As the vector fields $\{\sigma_k^H \}_k$ are divergence free, we have
  \begin{equation}\label{eq:corrector-L}
  \mathcal L f = \frac12 \sum_k \nabla \cdot \big[(\sigma_k^H \cdot \nabla f) \sigma_k^H\big] = \frac12 \sum_k \nabla \cdot \big[(\sigma_k^H \otimes \sigma_k^H) \nabla f \big]= \frac12 \nabla \cdot \big[Q_H(0) \nabla f \big],
  \end{equation}
where the last step is due to horizontal part of \eqref{covariance-matrix-expansion}.

We first give the meaning of solutions to the system \eqref{eq:stoch-2D-3C-model-Ito}.

\begin{definition}\label{sect-5-definition}
Let $(\omega_3^0, v_3^0)\in (L^2)^2$. A pair of stochastic processes $(\omega_3, v_3)$ defined on some stochastic basis $(\Omega,\mathcal F, (\mathcal F_t), \P)$ with $(\mathcal F_t)$-Brownian motions $\{W^k\}_k$, which are $(\mathcal F_t)$-predictable and have trajectories in
  $$L^\infty(0,T; L^2) \cap L^2(0,T; H^1), $$
is called a weak solution to \eqref{eq:stoch-2D-3C-model-Ito} if for any $\phi\in H^1(\T^2)$, $\P$-a.s. for all $t\in [0,T]$, one has
  \begin{equation} \label{eq:defin-1}
  \aligned
  \<\omega_3(t),\phi\> &= \<\omega_3^0,\phi\> + \int_0^t \<\omega_3(s), v_{H}(s)\cdot\nabla \phi \>\, ds -\int_0^t \big\<\omega_{H}(s), (\nabla Q_{H,3}(0))^\ast \nabla\phi \big\> \, ds  \\
  &\quad - \int_0^t \Big\<\nabla\omega_3(s), \Big(\nu I_2+ \frac12 Q_H(0)\Big) \nabla \phi \Big\>\, ds \\
  &\quad + \sum_k \int_0^t \big[ \big\<\omega_3(s), \sigma_k^H \cdot \nabla\phi \big\> - \big\<\sigma_k^3, \omega_{H}(s)\cdot \nabla \phi\big\> \big]\, dW^k_s ,
  \endaligned
  \end{equation}
  \begin{equation} \label{eq:defin-2}
  \aligned
  \<v_3(t),\phi\> &= \<v_3^0,\phi\> + \int_0^t \<v_3(s), v_{H}(s)\cdot\nabla \phi\>\, ds + \sum_k \int_0^t \big\<v_3(s), \sigma_k^H \cdot \nabla\phi \big\>\, dW^k_s\\
  &\quad -\int_0^t \Big\<\nabla v_3(s), \Big(\nu I_2 + \frac12 Q_H(0)\Big) \nabla\phi \Big\>\, ds.
  \endaligned
  \end{equation}
\end{definition}

We remark that if $\omega_3, v_3\in L^2\big(\Omega, L^\infty(0,T; L^2) \cap L^2(0,T; H^1)\big)$, then all the terms in \eqref{eq:defin-1} and \eqref{eq:defin-2} are well defined. According to the above definition, any solution $(\omega_3, v_3)$ can be decomposed as
  $$\omega_3(t)= \omega_3^0 + V_\omega(t) + M_\omega(t) , \quad v_3(t)= v_3^0 + V_v(t) + M_v(t), $$
where $V_\omega, \, V_v$ take values in $W^{1,2}(0,T; H^{-1})$, while $M_\omega,\, M_v$ are $L^2$-valued continuous local martingales. For instance, one has
  $$\aligned
  V_\omega(t)&= \int_0^t \big( -v_{H} \cdot \nabla\omega_{3} + \nu\Delta \omega_3+ \mathcal L \omega_3 + \nabla \cdot [\nabla Q_{H,3}(0)\, \omega_{H}] \big)(s)\, ds,\\
  M_\omega(t)&= - \sum_k \int_0^t \big(\sigma_k^H \cdot \nabla \omega_{3} -\omega_{H}\cdot \nabla \sigma_k^3 \big)(s)\, dW^k_s;
  \endaligned $$
similar (and simpler) expressions hold for $V_v(t)$ and $M_v(t)$. Using the regularity properties of $\omega_3$ and $v_H= K\ast \omega_3$, it is easy to see that
  $$\aligned
  \| v_H(s) \cdot \nabla\omega_3(s) \|_{H^{-1}} &= \| \nabla \cdot (v_H(s) \omega_3(s)) \|_{H^{-1}} \lesssim \| v_H(s) \omega_3(s) \|_{L^2} \\
  &\lesssim \|\omega_3(s) \|_{L^2} \|v_H(s) \|_{H^2} \lesssim \|\omega_3 \|_{L^\infty L^2} \|\omega_3(s) \|_{H^1},
  \endaligned $$
and thus $\P$-a.s. $v_H \cdot \nabla\omega_3 \in L^2(0,T; H^{-1})$; the other terms in the expression of $V_\omega$ clearly enjoy the same property, therefore $\partial_t V_\omega \in L^2(0,T; H^{-1})$. The regularity on the martingale part $M_\omega$ can also be verified by classical estimates, cf. \cite[Lemma 3.2]{FGL21b} for detailed computations. From these discussions we see that the conditions in  \cite[Theorem 2.13]{RL18} are verified, thus the It\^o formula therein is applicable which implies that $t\mapsto \|\omega_3(t) \|_{L^2}$ is continuous. We conclude from this and the continuity of $\<\omega_3(t),\phi\>$ in $t$ that $\omega_3$ has continuous trajectories in $L^2$. The same result holds for $v_3$.

The purpose of this part is to show the well-posedness of the system \eqref{eq:stoch-2D-3C-model-Ito}. Let $\|\cdot \|_M$ be a norm on the space of $2\times 2$ matrices.

\begin{theorem}\label{thm:well-posedness}
Let $Q$ be the smooth covariance matrix function as defined in Section \ref{subs-covariance-funct}. For any $(\omega_3^0, v_3^0)\in (L^2)^2$, the stochastic 2D-3C model \eqref{eq:stoch-2D-3C-model-Ito} admits a probabilistically weak solution $(\omega_3, v_3)$ in the sense of Definition \ref{sect-5-definition}, satisfying the bounds
  \begin{align}
  & \P \mbox{-a.s. for all } t\in[0,T], \quad \|v_3(t)\|_{L^2}^2 + 2\nu \int_0^t \|\nabla v_3(s)\|_{L^2}^2\, ds \le \|v_3^0 \|_{L^2}^2, \label{thm:well-posedness.1} \\
  & \E\bigg[\sup_{t\in [0,T]} \|\omega_3(t)\|_{L^2}^2 + \nu \int_0^T \|\nabla \omega_3(s)\|_{L^2}^2\, ds \bigg] \le C_{\nu, Q} \big(\|v_3^0 \|_{L^2}^2 + \|\omega_3^0 \|_{L^2}^2\big), \label{thm:well-posedness.2}
  \end{align}
where $C_{\nu, Q}$ is a constant depending on $\nu, \|\nabla Q_{H,3}(0)\|_M, \|\nabla^2 Q_3(0)\|_M$.

Moreover, if $\|\nabla^2 Q_3(0)\|_M \le 2\nu$, then pathwise uniqueness holds for \eqref{eq:stoch-2D-3C-model-Ito} and thus the system has a unique probabilistically strong solution.
\end{theorem}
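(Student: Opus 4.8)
The plan is to build a probabilistically weak solution by a spectral Galerkin approximation, establish the bounds \eqref{thm:well-posedness.1}--\eqref{thm:well-posedness.2} as uniform a priori estimates, pass to the limit by compactness, and finally prove pathwise uniqueness under the smallness condition and invoke the Yamada--Watanabe theorem to obtain the unique strong solution.

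\textbf{A priori estimates.} The crucial point is that the $v_3$-equation admits a \emph{pathwise} energy balance. In Stratonovich form the transport terms $v_H\cdot\nabla v_3$ and $\sigma_k^H\cdot\nabla v_3$ are skew-symmetric in $L^2$ (since $v_H$ and each $\sigma_k^H$ are divergence free), so the Stratonovich chain rule applied to $\|v_3\|_{L^2}^2$ leaves only $2\langle v_3,\nu\Delta v_3\rangle$, yielding \eqref{thm:well-posedness.1}; equivalently, in It\^o form the corrector $\mathcal L v_3=\frac12\nabla\cdot[Q_H(0)\nabla v_3]$ exactly cancels the quadratic variation of the transport noise and $Q_H(0)\ge0$ leaves the clean dissipation $-2\nu\|\nabla v_3\|_{L^2}^2$. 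For $\omega_3$ I apply It\^o's formula to $\|\omega_3\|_{L^2}^2$: the nonlinearity $v_H\cdot\nabla\omega_3$ and the transport part $\sigma_k^H\cdot\nabla\omega_3$ of the noise drop out, the quadratic-variation piece $\sum_k\|\sigma_k^H\cdot\nabla\omega_3\|_{L^2}^2=\langle\nabla\omega_3,Q_H(0)\nabla\omega_3\rangle$ cancels $2\langle\omega_3,\mathcal L\omega_3\rangle$, and---using $\sum_k\sigma_k^H\,\partial_j\sigma_k^3=-\partial_j Q_{H,3}(0)$ from \eqref{covariance-matrix-expansion}---the cross term $\sum_k\langle\sigma_k^H\cdot\nabla\omega_3,\omega_H\cdot\nabla\sigma_k^3\rangle$ cancels exactly against the It\^o drift $\langle\omega_3,\nabla\cdot[\nabla Q_{H,3}(0)\omega_H]\rangle$. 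What survives is
\[
d\|\omega_3\|_{L^2}^2=\big(-2\nu\|\nabla\omega_3\|_{L^2}^2+\textstyle\sum_k\|\omega_H\cdot\nabla\sigma_k^3\|_{L^2}^2\big)\,dt+dM_t,
\]
and since $\sum_k\|\omega_H\cdot\nabla\sigma_k^3\|_{L^2}^2\le\|\nabla^2Q_3(0)\|_M\|\omega_H\|_{L^2}^2=\|\nabla^2Q_3(0)\|_M\|\nabla v_3\|_{L^2}^2$, integrating in time and using \eqref{thm:well-posedness.1} to bound $\int_0^T\|\nabla v_3\|_{L^2}^2\,ds\le\|v_3^0\|_{L^2}^2/(2\nu)$ controls the extra term; a Burkholder--Davis--Gundy estimate on $M$ then gives the $\sup_t$-bound \eqref{thm:well-posedness.2}, with $C_{\nu,Q}$ depending on $\nu$ and $\|\nabla^2Q_3(0)\|_M$ (the dependence on $\|\nabla Q_{H,3}(0)\|_M$ entering through the time-regularity estimates below). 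Note this step does \emph{not} require the smallness condition, so existence holds unconditionally.

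\textbf{Existence.} I run these estimates on a Galerkin truncation $(\omega_3^N,v_3^N)$, whose truncated system is a locally Lipschitz finite-dimensional SDE with global solution guaranteed by the uniform $L^2$-bounds. These bounds are uniform in $N$; combined with fractional-in-time control of the drift (in $H^{-1}$) and of the martingale part (via BDG), the Aubin--Lions--Simon criterion gives tightness of the laws in $L^2(0,T;L^2)\cap C([0,T];H^{-\delta})$. Skorokhod's (or Jakubowski's) representation theorem furnishes a.s.-convergent copies on a new probability space; the strong $L^2(0,T;L^2)$ convergence passes the nonlinearity $v_H^N\omega_3^N$ to the limit, while a standard stochastic-integral convergence lemma identifies the martingale terms, producing a weak solution satisfying \eqref{eq:defin-1}--\eqref{eq:defin-2} and the bounds.

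\textbf{Uniqueness.} For two solutions driven by the same noise I set $\xi=\omega_3-\omega_3'$, $\eta=v_3-v_3'$, so $v_H-v_H'=K*\xi$ and $\omega_H-\omega_H'=\nabla^\perp\eta$, and estimate a combined energy. All the cancellations above persist; the 2D Navier--Stokes transport differences $(K*\xi)\cdot\nabla\omega_3'$ and $(K*\xi)\cdot\nabla v_3'$ are the classical quadratic nonlinearities, controlled by Ladyzhenskaya's and Young's inequalities against the viscous dissipation and producing a Gronwall factor in $L^1_t$ built from $\|\nabla\omega_3'\|_{L^2}^2$ and $\|\nabla v_3'\|_{L^2}^2$. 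The one genuinely new term is the stochastic-stretching quadratic variation $\sum_k\|\nabla^\perp\eta\cdot\nabla\sigma_k^3\|_{L^2}^2\le\|\nabla^2Q_3(0)\|_M\|\nabla\eta\|_{L^2}^2$ appearing in the $\xi$-balance; this is precisely where the hypothesis $\|\nabla^2Q_3(0)\|_M\le2\nu$ is used, to absorb it into the dissipation $2\nu\|\nabla\eta\|_{L^2}^2$ from the $\eta$-equation. Gronwall then forces $\xi\equiv0$, $\eta\equiv0$, giving pathwise uniqueness, and Yamada--Watanabe upgrades the weak solution to the unique probabilistically strong one. I expect the main obstacle to be exactly this uniqueness step: balancing the 2D Navier--Stokes nonlinear term in the vorticity variable against the stochastic stretching while keeping the stretching quadratic variation dominated requires choosing the right combined norm (possibly handling $\xi$ at the velocity level, $\|K*\xi\|_{L^2}=\|\xi\|_{H^{-1}}$) so that the Ladyzhenskaya estimates and the threshold $\|\nabla^2Q_3(0)\|_M\le2\nu$ close simultaneously.
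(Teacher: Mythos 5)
Your proposal is correct and follows the same overall route as the paper: the pathwise energy identity for $v_3$, an It\^o energy estimate for $\omega_3$ closed by the $v_3$ bound and a BDG argument, Galerkin--compactness--Skorokhod for existence, and a combined energy estimate plus Yamada--Watanabe for uniqueness under $\|\nabla^2 Q_3(0)\|_M\le 2\nu$. The one genuine difference is tactical but worth noting: you run the $\omega_3$ estimate directly on the It\^o system \eqref{eq:stoch-2D-3C-model-Ito} and observe that the quadratic-variation cross term $-2\sum_k\<\sigma_k^H\cdot\nabla\omega_3,\,\omega_H\cdot\nabla\sigma_k^3\>$ cancels exactly against the drift contribution $2\<\omega_3,\nabla\cdot[\nabla Q_{H,3}(0)\,\omega_H]\>$ via the identity \eqref{eq:matrix-identity-1}, leaving precisely $-2\nu\|\nabla\omega_3\|_{L^2}^2-\<\omega_H,\nabla^2Q_3(0)\,\omega_H\>$ as the drift; the paper's Lemma \ref{lem-a-priori-1} instead works from the Stratonovich form and bounds the two cross terms in \eqref{eq:quadratic-varia} by Cauchy--Schwarz, which costs half the dissipation and makes the energy constant depend on $\|\nabla Q_{H,3}(0)\|_M$. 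Your computation is sharper (the bound \eqref{thm:well-posedness.2} then needs only $\nu$ and $\|\nabla^2Q_3(0)\|_M$) and is legitimate: the paper itself performs exactly this cancellation, but only later, in the uniqueness proof (the reduction to \eqref{eq:Ito-omega}). Two standard points you gloss over, neither a real gap: the time-increment estimates needed for tightness require a fourth-moment bound on $\|\omega_3\|_{L^2}$ to handle the quadratic nonlinearity (the paper's Lemma \ref{lem-a-priori-fourth-order}), and your final Gronwall step must be a stochastic Gronwall argument, since the coefficient $\|\nabla\tilde\omega_3\|_{L^2}^2+\|\nabla\tilde v_3\|_{L^2}^2$ is random and a martingale remainder is present---the paper resolves this by multiplying with the decreasing weight $\rho_t$ and verifying that the resulting stochastic integral is a true martingale.
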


This result will be proved in Section \ref{sec-proof-well-posedness}. The existence part follows from the classical Galerkin approximations and compactness arguments, therefore, we only provide some a priori estimates needed for showing tightness of laws of approximating solutions. Concerning the proof of pathwise uniqueness, the discussions below Definition \ref{sect-5-definition} allow us to apply the It\^o formula in \cite[Theorem 2.13]{RL18}, then the uniqueness follows from some relatively standard arguments.

\subsection{Scaling limit for stochastic 2D-3C models} \label{subsec-scaling}

In this part, following the recent works \cite{FGL21a, Galeati20, Luo21a}, we want to take a suitable scaling limit of the noise $W$ in \eqref{eq:stoch-2D-3C-model}, and show that the solutions converge to some limit which solves a deterministic 2D-3C model with an extra dissipation term, and possibly also a first order term responsible for the AKA effect. The latter has been discussed in physics literature for a long time (see e.g. \cite{Wirth}), but escaped from rigorous mathematical treatment so far.

To this purpose, we shall take a family of vortex noise, defined as in Section \ref{subs-covariance-funct}, where the velocity field now depends on some parameter $\ell\in (0,1)$:
  $$\sigma^\ell(x)= \sigma_H^\ell(x) + \sigma_3^\ell(x)\, e_3 , \quad x\in \T^2,$$
satisfying the symmetry property \eqref{vortex-symmetry}; accordingly, we have the corresponding covariance matrix function
  $$Q^\ell(x) = \int_{\T^2} \sigma^\ell(x -y) \otimes \sigma^\ell(-y)\,d y = \begin{pmatrix}
  Q_H^\ell(x) &\, Q_{H,3}^\ell(x) \smallskip \\
  Q_{3,H}^\ell(x) &\, Q_3^\ell(x)
  \end{pmatrix}, \quad x\in \T^2, $$
where $Q_H^\ell, \, Q_{H,3}^\ell$ and $Q_3^\ell$ admit similar expressions as in \eqref{covariance-functions}. We can define the covariance operator $\mathbb Q^\ell$ associated to $Q^\ell$, acting on functions in $L^2(\T^2,\R^3)$ and with operator norm $\|\mathbb  Q^\ell\|_{L^2\to L^2}$; in the same way, we have the operators $\mathbb  Q_H^\ell$ and $\mathbb  Q_3^\ell$ with associated norms. Moreover, we denote
  $$\nabla Q_{H,3}^\ell(0) = \begin{pmatrix}
  \partial_1 Q_{1,3}^\ell(0) &\ \partial_2 Q_{1,3}^\ell(0) \smallskip \\
  \partial_1 Q_{2,3}^\ell(0) &\ \partial_2 Q_{2,3}^\ell(0)
  \end{pmatrix}. $$

Our basic assumptions on the covariance functions are as follows:
\begin{hypothesis} \label{basic-hypotheses}
\begin{itemize}
\item[(a)] the limit $\bar Q:= \lim_{\ell\to 0} Q_H^\ell(0)$ exists and is a nonnegative definite $2\times 2$ matrix;
\item[(b)] it holds that $\lim_{\ell\to 0} \|\mathbb  Q_H^\ell\|_{L^2\to L^2} \vee \|\mathbb  Q_3^\ell\|_{L^2\to L^2} =0$;
\item[(c)] the limit $A:= \lim_{\ell\to 0} \nabla Q_{H,3}^\ell(0)$ exists (might be a zero matrix);
\item[(d)] it holds that $\sup_{\ell\in (0,1)} \|\nabla^2 Q_3^\ell(0) \|_M <+\infty$.
\end{itemize}
\end{hypothesis}

Let $W^\ell= \{W^\ell(t)\}_{t\ge 0}$ be a $\mathbb Q^\ell$-Brownian motion in $\mathcal H$; similarly to Section \ref{subsec-series-noise}, we decompose $W^\ell$ as follows:
  $$\aligned
  W^\ell(t,x)&= W^\ell_H(t,x) + W^\ell_3(t,x)\, e_3 , \\
  W^\ell_H(t,x)&= \sum_k \sigma^{\ell,H}_k (x) W^k_t, \quad W^\ell_3(t,x)= \sum_k \sigma^{\ell,3}_k(x) W^k_t,
  \endaligned $$
where $\sigma^{\ell,H}_k= \big( \sigma^{\ell,1}_k, \sigma^{\ell,2}_k\big)$. Now we consider the stochastic 2D-3C model driven by $W^\ell$:
  $$\left\{ \aligned
  d\omega^\ell_{3} +\big(v^\ell_H \cdot \nabla \omega^\ell_{3}-\nu\Delta \omega^\ell_3\big)\, dt &= -\sum_k \big(\sigma^{\ell,H}_k \cdot \nabla \omega^\ell_{3} -\omega^\ell_H \cdot \nabla \sigma^{\ell,3}_k \big)\circ dW^k_t, \\
  dv^\ell_3+ \big(v^\ell_H \cdot \nabla v^\ell_3 -\nu\Delta v^\ell_3 \big)\, dt &= -\sum_k \sigma^{\ell,H}_k \cdot\nabla v^\ell_{3} \circ dW^k_t.
  \endaligned \right. $$
where $\omega^\ell_H= \nabla^{\perp} v^\ell_{3}$ and $\omega^\ell_{3}= \nabla^{\perp}\cdot v^\ell_H$. The associated It\^o formulation is
  \begin{equation}\label{eq:Ito-system-ell}
  \left\{ \aligned
  d\omega^\ell_{3} + v^\ell_H \cdot \nabla \omega^\ell_{3} \, dt &= -\sum_k \big(\sigma^{\ell,H}_k \cdot \nabla \omega^\ell_{3} - \omega^\ell_H\cdot \nabla \sigma^{\ell,3}_k \big)\, dW^k_t \\
  &\quad + \big(\nu \Delta +\mathcal L^\ell\big)\omega^\ell_3 \, dt + \nabla\cdot \big[\nabla Q_{H,3}^{\ell} (0)\, \omega^\ell_H \big] \, dt,\\
  dv^\ell_3+ v^\ell_H \cdot \nabla v^\ell_3 \, dt &= -\sum_k \sigma^{\ell,H}_k \cdot\nabla v^\ell_{3} \, dW^k_t + \big(\nu \Delta +\mathcal L^\ell \big) \omega^\ell_3 \, dt ,
  \endaligned \right.
  \end{equation}
where the operator $\mathcal L^\ell f = \frac12 \nabla\cdot \big[Q_H^\ell(0) \nabla f \big]$. Given an $\ell$-independent initial data $(\omega_3(0), v_3(0))$, under Hypothesis \ref{basic-hypotheses}, Theorem \ref{thm:well-posedness} asserts that the system has probabilistically weak solutions $\big(\omega^\ell_3, v^\ell_3\big)$, satisfying bounds as \eqref{thm:well-posedness.1} and \eqref{thm:well-posedness.2} which are uniform in $\ell\in (0,1)$. We remark that if we strengthen condition (d) in Hypothesis \ref{basic-hypotheses} to be $\sup_{\ell\in (0,1)} \|\nabla^2 Q_3^\ell(0) \|_M \le 2\nu$, then the solutions $\big(\omega^\ell_3, v^\ell_3\big)$ are also strong in the probabilistic sense.

Our main result reads as follows:

\begin{theorem}\label{thm:Boussinesq-general-result}
Assume Hypothesis \ref{basic-hypotheses}; for any $\ell\in (0,1)$, let $\big(\omega^\ell_3, v^\ell_3\big)$ be a probabilistic weak solution to \eqref{eq:Ito-system-ell} with the same initial data $(\omega_3(0), v_3(0))$. Then the family $\{\eta^\ell \}_{\ell\in (0,1)}$ of laws of $\big\{ \big(\omega^\ell_3, v^\ell_3\big) \big\}_{\ell\in (0,1)}$ is tight on $ L^2(0,T; L^2)$, and any weakly convergent subsequence of $\{\eta^\ell \}_{\ell\in (0,1)}$ converges to a limit probability measure which is supported on the weak solution $(\omega_3, v_3)$ of the deterministic 2D-3C model:
  \begin{equation}\label{eq:2D-3C-model-general-limit}
  \left\{ \aligned
  \partial_t\omega_{3} + v_{H} \cdot \nabla \omega_{3} &= \big(\nu \Delta+\mathcal L_{\bar Q} \big)\omega_3 + \nabla\cdot (A\, \omega_{H}),\\
  \partial_t v_3+ v_{H} \cdot \nabla v_3 &= \big(\nu \Delta+\mathcal L_{\bar Q} \big) v_3,
  \endaligned \right.
  \end{equation}
where $\mathcal L_{\bar Q} f = \frac12 \nabla\cdot \big[\bar Q\, \nabla f \big]$ and, as usual, $\omega_3= \nabla^\perp \cdot v_{H}$ and $\omega_{H}= \nabla^\perp v_3$.

Moreover, if the matrices $\bar Q$ and $A$ satisfy
  \begin{equation}\label{eq:cond-unique-limit}
  x\cdot \bar Q x + y\cdot \bar Q y + 2 x\cdot A y^\perp \ge 0 \quad \mbox{for any } x, y\in \R^2,
  \end{equation}
then the system \eqref{eq:2D-3C-model-general-limit} admits a unique weak solution $(\omega_3, v_3)$ in the class $L^\infty(0,T; L^2) \cap L^2(0,T; H^1)$; in this case, the whole family $\big\{ \big(\omega^\ell_3, v^\ell_3\big) \big\}_{\ell\in (0,1)}$ converges in law, in the topology of $ L^2(0,T; L^2)$, to $(\omega_3, v_3)$.
\end{theorem}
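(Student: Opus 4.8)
The plan is to run a stochastic compactness (tightness $+$ Skorokhod) argument to extract a limit, and then a deterministic uniqueness statement for the limit system to upgrade subsequential convergence to convergence of the whole family. First I would prove tightness of $\{\eta^\ell\}$ on $L^2(0,T;L^2)$. By Theorem~\ref{thm:well-posedness}, the bounds \eqref{thm:well-posedness.1}--\eqref{thm:well-posedness.2} hold uniformly in $\ell\in(0,1)$, so $\{(\omega_3^\ell,v_3^\ell)\}$ is bounded in $L^2(\Omega;L^\infty(0,T;L^2))\cap L^2(\Omega;L^2(0,T;H^1))$. As in the discussion after Definition~\ref{sect-5-definition}, I would decompose $\omega_3^\ell=\omega_3(0)+V_\omega^\ell+M_\omega^\ell$ (and similarly for $v_3^\ell$), check that $V_\omega^\ell$ is bounded, uniformly in $\ell$ and in probability, in $W^{1,2}(0,T;H^{-1})$ (each summand, including $\nabla\cdot[\nabla Q_{H,3}^\ell(0)\,\omega_H^\ell]$, lies in $L^2(0,T;H^{-1})$ by the uniform bounds and Hypothesis~\ref{basic-hypotheses}(c)), and estimate the martingale increments in a negative Sobolev space: for $s>2$, using $\sigma_k^{\ell,H}\cdot\nabla f=\nabla\cdot(\sigma_k^{\ell,H}f)$ together with \eqref{covariance-matrix-expansion}, one obtains $\E\,\|M_\omega^\ell(t)-M_\omega^\ell(r)\|_{H^{-s}}^2\lesssim\big(\|\mathbb Q_H^\ell\|_{L^2\to L^2}+\|\mathbb Q_3^\ell\|_{L^2\to L^2}\big)\,|t-r|$, which is uniformly bounded. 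Hence $M_\omega^\ell$ is bounded in $W^{\alpha,2}(0,T;H^{-s})$ for $\alpha<1/2$, and the usual Aubin--Lions--Simon type criterion (see e.g. \cite{FGL21a}) yields tightness in $L^2(0,T;L^2)$; the same works for $v_3^\ell$. It is essential here to use the operator norm $\|\mathbb Q_H^\ell\|_{L^2\to L^2}$ and not the trace ${\rm Tr}(Q_H^\ell(0))$, which diverges as $\ell\to0$ (Remark~\ref{rem-estimates-horizontal-covar}).

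Next, by Prokhorov's and Skorokhod's theorems I would pass to a subsequence and realize $(\omega_3^\ell,v_3^\ell)\to(\omega_3,v_3)$ almost surely in $L^2(0,T;L^2)$ on a new probability space, with $\nabla\omega_3^\ell\rightharpoonup\nabla\omega_3$ and $\nabla v_3^\ell\rightharpoonup\nabla v_3$ weakly in $L^2(0,T;L^2)$ by the uniform $H^1$-bound; in particular $v_H^\ell=K\ast\omega_3^\ell\to v_H$ strongly in $L^2(0,T;H^1)$ and $\omega_H^\ell=\nabla^\perp v_3^\ell\rightharpoonup\omega_H$ weakly. I then pass to the limit in the weak formulation \eqref{eq:defin-1}--\eqref{eq:defin-2}. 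The nonlinear terms $\<\omega_3^\ell,v_H^\ell\cdot\nabla\phi\>$ and $\<v_3^\ell,v_H^\ell\cdot\nabla\phi\>$ converge thanks to the strong convergence of both factors in $L^2(0,T;L^2)$ and the uniform $L^\infty(0,T;L^2)$ bound; the corrector term, rewritten via \eqref{eq:corrector-L} as $\<\nabla\omega_3^\ell,\tfrac12 Q_H^\ell(0)\nabla\phi\>$, converges to $\<\nabla\omega_3,\tfrac12\bar Q\nabla\phi\>=\<\mathcal L_{\bar Q}\omega_3,\phi\>$ by Hypothesis~\ref{basic-hypotheses}(a) and the weak convergence of the gradient; and the AKA term $-\<\omega_H^\ell,(\nabla Q_{H,3}^\ell(0))^\ast\nabla\phi\>$ converges to $\<\nabla\cdot(A\,\omega_H),\phi\>$ by Hypothesis~\ref{basic-hypotheses}(c) and the weak convergence of $\omega_H^\ell$.

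The heart of the argument is showing that the martingale terms vanish in the limit. Testing against a fixed smooth $\phi$, the quadratic variation of $\sum_k\int_0^t\<v_3^\ell,\sigma_k^{\ell,H}\cdot\nabla\phi\>\,dW^k_s$ equals $\int_0^t\<v_3^\ell\nabla\phi,\mathbb Q_H^\ell(v_3^\ell\nabla\phi)\>\,ds\le\|\mathbb Q_H^\ell\|_{L^2\to L^2}\,\|\nabla\phi\|_{L^\infty}^2\int_0^t\|v_3^\ell\|_{L^2}^2\,ds$, which tends to $0$ by Hypothesis~\ref{basic-hypotheses}(b) and the energy bound; the two martingale contributions in the $\omega_3$-equation are handled identically, the $\omega_H^\ell\cdot\nabla\sigma_k^{\ell,3}$-part producing instead the factor $\|\mathbb Q_3^\ell\|_{L^2\to L^2}\int_0^t\|\nabla v_3^\ell\|_{L^2}^2\,ds$ (finite by \eqref{thm:well-posedness.1}). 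Thus each martingale converges to $0$ in $L^2$, hence a.s. along a further subsequence, so the limit $(\omega_3,v_3)$ solves the deterministic system \eqref{eq:2D-3C-model-general-limit} weakly, the class $L^\infty(0,T;L^2)\cap L^2(0,T;H^1)$ being inherited by lower semicontinuity. This proves the first assertion.

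For uniqueness under \eqref{eq:cond-unique-limit}, I would take two solutions and test the equations for the differences $\rho=\omega_3-\tilde\omega_3$, $\zeta=v_3-\tilde v_3$ against $\rho$ and $\zeta$, using $\omega_H-\tilde\omega_H=\nabla^\perp\zeta$ and $v_H-\tilde v_H=K\ast\rho$. The transport terms against the same divergence-free field vanish, and the nonlinear commutators $\<(K\ast\rho)\cdot\nabla\omega_3,\rho\>$, $\<(K\ast\rho)\cdot\nabla v_3,\zeta\>$ are absorbed by the viscous dissipation $\nu(\|\nabla\rho\|_{L^2}^2+\|\nabla\zeta\|_{L^2}^2)$ through the two-dimensional Ladyzhenskaya inequality. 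The key point is that the second-order and AKA contributions combine into $-\tfrac12\int_{\T^2}\big[\nabla\rho\cdot\bar Q\nabla\rho+\nabla\zeta\cdot\bar Q\nabla\zeta+2\,\nabla\rho\cdot A(\nabla\zeta)^\perp\big]$, whose integrand is exactly the quadratic form of \eqref{eq:cond-unique-limit} with $x=\nabla\rho$ and $y=\nabla\zeta$; it is therefore pointwise nonnegative and may be discarded. A Gr\"onwall argument then forces $\rho\equiv\zeta\equiv0$. Combined with the tightness, uniqueness of the limit shows that every subsequence converges to the same deterministic limit, so the full family $\{(\omega_3^\ell,v_3^\ell)\}$ converges in law in $L^2(0,T;L^2)$. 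I expect the main obstacle to be the tightness-plus-identification of the martingale term: one must exploit the decay of the operator norm $\|\mathbb Q_H^\ell\|_{L^2\to L^2}$ (rather than the diverging trace) to keep the correctors of order one while simultaneously making the fluctuations negligible --- this is precisely where the stretching term, which obstructs the Boussinesq hypothesis in a general 3D setting, is tamed by the 2D-3C geometry.
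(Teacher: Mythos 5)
Your proposal is correct and follows essentially the same route as the paper: uniform bounds from Theorem \ref{thm:well-posedness} plus time-increment estimates give tightness on $L^2(0,T;L^2)$, Prokhorov--Skorokhod yields an a.s.\ convergent realization, the martingale terms are shown to vanish by It\^o isometry combined with the decay of $\|\mathbb Q_H^\ell\|_{L^2\to L^2}$ and $\|\mathbb Q_3^\ell\|_{L^2\to L^2}$ from Hypothesis \ref{basic-hypotheses}(b), and uniqueness under \eqref{eq:cond-unique-limit} is obtained from precisely the paper's pointwise quadratic-form identification ($x=\nabla\omega_3$, $y=\nabla v_3$, so that $y^\perp=\nabla^\perp v_3=\omega_H$) followed by Gr\"onwall. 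One factual slip, harmless to your argument: under Hypothesis \ref{basic-hypotheses}(a) the trace ${\rm Tr}(Q_H^\ell(0))$ does \emph{not} diverge as $\ell\to 0$ (it converges to ${\rm Tr}(\bar Q)$; in the paper's example it is identically $4\kappa$, and Remark \ref{rem-estimates-horizontal-covar} concerns the unrescaled intensity $\Gamma$), and indeed the paper's own tightness estimate (Lemma \ref{lem-a-priori-2}) is run with the trace and $\|\nabla^2 Q_3^\ell(0)\|_M$, both uniformly bounded under the Hypothesis; your operator-norm variant in $H^{-s}$, $s>2$, also works, it is simply not ``essential''.
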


This result will be proved in Section \ref{sect-proof-scaling-limit}, where the key ingredient is to prove that the martingale parts vanish in a weak sense. The proof of uniqueness of \eqref{eq:2D-3C-model-general-limit} is standard and follows by applying the Lions-Magenes lemma. As $\bar Q$ is nonnegative definite, it is clear that \eqref{eq:cond-unique-limit} holds in the case $A=0$; in the example given below Remark \ref{rem-sect-5-AKA}, we have $\bar Q= 2\kappa I_2$ and thus \eqref{eq:cond-unique-limit} also holds if $q_0$ defined in \eqref{cond-nonzero-limit-matrix} belongs to the interval $[-1,1]$.

\begin{remark}\label{rem-sect-5-AKA}
Let us interpret the above result. First, Boussinesq hypothesis is verified, but, as expected from the geometric structure of the problem, the additional dissipation acts in the horizontal direction only. Second, the presence of a random stretching term (absent in previous works) may give rise to a first order term, $\nabla\cdot (A\, \omega_H)$, which could increase exponentially the size of the vorticity, like in the dynamo problem of magnetic fields subject to turbulence \cite{Seshasa}.
\end{remark}

Next, we provide an example of vortex noise verifying the above conditions. We begin with heuristic discussions on how to choose the right scaling of noise. Recall \eqref{typical-example} for typical choices of velocity fields of the vortex; we assume that the function $\theta\in C_c^\infty\big((-1/2, 1/2)^2\big)$ is a radially symmetric probability density function, then by Remark \ref{rem-diagonal-horizontal-covar}, the horizontal covariance matrix $Q_H(0)$ is diagonal. In this case, the second order differential operator $\mathcal L$ in It\^o equations \eqref{eq:stoch-2D-3C-model-Ito} takes the form
  $$  \mathcal L= \frac14{\rm Tr}(Q_{H}(0)) \Delta .  $$
As in Remark \ref{rem-estimates-horizontal-covar}, we define rescaled vortex: $\theta_\ell(x)= \ell^{-2} \theta(\ell^{-1}x),\, x\in \T^2$, where $\ell\in (0,1)$ stands for the ``size'' of the vortex; then we define
  $$\sigma_H^\ell= \Gamma_\ell\, K\ast \theta_\ell. $$
We have to determine the dependence of the (horizontal) vortex intensity $\Gamma_\ell$ on $\ell$. By \eqref{eq:trace-lower-bound}, if  $\Gamma_\ell$ were a constant independent of $\ell$, then ${\rm Tr}(Q_H(0)) \sim \|K\ast \theta_\ell \|_{L^2}^2$ explodes at the rate of $\log \ell^{-1}$. In order that the vortex has constant energy as $\ell\to 0$, we choose $\Gamma_\ell$ to be a function of $\ell$ in such a way that $Q_H^\ell(0)$ is a constant matrix.
Thus, we fix some $\kappa>0$ and define
  \begin{equation}\label{eq:Gamma-1-ell}
  \Gamma_\ell= 2\sqrt{\kappa}\, \|K\ast \theta_\ell \|_{L^2}^{-1} \sim (\log \ell^{-1})^{-1/2}.
  \end{equation}
It remains to introduce the rescaled vertical velocity $\sigma_3^\ell$ of the vortex:
  \begin{equation}\label{vertical-velocity}
  \sigma_3^\ell = \gamma_\ell\, G\ast \chi_\ell,
  \end{equation}
where $\gamma_\ell$ vanishes as $\ell\to 0$, $G$ is the Green function on $\T^2$ and $\chi\in C_c^\infty\big((-1/2, 1/2)^2\big)$ is radially symmetric; we remark that $\gamma_\ell$ may not be the same as $\Gamma_\ell$.

Having $\sigma_H^\ell$ and $\sigma_3^\ell$ in mind, we can finally define as above the covariance matrices $Q^\ell, \, Q_H^\ell$ etc., and the associated operators $\mathbb Q^\ell, \, \mathbb Q_H^\ell$. Then by Remark \ref{rem-diagonal-horizontal-covar} and \eqref{eq:Gamma-1-ell}, $Q^\ell_{H}(0) =2 \kappa I_2$ is a constant matrix and thus (a) of Hypothesis \ref{basic-hypotheses} is verified; in particular,
  $$ {\rm Tr}(Q^\ell_{H}(0)) = 4\kappa. $$
Moreover, by Lemma \ref{lem-covar-operator}, one has $\|\mathbb Q_H^\ell \|_{L^2\to L^2} \le \Gamma_\ell^2 \|K \|_{L^1}^2 $ because $\theta$ is a probability density; \eqref{eq:Gamma-1-ell} implies that $\|\mathbb Q_H^\ell \|_{L^2\to L^2}$ vanishes as $\ell\to 0$. In the same way, $\|\mathbb Q_3^\ell \|_{L^2\to L^2} \le \gamma_\ell^2 \|G \|_{L^1}^2 \|\chi \|_{L^1}^2$ vanishes as well in the limit $\ell\to 0$. Therefore, condition (b) in Hypothesis \ref{basic-hypotheses} holds too.

In order to verify conditions (c) and (d) in Hypothesis \ref{basic-hypotheses}, we need some other assumptions.

\begin{proposition} \label{prop:limit-matrix}
Assume that $\theta, \chi\in C_c^\infty\big((-\frac12, \frac12)^2\big)$ are radially symmetric probability density functions.
\begin{itemize}
\item[(1)] If $\gamma_\ell = o(\Gamma_\ell)$ as $\ell\to 0$, then the limit $A:= \lim_{\ell\to 0} \nabla Q_{H,3}^\ell(0)$ is null, i.e. $A=0$.
\item[(2)] Assume that
  \begin{equation} \label{cond-nonzero-limit-matrix}
  q_0 :=\lim_{\ell\to 0} \frac{\gamma_\ell}{ \Gamma_\ell} \neq 0,
  \end{equation}
then the matrix
  $$A = 2\,\kappa\, q_0 \begin{pmatrix}
  0 &\ 1\\
  -1 &\ 0
  \end{pmatrix} . $$
\item[(3)] If $\gamma_\ell = O(\Gamma_\ell)$ as $\ell\to 0$, then $\sup_{\ell\in (0,1)} \|\nabla^2 Q_3^\ell(0) \|_M <+\infty$.
\end{itemize}
\end{proposition}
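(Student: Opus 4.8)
The plan is to reduce all four entries of $\nabla Q_{H,3}^\ell(0)$, and the Hessian $\nabla^2 Q_3^\ell(0)$, to integrals of the two scalar potentials $\psi_\ell:=G\ast\theta_\ell$ and $\phi_\ell:=G\ast\chi_\ell$, and then to separate an \emph{exact} symmetry part from an \emph{asymptotic} part. First I would write $Q_{i,3}^\ell(a)=\int_{\T^2}\sigma_i^\ell(a-x)\,\sigma_3^\ell(x)\,dx$, differentiate in $a$, set $a=0$, and use the evenness of $\sigma_3^\ell$ coming from \eqref{vortex-symmetry} to get $\partial_j Q_{i,3}^\ell(0)=\int_{\T^2}(\partial_j\sigma_i^\ell)\,\sigma_3^\ell\,dx$. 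Inserting $\sigma_H^\ell=\Gamma_\ell\nabla^\perp\psi_\ell$ and $\sigma_3^\ell=\gamma_\ell\phi_\ell$ turns the diagonal entries into multiples of $\int\partial_1\partial_2\psi_\ell\,\phi_\ell\,dx$ and the off-diagonal entries into multiples of $\int\partial_1^2\psi_\ell\,\phi_\ell\,dx$ and $\int\partial_2^2\psi_\ell\,\phi_\ell\,dx$, each carrying the prefactor $\Gamma_\ell\gamma_\ell$.

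Next I would use that $-\Delta$, the Dirac mass and the lattice $\Z^2$ are all invariant under the reflections $x_i\mapsto -x_i$ and the swap $x_1\leftrightarrow x_2$; hence the periodic Green function $G$ enjoys these symmetries exactly, and since $\theta,\chi$ are radial, so do $\psi_\ell$ and $\phi_\ell$. Then $\partial_1\partial_2\psi_\ell$ is odd in $x_1$ while $\phi_\ell$ is even, so both diagonal entries vanish identically, and the swap symmetry gives $\int\partial_1^2\psi_\ell\,\phi_\ell\,dx=\int\partial_2^2\psi_\ell\,\phi_\ell\,dx$, so the two off-diagonal entries are opposite. This produces the antisymmetric form with a single scalar $I_\ell:=\int\partial_2^2\psi_\ell\,\phi_\ell\,dx=\tfrac12\int\Delta\psi_\ell\,\phi_\ell\,dx$; using that $\Delta\psi_\ell$ equals $\theta_\ell$ up to its spatial mean and $\int\phi_\ell\,dx=0$, I would rewrite (up to the sign fixed by the normalization of $G$) $I_\ell=\tfrac12\iint_{\T^2\times\T^2}\theta_\ell(y)\,G(y-z)\,\chi_\ell(z)\,dy\,dz$.

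The only analytic input is then the logarithmic asymptotics: both this cross-energy and the self-energy $\|K\ast\theta_\ell\|_{L^2}^2=\iint\theta_\ell\,G\,\theta_\ell$ diverge like the \emph{same} universal constant times $\log\ell^{-1}$, because only the total masses (both equal to $1$) enter the leading term while the smooth part of $G$ contributes $O(1)$; hence their ratio tends to $1$. This is exactly the content behind \eqref{eq:trace-lower-bound} and Lemma \ref{lem-limit-integral-G}, which I would cite. Combined with the normalization \eqref{eq:Gamma-1-ell}, arranged so that $\Gamma_\ell^2\|K\ast\theta_\ell\|_{L^2}^2=4\kappa$ (equivalently $Q_H^\ell(0)=2\kappa I_2$), this gives $\Gamma_\ell^2 I_\ell\to 2\kappa$. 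Writing $\Gamma_\ell\gamma_\ell I_\ell=(\gamma_\ell/\Gamma_\ell)\,\Gamma_\ell^2 I_\ell$ immediately yields the limit $2\kappa q_0$ when $\gamma_\ell/\Gamma_\ell\to q_0$, proving (2), and the limit $0$ when $\gamma_\ell=o(\Gamma_\ell)$, which together with the vanishing diagonal proves (1).

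For (3) I would argue in the same spirit: from $Q_3^\ell=\sigma_3^\ell\ast\sigma_3^\ell$ and the oddness of $\partial_i\sigma_3^\ell$ one gets $\partial_i\partial_j Q_3^\ell(0)=-\int_{\T^2}\partial_i\sigma_3^\ell\,\partial_j\sigma_3^\ell\,dx$, so that $\|\nabla^2 Q_3^\ell(0)\|_M\lesssim\|\nabla\sigma_3^\ell\|_{L^2}^2=\gamma_\ell^2\iint\chi_\ell\,G\,\chi_\ell\sim C\,\gamma_\ell^2\log\ell^{-1}$. Since \eqref{eq:Gamma-1-ell} makes $\Gamma_\ell^2\log\ell^{-1}$ bounded, the hypothesis $\gamma_\ell=O(\Gamma_\ell)$ makes $\gamma_\ell^2\log\ell^{-1}$ bounded, giving the uniform bound in (3). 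The main obstacle is purely the asymptotic step: one must ensure that the regular part of the periodic Green function only affects the $O(1)$ level and is therefore washed out in the ratio against the divergent $\log\ell^{-1}$, while being careful that the reflection and swap symmetries are genuinely exact (so that the diagonal vanishes and the off-diagonal antisymmetrizes without error).
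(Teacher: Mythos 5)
Your proposal is correct, and for the decisive step it takes a genuinely different route from the paper. The preliminary reduction is the same in both: write the entries $\partial_j Q^\ell_{i,3}(0)$ as quadratic integrals of $G\ast\theta_\ell$ and $G\ast\chi_\ell$, use the reflection symmetries of the periodic Green function together with radial symmetry of $\theta,\chi$ to make two entries vanish identically and to antisymmetrize the other two (this is the paper's \eqref{eq:vanishment-cross-terms}), and factor $\Gamma_\ell\gamma_\ell=(\gamma_\ell/\Gamma_\ell)\,\Gamma_\ell^2$. The difference is how the divergent scalar is evaluated. The paper proves the Gram--matrix asymptotics $\<\partial_i G\ast\theta_\ell,\partial_j G\ast\chi_\ell\>\sim\frac{\delta_{ij}}{4\pi}\log\ell^{-1}$ (Proposition \ref{prop-growth-norm}) by rescaling to the large torus $\T^2_{\ell^{-1}}$ (Lemma \ref{lem-expression-derivative-G}), a pointwise far-field comparison of $\nabla G_{\R^2}\ast\phi$ with $\nabla G_{\R^2}$ (Lemma \ref{lem-nabla-G}), and the explicit integral asymptotics of $(\partial_i G_{\R^2})^2$ (Lemma \ref{lem-limit-integral-G}). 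You instead use the swap symmetry $x_1\leftrightarrow x_2$ to identify the two off-diagonal integrals and sum them into a Laplacian, so that $I_\ell=\frac12\int\Delta(G\ast\theta_\ell)(G\ast\chi_\ell)\,dx$ collapses (up to an additive constant, which is harmless since it is multiplied by $\Gamma_\ell\gamma_\ell\to0$) to the Green cross-energy $\iint\theta_\ell(y)G(y-z)\chi_\ell(z)\,dy\,dz$ up to sign and a factor $\frac12$; the same integration by parts identifies $\|K\ast\theta_\ell\|_{L^2}^2$ with the self-energy $\iint\theta_\ell G\theta_\ell$, both energies have the same leading term $\frac1{2\pi}\log\ell^{-1}+O(1)$, and the normalization \eqref{eq:Gamma-1-ell} then gives (1) and (2); part (3) follows by the same bookkeeping. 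Your route is shorter and more elementary: its only analytic input is the splitting $G=G_{\R^2}+\zeta$ and one rescaling of a double integral against bounded, compactly supported densities, with no pointwise decay estimates. What the paper's heavier machinery buys is the entrywise limit of the full Gram matrix, which it reuses verbatim to get \eqref{Gamma-ell-asymptotics} and part (3); your Laplacian trick sees only the combination $\partial_1^2+\partial_2^2$ and needs radial symmetry to split it into equal halves, but since radial symmetry is assumed in the proposition (and used by the paper as well), nothing is lost here.

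Two points to tighten. First, Lemma \ref{lem-limit-integral-G}, which you propose to cite for the logarithmic divergence, is a statement about $\int_{\T^2_{\ell^{-1}}\setminus B_R}(\partial_i G_{\R^2})^2\,dx$, not about the potential-form energies you actually use; you must prove $\iint\theta_\ell(y)G(y-z)\chi_\ell(z)\,dy\,dz=\frac1{2\pi}\log\ell^{-1}+O(1)$ yourself (split off $\zeta$, rescale $y=\ell y'$, $z=\ell z'$, use local integrability of the logarithm). It is a three-line computation, but it is not contained in the paper's lemma. Second, the sign of $A$ is not, as you suggest, at the mercy of the normalization of $G$: every quantity involved ($I_\ell$, $\|K\ast\theta_\ell\|_{L^2}^2$, $Q^\ell_{H,3}$) is quadratic in $G$, hence invariant under $G\mapsto -G$, so the sign is determined. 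Carrying your computation through carefully gives $\Gamma_\ell^2 I_\ell\to-2\kappa$ (not $+2\kappa$), hence $A=2\kappa q_0\begin{pmatrix}0&-1\\1&0\end{pmatrix}$. Interestingly, this is exactly what the paper's own final display in Section \ref{subsec-proof-prop} yields as well, and it is the opposite of the matrix displayed in the statement of Proposition \ref{prop:limit-matrix}; so once your signs are pinned down you agree with the paper's proof, and the residual discrepancy is internal to the paper rather than a defect of your argument.
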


\begin{remark}
Assume condition \eqref{cond-nonzero-limit-matrix}. From the proof of Lemma \ref{lem-nabla-G} in Section \ref{subsec-proof-prop}, it is clear that we can relax the nonnegativity condition on $\chi$, but assuming $a_0:= \int_{\T^2} \chi \, dx \neq 0$; of course, the constant in $A$ would be different. In this case, one can show as in the proof of Lemma \ref{lem-nabla-G} that
  $$(\partial_i G_{\R^2})\ast \chi(x) \sim a_0\, \partial_i G_{\R^2}(x) \quad \mbox{for } |x|\gg 1, $$
where $G_{\R^2}$ is the Green function on $\R^2$. However, if $a_0=0$, then $(\partial_i G_{\R^2})\ast \chi(x)$ decays too fast as $|x|\to \infty$; this would result in that $A$ is the trivial null matrix.
\end{remark}

\section{A priori estimates and proof of Theorem \ref{thm:well-posedness}} \label{sec-proof-well-posedness}

This section consists of three parts: in Section \ref{subsec-Ito-corrector} we first provide the computations which lead the system \eqref{eq:stoch-2D-3C-model} to its It\^o formulation \eqref{eq:stoch-2D-3C-model-Ito}, then we derive in Section \ref{sec-a-priori} the a priori estimates necessary for proving the existence of solutions, finally we give a sketched proof of Theorem \ref{thm:well-posedness} in Section \ref{subsec-proof-well-posedness}, focusing on the pathwise uniqueness of \eqref{eq:stoch-2D-3C-model-Ito}.

\subsection{It\^o-Stratonovich corrector} \label{subsec-Ito-corrector}

Recall the stochastic 2D-3C model in Stratonovich form:
  \begin{equation}\label{stoch-2D-3C-eq}
   \left\{ \aligned
  d\omega_{3} &= (-v_{H} \cdot \nabla \omega_{3} + \nu\Delta  \omega_3)\, dt - \sum_k \big(\sigma_k^H \cdot \nabla \omega_{3} -\omega_{H}\cdot \nabla  \sigma_k^3 \big)\circ dW^k_t, \\
  dv_3 &= (-v_{H} \cdot \nabla  v_3 + \nu\Delta  v_3)\, dt - \sum_k \sigma_k^H \cdot\nabla v_{3} \circ dW^k_t.
  \endaligned \right.
  \end{equation}
As the equation for $v_3$ is driven by pure transport noise, it is well known that the corresponding It\^o equation is
  $$dv_3 = (-v_{H} \cdot \nabla  v_3 + \nu\Delta  v_3 + \mathcal L v_3)\, dt - \sum_k \sigma_k^H \cdot\nabla v_{3} \, dW^k_t,$$
where the second order differential operator $\mathcal L$ is defined in \eqref{eq:corrector-L}.

We turn to deriving the It\^o equation for $\omega_3$:
  \begin{equation}\label{eq:omega-3}
  \aligned
  d\omega_{3} + (v_{H} \cdot \nabla \omega_{3} - \nu\Delta  \omega_3)\, dt
  & = - \sum_k \big(\sigma_k^H \cdot \nabla \omega_{3} -\omega_{H}\cdot \nabla  \sigma_k^3 \big)\, dW^k_t\\
  &\quad -\frac12 \sum_k  d \big[ \sigma_k^H \cdot \nabla \omega_{3} -\omega_{H}\cdot \nabla  \sigma_k^3, W^k \big]_t .
  \endaligned
  \end{equation}
Noting that $\omega_{H}= \nabla^\perp v_3$, it holds
  $$\aligned
  d \big(\sigma_k^H \cdot \nabla \omega_{3} -\omega_{H}\cdot \nabla  \sigma_k^3 \big)
  &= \sigma_k^H \cdot \nabla (d\omega_{3}) -\nabla ^{\perp}(dv_{3}) \cdot \nabla  \sigma_k^3 \\
  &= V\, dt - \sum_l \sigma_k^H \cdot \nabla  \big(\sigma_l^H \cdot \nabla \omega_{3} -\omega_{H}\cdot \nabla  \sigma_l^3 \big)\circ dW^l_t \\
  &\quad + \sum_l \big(\nabla ^{\perp} (\sigma_l^H \cdot\nabla v_{3}) \cdot \nabla  \sigma_k^3 \big) \circ dW^l_t,
  \endaligned $$
where $V\, dt$ is the finite variation part with
  $$V= \sigma_k^H \cdot \nabla(-v_{H} \cdot \nabla \omega_{3} + \nu\Delta  \omega_3) - \nabla ^{\perp}(-v_{H} \cdot \nabla  v_3 + \nu\Delta  v_3) \cdot \nabla  \sigma_k^3,$$
thus
  $$\aligned
  & d \big[ \sigma_k^H \cdot \nabla \omega_{3} -\omega_{H}\cdot \nabla  \sigma_k^3, W^k \big]_t\\
  &= -\sigma_k^H \cdot \nabla  \big(\sigma_k^H \cdot \nabla \omega_{3} -\omega_{H}\cdot \nabla  \sigma_k^3 \big)\, dt + \big(\nabla ^{\perp} (\sigma_k^H \cdot\nabla v_{3}) \cdot \nabla  \sigma_k^3 \big)\, dt.
  \endaligned $$
Therefore, the last term in \eqref{eq:omega-3} (modulo $dt$) is
  \begin{equation}\label{eq:Ito-corrector}
  \aligned
  & \frac12 \sum_k \sigma_k^H \cdot \nabla  \big(\sigma_k^H \cdot \nabla \omega_{3} -\omega_{H}\cdot \nabla  \sigma_k^3 \big) -\frac12 \sum_k \nabla ^{\perp} (\sigma_k^H \cdot\nabla v_{3}) \cdot \nabla  \sigma_k^3 \\
  &= \frac12 \sum_k \sigma_k^H \cdot \nabla  (\sigma_k^H \cdot \nabla \omega_{3}) - \frac12 \sum_k \sigma_k^H \cdot \nabla  \big( \omega_{H}\cdot \nabla  \sigma_k^3 \big)\\
  &\quad - \frac12 \sum_k \nabla^{\perp} (\sigma_k^H \cdot\nabla v_{3}) \cdot \nabla  \sigma_k^3 \\
  &=: \mathcal L \omega_3 -\frac12 (I+J).
  \endaligned
  \end{equation}
It is sufficient to compute the last two sums, denoted by $I$ and $J$ respectively.

\begin{lemma}\label{lem-I-J-expression}
One has
  $$I=J= \sum_k (\sigma_k^H \cdot \nabla  \omega_{H} ) \cdot \nabla  \sigma_k^3 -\omega_{H} \cdot \begin{pmatrix}
  \sum_k ( \partial_1 \sigma_k^H) \cdot\nabla  \sigma_k^3 \smallskip \\
  \sum_k ( \partial_2 \sigma_k^H) \cdot\nabla  \sigma_k^3
  \end{pmatrix} . $$
\end{lemma}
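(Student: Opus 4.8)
The plan is to handle the two sums separately and then reconcile them, the point being that each equals the stated expression although they differ term by term before the sum over $k$ is taken. I would start with $J=\sum_k\nabla^{\perp}(\sigma_k^H\cdot\nabla v_3)\cdot\nabla\sigma_k^3$, which is the more transparent of the two. Since every $\sigma_k^H$ is divergence free and $\omega_H=\nabla^{\perp}v_3$, the curl identity recorded above, $\nabla^{\perp}(u\cdot\nabla v_3)=u\cdot\nabla\omega_H-\omega_H\cdot\nabla u$ for divergence free $u$, applies with $u=\sigma_k^H$ and yields $\nabla^{\perp}(\sigma_k^H\cdot\nabla v_3)=\sigma_k^H\cdot\nabla\omega_H-\omega_H\cdot\nabla\sigma_k^H$. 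Taking the scalar product with $\nabla\sigma_k^3$, summing over $k$, and rewriting $(\omega_H\cdot\nabla\sigma_k^H)\cdot\nabla\sigma_k^3=\sum_i(\omega_H)_i\,(\partial_i\sigma_k^H)\cdot\nabla\sigma_k^3$, I obtain the claimed formula for $J$ at once. This step is purely algebraic and holds summand by summand.

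For $I=\sum_k\sigma_k^H\cdot\nabla(\omega_H\cdot\nabla\sigma_k^3)$ I would expand by the Leibniz rule in coordinates: $\sigma_k^H\cdot\nabla(\omega_H\cdot\nabla\sigma_k^3)=(\sigma_k^H\cdot\nabla\omega_H)\cdot\nabla\sigma_k^3+\sum_{i,j}(\sigma_k^H)_i(\omega_H)_j\,\partial_i\partial_j\sigma_k^3$. The first term already matches the leading term of $J$, so it remains to show that after summation over $k$ the residual second-order term $S:=\sum_k\sum_{i,j}(\sigma_k^H)_i(\omega_H)_j\,\partial_i\partial_j\sigma_k^3$ equals $-\sum_i(\omega_H)_i\sum_k(\partial_i\sigma_k^H)\cdot\nabla\sigma_k^3$, i.e.\ exactly the negative of the second term in the statement. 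Relabelling $i\leftrightarrow j$ in $S$ and combining, this reduces to proving that for each fixed $j$ the gradient of $g_j(x):=\sum_k(\sigma_k^H)_j(x)\,\partial_j\sigma_k^3(x)$ vanishes.

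The heart of the argument — and the only place where the structure of the noise enters — is the observation that each diagonal quantity $g_j$ is constant in $x$. This follows from the series representation \eqref{covariance-matrix-expansion}: writing its horizontal–vertical block as $(Q_{H,3})_i(x-y)=\sum_k(\sigma_k^H)_i(x)\,\sigma_k^3(y)$, differentiating in $y_j$ and setting $y=x$ gives $\sum_k(\sigma_k^H)_i(x)\,\partial_j\sigma_k^3(x)=-\partial_j(Q_{H,3})_i(0)$, a constant independent of $x$; in particular $g_j$ is constant, so $\partial_ig_j\equiv0$. Feeding this back shows $S=-\sum_i(\omega_H)_i\sum_k(\partial_i\sigma_k^H)\cdot\nabla\sigma_k^3$, hence $I$ coincides with the stated expression and $I=J$. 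Equivalently, one may skip the explicit formulas and compute the difference directly as $J-I=-\sum_{i,j}(\omega_H)_i\,\partial_ig_j=0$.

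I expect the main obstacle to be precisely this second-order term in $I$: per summand $I$ and $J$ genuinely differ, one carrying $\partial_i\partial_j\sigma_k^3$ and the other $\partial_i(\sigma_k^H)_j\,\partial_j\sigma_k^3$, so the identity is false term by term and is restored only after summation, through the constancy of $g_j$, which is a differentiated form of the covariance identity on the diagonal. One must also justify interchanging the $k$-sum with the derivatives, which is legitimate because by the smoothness of $Q$ the series \eqref{covariance-matrix-expansion} converges in $C^m(\T^2\times\T^2)$, and keep careful track of the perpendicular gradient and of the index relabelling; these are the only points where an error could creep in.
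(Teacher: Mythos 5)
Your proof is correct and follows essentially the same route as the paper: for $I$, the Leibniz expansion followed by the observation that $\sum_k\sigma_k^H\cdot\nabla\sigma_k^3$ is constant in $x$ (space homogeneity of the covariance, obtained exactly as you do by differentiating $\sum_k\sigma_k^i(x)\,\sigma_k^3(y)=Q_{i,3}(x-y)$ and setting $y=x$) is precisely the paper's key step, and it is likewise the only point where the statement fails summand-by-summand. The only cosmetic difference is in $J$: you invoke the curl identity $\nabla^{\perp}(u\cdot\nabla v_3)=u\cdot\nabla\omega_H-\omega_H\cdot\nabla u$ already recorded in the introduction, whereas the paper re-derives it on the spot by expanding in coordinates and using the divergence-free relations $\partial_1\sigma_k^H=-\nabla^{\perp}\sigma_k^2$, $\partial_2\sigma_k^H=\nabla^{\perp}\sigma_k^1$.
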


\begin{proof}
We have
  $$\aligned
  I&= \sum_k \sigma_k^H \cdot \nabla  \big( \omega_{H}\cdot \nabla  \sigma_k^3 \big) \\
  &= \sum_k (\sigma_k^H \cdot \nabla  \omega_{H} ) \cdot \nabla  \sigma_k^3 + \omega_{H} \cdot\bigg( \sum_k \sigma_k^H \cdot \nabla  ( \nabla  \sigma_k^3 ) \bigg) .
  \endaligned $$
The second term can be slightly simplified: for $i\in \{1,2\}$,
  $$\aligned
  \sum_k \sigma_k^H \cdot \nabla  ( \partial_i \sigma_k^3 )
  &= \sum_k \sigma_k^H \cdot \partial_i (\nabla  \sigma_k^3 ) \\
  &= \sum_k \big[ \partial_i ( \sigma_k^H \cdot\nabla  \sigma_k^3 ) - ( \partial_i \sigma_k^H) \cdot\nabla  \sigma_k^3 \big] \\
  &= \partial_i \bigg[\sum_k \sigma_k^H \cdot\nabla  \sigma_k^3 \bigg]- \sum_k ( \partial_i \sigma_k^H) \cdot\nabla  \sigma_k^3 \\
  &= - \sum_k ( \partial_i \sigma_k^H) \cdot\nabla  \sigma_k^3,
  \endaligned $$
where we have used the fact that
  $$\sum_k \sigma_k^H \cdot\nabla \sigma_k^3= \sum_{j=1}^2 \sum_k \sigma_k^j\, \partial_j \sigma_k^3= -\sum_{j=1}^2 \partial_j Q_{j,3}(0)$$
is a constant matrix, due to the space homogeneity of covariance function. From this we immediately get the expression for $I$.

It remains to compute
  $$\aligned
  J&= \sum_k \nabla ^{\perp} (\sigma_k^H \cdot\nabla v_{3}) \cdot \nabla  \sigma_k^3\\
  &= \sum_k \nabla ^{\perp}(\sigma_k^1 \partial_1 v_3 + \sigma_k^2 \partial_2 v_3) \cdot \nabla  \sigma_k^3 \\
  &= \sum_k \big[ (\partial_1 v_3) \nabla ^{\perp} \sigma_k^1 + (\partial_2 v_3)\nabla ^{\perp} \sigma_k^2 \big]  \cdot \nabla  \sigma_k^3 \\
  &\quad + \sum_k \big[ \sigma_k^1 \nabla ^{\perp} (\partial_1 v_3) + \sigma_k^2 \nabla ^{\perp} (\partial_2 v_3) \big] \cdot \nabla  \sigma_k^3\, ;
  \endaligned $$
note that $\nabla ^{\perp} (\partial_i v_3)= \partial_i \nabla ^{\perp} v_3= \partial_i \omega_{H}$, thus,
  $$\aligned
  J&= (\partial_1 v_3) \sum_k \nabla ^{\perp} \sigma_k^1  \cdot \nabla  \sigma_k^3 + (\partial_2 v_3) \sum _k \nabla ^{\perp} \sigma_k^2 \cdot \nabla  \sigma_k^3 \\
  &\quad + \sum_k \big[ \sigma_k^1 \partial_1 \omega_{H} + \sigma_k^2 \partial_2 \omega_{H} \big] \cdot \nabla  \sigma_k^3 \\
  &= \omega_{H} \cdot \begin{pmatrix}
  \sum _k \nabla ^{\perp} \sigma_k^2 \cdot \nabla  \sigma_k^3 \smallskip \\
  - \sum_k \nabla ^{\perp} \sigma_k^1  \cdot \nabla  \sigma_k^3
  \end{pmatrix} + \sum_k (\sigma_k^H \cdot \nabla\omega_{H}) \cdot \nabla  \sigma_k^3.
  \endaligned $$

We see that $I$ and $J$ contain a common term; in order to show that they coincide, it suffices to check that the remaining terms are the same. Indeed,
  $$\partial_1 \sigma_k^H= \begin{pmatrix}
  \partial_1 \sigma_k^1 \smallskip \\
  \partial_1 \sigma_k^2 \end{pmatrix}
  = \begin{pmatrix}
  -\partial_2 \sigma_k^2 \smallskip \\
  \partial_1 \sigma_k^2 \end{pmatrix}
  = -\nabla ^\perp \sigma_k^2.  $$
In the same way, $\partial_2 \sigma_k^H= \nabla ^\perp \sigma_k^1$; as a consequence, $I=J$.
\end{proof} \medskip

Now we show that the second addend in $I$ and $J$ vanishes.

\begin{lemma}\label{lem-I-J-expression-2}
It holds that
  \begin{equation}\label{eq:I=J}
  I=J= \sum_k (\sigma_k^H \cdot \nabla\omega_{H}) \cdot \nabla  \sigma_k^3.
  \end{equation}
\end{lemma}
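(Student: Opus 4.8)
The plan is to observe that the vector
$$
\begin{pmatrix}
\sum_k (\partial_1 \sigma_k^H) \cdot \nabla \sigma_k^3 \\
\sum_k (\partial_2 \sigma_k^H) \cdot \nabla \sigma_k^3
\end{pmatrix}
$$
appearing in Lemma \ref{lem-I-J-expression} vanishes identically; once this is established, its inner product with $\omega_H$ is zero regardless of $\omega_H$, which directly yields \eqref{eq:I=J}. Thus the whole task reduces to proving that for each fixed $i \in \{1,2\}$ one has $\sum_k (\partial_i \sigma_k^H) \cdot \nabla \sigma_k^3 = \sum_{a=1}^2 \sum_k \partial_i \sigma_k^a\, \partial_a \sigma_k^3 = 0$.

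The key step is to rewrite each of these sums as a second-order derivative of the covariance function $Q_{H,3}$ at the origin. Starting from the $(a,3)$-component of \eqref{covariance-matrix-expansion}, namely $Q_{a,3}(x-y) = \sum_k \sigma_k^a(x)\,\sigma_k^3(y)$, I would differentiate once in $x_i$ and once in $y_j$ --- this is legitimate since the series converges in $C^m(\T^2\times\T^2)$ for every $m$ --- and then set $x=y$. Tracking the sign produced by differentiating in the second variable gives $\sum_k \partial_i \sigma_k^a\, \partial_j \sigma_k^3 = -\partial_i \partial_j Q_{a,3}(0)$. Summing over $a$ with $j=a$ then yields
$$
\sum_k (\partial_i \sigma_k^H) \cdot \nabla \sigma_k^3 = -\partial_i\Big(\textstyle\sum_{a=1}^2 \partial_a Q_{a,3}\Big)(0) = -\partial_i (\nabla \cdot Q_{H,3})(0).
$$

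It then remains to check that $\nabla \cdot Q_{H,3} \equiv 0$, which is where the solenoidality of the horizontal fields enters. Applying $\sum_a \partial_{x_a}$ to $Q_{a,3}(x-y) = \sum_k \sigma_k^a(x)\, \sigma_k^3(y)$ gives $\sum_a (\partial_a Q_{a,3})(x-y) = \sum_k (\nabla \cdot \sigma_k^H)(x)\, \sigma_k^3(y) = 0$, since each $\sigma_k^H$ is divergence free on $\T^2$. Hence $\nabla \cdot Q_{H,3}$ vanishes identically, so do its derivatives, and the correction vector is zero, completing the proof.

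I do not expect a genuine obstacle here: the argument is a short computation. The only point requiring care is the bookkeeping of the minus sign arising from differentiating $Q_{a,3}(x-y)$ in the second variable, together with the recognition that the essential structural input is the divergence-free condition $\nabla \cdot \sigma_k^H = 0$, inherited from the solenoidal nature of the vortex fields.
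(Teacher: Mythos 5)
Your proof is correct. Both your argument and the paper's reduce the lemma, via Lemma \ref{lem-I-J-expression}, to showing that $\sum_k (\partial_i \sigma_k^H)\cdot\nabla\sigma_k^3 = 0$ for $i=1,2$, and both rest on the same key identity $\sum_k \partial_i\sigma_k^a\,\partial_j\sigma_k^3 = -\partial_i\partial_j Q_{a,3}(0)$, obtained by differentiating the expansion \eqref{covariance-matrix-expansion} term by term (this is exactly \eqref{eq:covariance-derivative} in the paper, and your appeal to $C^m$-convergence of the series justifies it). Where you genuinely diverge is in how solenoidality is exploited. The paper substitutes $\partial_1\sigma_k^1 = -\partial_2\sigma_k^2$ inside the sum and then cancels two copies of the mixed partial $\partial_1\partial_2 Q_{2,3}(0)$ against each other; this cancellation implicitly uses the symmetry half of \eqref{eq:covariance-derivative}, i.e.\ the commutation of the partial derivatives of $Q$. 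You instead keep the sum over components intact, identify it as $-\partial_i(\nabla\cdot Q_{H,3})(0)$, and prove the stronger structural fact that $\nabla\cdot Q_{H,3}$ vanishes \emph{identically} on $\T^2$, by pushing the divergence in the $x$-variable through the series $Q_{a,3}(x-y)=\sum_k\sigma_k^a(x)\,\sigma_k^3(y)$ and using $\nabla\cdot\sigma_k^H=0$ (the same fact also follows directly from the convolution formula \eqref{covariance-functions}). Your route avoids the componentwise bookkeeping and the Schwarz-symmetry step, is dimension-independent, and isolates a reusable fact: solenoidality of the small-scale fields is inherited by the cross-covariance as the differential constraint $\nabla\cdot Q_{H,3}\equiv 0$. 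The paper's route, in exchange, is a purely pointwise computation at the origin that never needs to discuss $Q_{H,3}$ away from $0$. Both are complete and rigorous.
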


\begin{proof}
First of all, we prove a simple fact: let $Q=(Q_{i,j})_{1\le i,j \le 3}$ be the covariance matrix, then for any $i,j\in \{1,2,3\}$, $m,n \in \{1,2\}$, it holds
  \begin{equation}\label{eq:covariance-derivative}
  -\partial_m \partial_n Q_{i,j}(0) = \sum_k \partial_m \sigma_k^i(x)\, \partial_n \sigma_k^j(x)= \sum_k \partial_n \sigma_k^i(x)\, \partial_m \sigma_k^j(x).
  \end{equation}
Indeed, recalling that for $i,j  \in \{1,2, 3\}$,
  $$ Q_{i,j}(x-y)= \sum_k \sigma_k^i(x) \, \sigma_k^j(y), \quad x,y\in \T^2, $$
thus we have
  $$\partial_{x_m} \partial_{y_n} [Q_{i,j}(x-y)]= \sum_k \partial_{x_m} \sigma_k^i(x) \, \partial_{y_n} \sigma_k^j(y);$$
that is,
  $$-\partial_m \partial_n Q_{i,j}(x-y)= \sum_k \partial_m \sigma_k^i(x)\, \partial_n \sigma_k^j(y). $$
Letting $x=y$ gives the first identity. For the second one, we have
  $$\partial_{x_n} \partial_{y_m} [Q_{i,j}(x-y)]= \sum_k \partial_{x_n} \sigma_k^i(x) \, \partial_{y_m} \sigma_k^j(y);$$
from here we get the equality in the same way as above.

Now we start proving equality \eqref{eq:I=J}; recall that
  $$I= \sum_k (\sigma_k^H \cdot \nabla  \omega_{H} ) \cdot \nabla  \sigma_k^3 + \omega_{H} \cdot \begin{pmatrix}
  -\sum_k ( \partial_1 \sigma_k^H) \cdot\nabla  \sigma_k^3 \smallskip \\
  -\sum_k ( \partial_2 \sigma_k^H) \cdot\nabla  \sigma_k^3
  \end{pmatrix} ,$$
we have
  $$\aligned
  \sum_k ( \partial_1 \sigma_k^H) \cdot\nabla  \sigma_k^3 &= \sum_k \big(\partial_1 \sigma_k^1\, \partial_1 \sigma_k^3 + \partial_1 \sigma_k^2\, \partial_2 \sigma_k^3 \big)\\
  &= \sum_k \big(-\partial_2 \sigma_k^2\, \partial_1 \sigma_k^3 + \partial_1 \sigma_k^2\, \partial_2 \sigma_k^3 \big),
  \endaligned $$
where we have used $\partial_1 \sigma_k^1 + \partial_2 \sigma_k^2=0$. By the above identity \eqref{eq:covariance-derivative},
  $$\aligned
  \sum_k ( \partial_1 \sigma_k^H) \cdot\nabla  \sigma_k^3 &= \partial_1 \partial_2 Q_{2,3}(0) -  \partial_1 \partial_2 Q_{2,3}(0) =0.
  \endaligned$$
In the same way,
  $$\aligned
  \sum_k ( \partial_2 \sigma_k^H) \cdot\nabla  \sigma_k^3 &= \sum_k \big(\partial_2 \sigma_k^1\, \partial_1 \sigma_k^3 + \partial_2 \sigma_k^2\, \partial_2 \sigma_k^3 \big) \\
  &= \sum_k \big(\partial_2 \sigma_k^1\, \partial_1 \sigma_k^3 -\partial_1 \sigma_k^1\, \partial_2 \sigma_k^3 \big) \\
  &= -\partial_1 \partial_2 Q_{1,3}(0) + \partial_1 \partial_2 Q_{1,3}(0) =0.
  \endaligned $$
This implies \eqref{eq:I=J} and completes the proof of Lemma \ref{lem-I-J-expression-2}.
\end{proof}

Recall the constant matrix $\nabla Q_{H,3}(0)$ defined in \eqref{eq:nabla-Q-H-3}; we can further simplify $I=J$ as follows.

\begin{proposition}\label{prop-I-J-expression}
Let $Q_{H,3}(x)= \big(Q_{1,3}(x), Q_{2,3}(x)\big)^\ast\, (x\in \T^2)$ be a vector valued function. Then,
  $$I=J= - \nabla \cdot [\nabla Q_{H,3}(0)\, \omega_{H}].$$
\end{proposition}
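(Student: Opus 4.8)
The plan is to begin from the simplified form of $I=J$ already established in Lemma~\ref{lem-I-J-expression-2}, namely the identity \eqref{eq:I=J}, and to rewrite the sum $\sum_k (\sigma_k^H \cdot \nabla \omega_{H}) \cdot \nabla \sigma_k^3$ in components so that all dependence on the noise coefficients is isolated into sums of the form $\sum_k \sigma_k^j\, \partial_i \sigma_k^3$. Writing $\omega_{H}=(\omega_1,\omega_2)$, $\sigma_k^H=(\sigma_k^1,\sigma_k^2)$ and $\nabla \sigma_k^3=(\partial_1\sigma_k^3,\partial_2\sigma_k^3)$, I would expand
\begin{equation*}
(\sigma_k^H \cdot \nabla \omega_{H})\cdot \nabla \sigma_k^3 = \sum_{i,j} \sigma_k^j\,(\partial_j \omega_i)\,\partial_i \sigma_k^3,
\end{equation*}
and then sum over $k$, pulling the factor $\partial_j\omega_i$, which does not depend on $k$, outside the summation to obtain $I=\sum_{i,j}(\partial_j\omega_i)\big(\sum_k \sigma_k^j\,\partial_i\sigma_k^3\big)$.

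The key ingredient is the first-order analogue of the identity \eqref{eq:covariance-derivative}. Differentiating the component form $Q_{j,3}(x-y)=\sum_k \sigma_k^j(x)\,\sigma_k^3(y)$ with respect to $y_i$ gives $-\partial_i Q_{j,3}(x-y)=\sum_k \sigma_k^j(x)\,(\partial_i\sigma_k^3)(y)$; evaluating at $x=y$ yields
\begin{equation*}
\sum_k \sigma_k^j\,\partial_i \sigma_k^3 = -\partial_i Q_{j,3}(0).
\end{equation*}
Substituting this into the expression for $I$ from the first step produces $I=-\sum_{i,j}\partial_i Q_{j,3}(0)\,\partial_j\omega_i$, where each $\partial_i Q_{j,3}(0)$ is a constant by space-homogeneity of the covariance.

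It then remains only to recognize the right-hand side as $-\nabla\cdot[\nabla Q_{H,3}(0)\,\omega_{H}]$. Using the definition \eqref{eq:nabla-Q-H-3}, the $j$-th component of $\nabla Q_{H,3}(0)\,\omega_{H}$ is $\sum_i \partial_i Q_{j,3}(0)\,\omega_i$; taking the divergence and using that $\nabla Q_{H,3}(0)$ is constant gives $\nabla\cdot[\nabla Q_{H,3}(0)\,\omega_{H}]=\sum_{i,j}\partial_i Q_{j,3}(0)\,\partial_j\omega_i$, which is exactly $-I$, closing the identity. I do not expect any genuine obstacle here: the argument is pure bookkeeping. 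The only points requiring care are the single sign coming from differentiating $Q_{j,3}(x-y)$ in its second variable, and the consistent matching of the row/column indices of $\nabla Q_{H,3}(0)$ in \eqref{eq:nabla-Q-H-3} with the summation indices $i,j$ so that the divergence contracts against the correct component of $\omega_{H}$.
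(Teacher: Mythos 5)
Your proposal is correct and follows essentially the same route as the paper's own proof: expand $\sum_k (\sigma_k^H \cdot \nabla \omega_{H}) \cdot \nabla \sigma_k^3$ in components, use the space-homogeneity identity $\sum_k \sigma_k^j\,\partial_i \sigma_k^3 = -\partial_i Q_{j,3}(0)$ (the first-order analogue of \eqref{eq:covariance-derivative}, which is exactly the content of \eqref{eq:matrix-identity-1}), and then use the constancy of $\nabla Q_{H,3}(0)$ to rewrite the contraction as a divergence. The index bookkeeping, including the sign from differentiating $Q_{j,3}(x-y)$ in the second variable and the row/column convention of \eqref{eq:nabla-Q-H-3}, is handled correctly.
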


\begin{proof}
Recalling equality \eqref{eq:I=J}, we have
  $$\aligned
  I&=\sum_k \big[(\partial_1 \sigma_k^3)\, \sigma_k^H \cdot\nabla \omega_1 + (\partial_2 \sigma_k^3)\, \sigma_k^H \cdot\nabla \omega_2\big] \\
  &=\sum_k \begin{pmatrix}
  \sigma_k^1\partial_1 \sigma_k^3 \smallskip \\
  \sigma_k^2\partial_1 \sigma_k^3 \end{pmatrix} \cdot\nabla \omega_1
  + \sum_k \begin{pmatrix}
  \sigma_k^1\partial_2 \sigma_k^3 \smallskip \\
  \sigma_k^2\partial_2 \sigma_k^3 \end{pmatrix} \cdot\nabla \omega_2 \\
  &= -\begin{pmatrix}
  \partial_1 Q_{1,3}(0) \\ \partial_1 Q_{2,3}(0)
  \end{pmatrix} \cdot\nabla \omega_1
  - \begin{pmatrix}
  \partial_2 Q_{1,3}(0) \\ \partial_2 Q_{2,3}(0)
  \end{pmatrix} \cdot\nabla \omega_2 \\
  &= - \partial_1 Q_{H,3}(0) \cdot\nabla \omega_1 - \partial_2 Q_{H,3}(0) \cdot\nabla \omega_2.
  \endaligned $$
Then,
  $$\aligned
  I&= -\nabla \cdot\big[ \omega_1\, \partial_1 Q_{H,3}(0) \big] -\nabla \cdot\big[ \omega_2\, \partial_2 Q_{H,3}(0) \big] = - \nabla \cdot [\nabla Q_{H,3}(0)\, \omega_{H}],
  \endaligned $$
which finishes the proof.
\end{proof} \medskip

Combining \eqref{eq:omega-3}, \eqref{eq:Ito-corrector} and \eqref{eq:I=J}, we obtain
  \begin{equation}\label{eq:Ito-omega-3}
  \aligned
  d\omega_{3} + (v_{H} \cdot \nabla \omega_{3} - \nu\Delta  \omega_3)\, dt
  & = - \sum_k \big(\sigma_k^H \cdot \nabla \omega_{3} -\omega_{H}\cdot \nabla  \sigma_k^3 \big)\, dW^k_t\\
  &\quad + \big(\mathcal L \omega_3 + \nabla \cdot [\nabla Q_{H,3}(0)\, \omega_{H}] \big)\, dt,
  \endaligned
  \end{equation}
where $\nabla Q_{H,3} (0)$ and the operator $\mathcal L $ are defined, respectively, in \eqref{eq:nabla-Q-H-3} and \eqref{eq:corrector-L}.

\iffalse
  $$\aligned
  \mathcal L \omega_3 &= \frac12 \sum_k \sigma_k^H \cdot \nabla  (\sigma_k^H \cdot \nabla \omega_{3}) = \frac12 \sum_k \nabla  \cdot \big[(\sigma_k^H \cdot \nabla \omega_{3}) \sigma_k^H\big] \\
  &= \frac12 \sum_k \nabla  \cdot \big[(\sigma_k^H \otimes \sigma_k^H) \nabla \omega_{3} \big]= \frac12 \nabla  \cdot \big[Q_H(0) \nabla \omega_{3} \big] .
  \endaligned $$
\fi

\subsection{A priori estimates}\label{sec-a-priori}

This section is devoted to proving a priori estimates for the stochastic 2D-3C model \eqref{stoch-2D-3C-eq} (i.e. \eqref{eq:stoch-2D-3C-model}). First, as $v_3$ satisfies a stochastic advection-diffusion equation with divergence free fields $v_H$ and $\sigma_k^H$, one has $\P$-a.s.,
  \begin{equation}\label{eq:a-priori-v-3}
  \|v_3(t)\|_{L^2}^2 + 2\nu \int_0^t \|\nabla v_3(s)\|_{L^2}^2\, ds \le \|v_3^0 \|_{L^2}^2 \quad \mbox{for all } t\ge 0.
  \end{equation}

Next, we turn to dealing with the estimate on $\omega_3$; recall that $\|\cdot \|_M$ is a matrix norm.

\begin{lemma}\label{lem-a-priori-1}
We have
  \begin{equation}\label{lem-a-priori-1.1}
  \E\bigg[\sup_{t\in [0,T]} \|\omega_3(t)\|_{L^2}^2 + \nu \int_0^T \|\nabla \omega_3(s)\|_{L^2}^2\, ds \bigg] \le C_{\nu, Q} \big(\|v_3^0 \|_{L^2}^2 + \|\omega_3^0 \|_{L^2}^2\big),
  \end{equation}
where $C_{\nu, Q}$ is a constant depending on $\nu, \|\nabla Q_{H,3}(0)\|_M, \|\nabla^2 Q_3(0)\|_M$.
\end{lemma}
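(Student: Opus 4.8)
The plan is to run an $L^2$ energy estimate on the $\omega_3$-equation in its It\^o form \eqref{eq:Ito-omega-3}, using the already-established bound \eqref{eq:a-priori-v-3} on $v_3$ to absorb the contributions coming from the stretching and AKA terms. Since the estimate is needed as an input for the Galerkin/tightness scheme, I would perform the computation on the finite-dimensional Galerkin approximations, where all objects are smooth and the classical It\^o formula applies, and then pass to the limit by lower semicontinuity of the norms; I suppress the projections below and write $\E$ for the expectation.

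First I would apply It\^o's formula to $t\mapsto\|\omega_3(t)\|_{L^2}^2$. The transport term drops, $\<\omega_3,v_H\cdot\nabla\omega_3\>=0$, because $v_H$ is divergence free, and the viscous term yields the good dissipation $-2\nu\|\nabla\omega_3\|_{L^2}^2$. Writing $\mathcal L$ as in \eqref{eq:corrector-L} and expanding the quadratic variation of the martingale $-\sum_k(\sigma_k^H\cdot\nabla\omega_3-\omega_H\cdot\nabla\sigma_k^3)\,dW^k$, the pure transport-noise contributions cancel exactly, since integration by parts gives $2\<\omega_3,\mathcal L\omega_3\>=-\sum_k\|\sigma_k^H\cdot\nabla\omega_3\|_{L^2}^2$. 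What survives is
\begin{align*}
\frac{d}{dt}\E\|\omega_3\|_{L^2}^2+2\nu\|\nabla\omega_3\|_{L^2}^2
&=-2\big\<\nabla\omega_3,\nabla Q_{H,3}(0)\,\omega_H\big\>
-2\sum_k\big\<\sigma_k^H\cdot\nabla\omega_3,\omega_H\cdot\nabla\sigma_k^3\big\>\\
&\quad+\sum_k\|\omega_H\cdot\nabla\sigma_k^3\|_{L^2}^2 .
\end{align*}
The decisive point is that all three right-hand terms involve only $\omega_H=\nabla^\perp v_3$, i.e. first derivatives of $v_3$, which is exactly what \eqref{eq:a-priori-v-3} controls.

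Next I would bound these terms. From \eqref{eq:covariance-derivative} one has the pointwise identity $\sum_k\nabla\sigma_k^3\otimes\nabla\sigma_k^3=-\nabla^2Q_3(0)$, so $\sum_k\|\omega_H\cdot\nabla\sigma_k^3\|_{L^2}^2=\int_{\T^2}\omega_H\cdot(-\nabla^2Q_3(0))\omega_H\le\|\nabla^2Q_3(0)\|_M\|\omega_H\|_{L^2}^2$, while $\sum_k\|\sigma_k^H\cdot\nabla\omega_3\|_{L^2}^2=\<\nabla\omega_3,Q_H(0)\nabla\omega_3\>$. Cauchy--Schwarz (in $k$ and in $x$) together with Young's inequality then bound the first two, genuinely coupling, terms by $\frac\nu2\|\nabla\omega_3\|_{L^2}^2$ plus a multiple of $\|\omega_H\|_{L^2}^2$, the constant depending on $\|\nabla Q_{H,3}(0)\|_M$ and $\|\nabla^2Q_3(0)\|_M$ (the $\|Q_H(0)\|_M$ entering the Young step being a fixed constant). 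Absorbing the two $\frac\nu2\|\nabla\omega_3\|_{L^2}^2$ into the dissipation leaves
\[
\frac{d}{dt}\E\|\omega_3\|_{L^2}^2+\nu\|\nabla\omega_3\|_{L^2}^2\le C_{\nu,Q}\,\|\omega_H\|_{L^2}^2=C_{\nu,Q}\,\|\nabla v_3\|_{L^2}^2 .
\]
(I note that the first two terms in fact cancel exactly, via \eqref{eq:covariance-derivative} and Proposition \ref{prop-I-J-expression}, but the cruder Young bound already suffices and reproduces the stated dependence of $C_{\nu,Q}$.) Because no $\|\omega_3\|_{L^2}^2$ survives on the right, no Gr\"onwall loop in $\omega_3$ is needed: integrating in time and invoking \eqref{eq:a-priori-v-3}, which gives $\int_0^T\|\nabla v_3\|_{L^2}^2\,ds\le\frac1{2\nu}\|v_3^0\|_{L^2}^2$ pathwise, already closes the $\E\|\omega_3\|^2$ and $\E\int\|\nabla\omega_3\|^2$ estimates.

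Finally, to reach the expected supremum in \eqref{lem-a-priori-1.1}, I would take $\sup_{t\in[0,T]}$ before expectations and treat the martingale with the Burkholder--Davis--Gundy inequality. Since $\<\omega_3,\sigma_k^H\cdot\nabla\omega_3\>=0$, the martingale reduces to $2\sum_k\int_0^t\<\omega_3,\omega_H\cdot\nabla\sigma_k^3\>\,dW^k$, whose bracket is bounded, again via \eqref{eq:covariance-derivative}, by $4\|\nabla^2Q_3(0)\|_M\big(\sup_{t\le T}\|\omega_3\|_{L^2}^2\big)\int_0^T\|\omega_H\|_{L^2}^2\,ds$. BDG followed by Young then produces a term $\tfrac12\E\sup_{t\le T}\|\omega_3\|_{L^2}^2$, absorbed into the left, plus a multiple of $\E\int_0^T\|\nabla v_3\|_{L^2}^2\,ds$, controlled once more by \eqref{eq:a-priori-v-3}; collecting everything yields \eqref{lem-a-priori-1.1}. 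The main obstacle, and the reason the $v_3$-equation must be estimated first, is precisely that the $\omega_3$-equation is not self-contained: the random stretching term forces the right-hand side through $\omega_H=\nabla^\perp v_3$, and only the independent bound \eqref{eq:a-priori-v-3} makes that forcing a priori integrable.
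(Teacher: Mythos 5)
Your proof is correct, and its skeleton coincides with the paper's: an $L^2$ energy estimate for $\omega_3$ in which every dangerous term is rewritten, via the covariance identities \eqref{eq:matrix-identity-1} and \eqref{eq:matrix-identity-2}, in terms of $\nabla Q_{H,3}(0)$ and $\nabla^2 Q_3(0)$ acting on $\omega_H=\nabla^\perp v_3$; the $\|\nabla\omega_3\|_{L^2}^2$ contributions are absorbed into the viscosity; the surviving forcing $\|\nabla v_3\|_{L^2}^2$ is integrated by the pathwise bound \eqref{eq:a-priori-v-3} (so indeed no Gr\"onwall loop is needed); and the supremum is obtained from BDG plus Young exactly as in the paper, which carries out this step on the actual solution with stopping times $\tau_n$ and Fatou, whereas you delegate the justification to the Galerkin level (either way, some localization is needed before taking expectations, which you leave implicit). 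The one genuine difference lies in how the It\^o correction is organized: the paper runs the identity on the Stratonovich form and recomputes the cross-variations $d\big[\<\nabla\omega_3\cdot\nabla^\perp v_3,\sigma_k^3\>,W^k\big]_t$ using both equations, arriving at the three terms of \eqref{eq:quadratic-varia}, two of which are estimated by Young's inequality at the cost of $\nu\|\nabla\omega_3\|_{L^2}^2$ of dissipation and a constant $\nu^{-1}\|\nabla Q_{H,3}(0)\|_M^2$; you instead start from the already-derived It\^o form \eqref{eq:Ito-omega-3}, where the transport part of the quadratic variation cancels $\mathcal L$, and you correctly observe that the cross term of the quadratic variation cancels the corrector $\nabla\cdot[\nabla Q_{H,3}(0)\,\omega_H]$ \emph{exactly}: by \eqref{eq:matrix-identity-1} these two terms equal $+2\<\nabla\omega_3,\nabla Q_{H,3}(0)\,\omega_H\>$ and $-2\<\nabla\omega_3,\nabla Q_{H,3}(0)\,\omega_H\>$ respectively. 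Your route is in fact slightly sharper: it retains the full $2\nu$ dissipation and produces a constant depending only on $\nu$ and $\|\nabla^2 Q_3(0)\|_M$. One small caveat: your fallback bound on the cross term (Cauchy--Schwarz in $k$ and $x$ plus Young) makes the constant depend additionally on $\|Q_H(0)\|_M$, which is not among the dependencies stated in the lemma; so in a final write-up you should either invoke the exact cancellation you noted, or first convert the cross term through \eqref{eq:matrix-identity-1} and only then apply Young, as the paper effectively does.
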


\begin{proof}
In the sequel we will write $\nabla^\perp v_3$ instead of $\omega_{H}$, in order to make use of the equation for $v_3$. By the first equation in \eqref{stoch-2D-3C-eq}, we have
  $$\aligned
  d \left\Vert \omega_{3}\right\Vert _{L^{2}}^{2}+2\nu\left\Vert \nabla\omega_{3}\right\Vert _{L^{2}}^{2} dt
  &= -2\sum_{k} \< \nabla \omega_{3} \cdot\nabla^\perp v_3, \sigma_k^3 \> \circ dW^k_t.
  \endaligned$$
Transforming in It\^o differential yields
  \begin{equation}\label{eq:Ito}
  \aligned
  d \left\Vert \omega_{3}\right\Vert _{L^{2}}^{2}+2\nu\left\Vert \nabla\omega_{3}\right\Vert _{L^{2}}^{2} dt
  &= -2\sum_k \< \nabla \omega_{3} \cdot\nabla^{\perp}v_{3}, \sigma_k^3 \> \, dW^k_t \\
  &\quad - \sum_k d\big[\< \nabla \omega_{3} \cdot\nabla^{\perp}v_{3}, \sigma_k^3 \>, W^k \big]_t.
  \endaligned
  \end{equation}
We have
  $$d\< \nabla\omega_{3} \cdot\nabla^{\perp}v_{3}, \sigma_k^3 \> = \big\< \nabla(d\omega_{3} ) \cdot\nabla^{\perp}v_{3}, \sigma_k^3 \big\> + \big\< \nabla \omega_{3} \cdot\nabla^{\perp}(dv_{3}), \sigma_k^3 \big\>, $$
and, using the equations in \eqref{stoch-2D-3C-eq} for $\omega_{3}$ and $v_3$,
$$\aligned
  d\< \nabla \omega_{3} \cdot\nabla^{\perp}v_{3}, \sigma_k^3 \> &= \tilde V\, dt - \sum_l \big\< \nabla (\sigma_l^H \cdot \nabla \omega_{3} ) \cdot\nabla^{\perp}v_{3}, \sigma_k^3 \big\>\circ dW^l_t \\
  &\quad + \sum_l \big\< \nabla (\nabla^{\perp}v_{3}\cdot \nabla \sigma_l^3 ) \cdot\nabla^{\perp}v_{3}, \sigma_k^3 \big\>\circ dW^l_t \\
  &\quad - \sum_l \big\< \nabla \omega_{3} \cdot\nabla^{\perp}(\sigma_l^H \cdot\nabla v_{3} ), \sigma_k^3 \big\> \circ dW^l_t ,
  \endaligned $$
where $\tilde V\, dt$ is the finite variation part; as a result,
  $$\aligned
  -d\big[\< \nabla \omega_{3} \cdot\nabla^{\perp}v_{3}, \sigma_k^3 \>, W^k \big]_t &= \big\< \nabla (\sigma_k^H \cdot \nabla \omega_{3} ) \cdot\nabla^{\perp}v_{3}, \sigma_k^3 \big\> \, dt \\
  &\quad -\big\< \nabla (\nabla^{\perp}v_{3}\cdot \nabla \sigma_k^3 ) \cdot\nabla^{\perp}v_{3}, \sigma_k^3 \big\> \,d t \\
  &\quad +\big\< \nabla \omega_{3} \cdot\nabla^{\perp}(\sigma_k^H \cdot\nabla v_{3} ), \sigma_k^3 \big\>\,d t.
  \endaligned $$
Integration by parts:
  \begin{equation}\label{eq:quadratic-varia}
  \aligned
  -d\big[\< \nabla \omega_{3} \cdot\nabla^{\perp}v_{3}, \sigma_k^3 \>, W^k \big]_t &=  -\big\< (\sigma_k^H \cdot \nabla \omega_{3} ) \nabla^{\perp}v_{3}, \nabla \sigma_k^3 \big\> \, dt \\
  &\quad + \big\<(\nabla^{\perp}v_{3}\cdot \nabla \sigma_k^3 ) \nabla^{\perp}v_{3},  \nabla \sigma_k^3 \big\> \,d t \\
  &\quad - \big\< (\nabla \omega_{3})  (\sigma_k^H \cdot\nabla v_{3} ), \nabla^{\perp} \sigma_k^3 \big\>\,d t.
  \endaligned
  \end{equation}

Let us consider the first term on the right-hand side:
  $$\aligned
  \big\< (\sigma_k^H \cdot \nabla \omega_{3} ) \nabla^{\perp}v_{3}, \nabla \sigma_k^3 \big\>
  &= \int (\sigma_k^H \cdot \nabla \omega_{3} ) (\nabla^{\perp}v_{3}\cdot \nabla \sigma_k^3)\, d x \\
  &= \int (\nabla \omega_{3})^\ast \sigma_k^H (\nabla \sigma_k^3)^\ast \nabla^{\perp}v_{3} \, d x .
  \endaligned $$
We have
  \begin{equation}\label{eq:matrix-identity-1}
  \aligned
  \sum_k \sigma_k^H (\nabla \sigma_k^3)^\ast &= \sum_k \begin{pmatrix}
  \sigma_k^1 \partial_1 \sigma_k^3 &\ \sigma_k^1 \partial_2 \sigma_k^3 \smallskip \\
  \sigma_k^2 \partial_1 \sigma_k^3 &\ \sigma_k^2 \partial_2 \sigma_k^3
  \end{pmatrix}
  = - \begin{pmatrix}
  \partial_1 Q_{1,3}(0) &\ \partial_2 Q_{1,3}(0)\smallskip \\
  \partial_1 Q_{2,3}(0) &\ \partial_2 Q_{2,3}(0)
  \end{pmatrix}
  = -\nabla Q_{H,3}(0);
  \endaligned
  \end{equation}
as a result,
  $$\aligned
  - \sum_k \big\< (\sigma_k^H \cdot \nabla \omega_{3} ) \nabla_{H}^{\perp}v_{3}, \nabla \sigma_k^3 \big\>
  &= \int (\nabla \omega_{3})^\ast \nabla Q_{H,3}(0) \nabla^{\perp}v_{3} \, d x \\
  &\le \|\nabla Q_{H,3}(0)\|_M \|\nabla \omega_{3} \|_{L^2} \|\nabla v_{3} \|_{L^2} \\
  &\le \frac\nu 2 \|\nabla \omega_{3} \|_{L^2}^2 + \frac{\|\nabla Q_{H,3}(0)\|_M^2}{2\nu} \|\nabla v_{3} \|_{L^2}^2.
  \endaligned $$
The third term in \eqref{eq:quadratic-varia} can be estimated in the same way, thus
  \begin{equation}\label{eq:estimate-1-3}
  \aligned
  & -\sum_k \big\< (\sigma_k^H \cdot \nabla \omega_{3} ) \nabla^{\perp}v_{3}, \nabla \sigma_k^3 \big\> - \sum_k \big\< (\nabla \omega_{3})  (\sigma_k^H \cdot\nabla v_{3} ), \nabla^{\perp} \sigma_k^3 \big\> \\
  &\quad \le \nu \|\nabla \omega_{3} \|_{L^2}^2 + \nu^{-1} \|\nabla Q_{H,3}(0)\|_M^2 \|\nabla v_{3} \|_{L^2}^2.
  \endaligned
  \end{equation}

Next, for the second term in \eqref{eq:quadratic-varia}, we have
  \begin{equation} \label{eq:useful-identity}
  \sum_k \big\<(\nabla^{\perp}v_{3}\cdot \nabla \sigma_k^3 ) \nabla^{\perp}v_{3},  \nabla \sigma_k^3 \big\> = \sum_k \big\| \nabla^{\perp}v_{3}\cdot \nabla \sigma_k^3 \big\|_{L^2}^2.
  \end{equation}
Recall that
  $$Q_3(x-y)= Q_{3,3}(x-y) = \sum_k \sigma_k^3(x)\, \sigma_k^3(y),\quad x,y\in \T^2;$$
by the computations at the beginning of the proof of Lemma \ref{lem-I-J-expression-2}, it holds
  $$- \partial_j \partial_i Q_3(0)= \sum_k \partial_i\sigma_k^3(x)\, \partial_j\sigma_k^3(x).$$
In matrix form, it reads as
  \begin{equation}\label{eq:matrix-identity-2}
  -\nabla^2 Q_3(0)= \sum_k \nabla \sigma_k^3(x)\, (\nabla \sigma_k^3(x))^\ast.
  \end{equation}
Therefore,
  $$\aligned
  \sum_k \big\| \nabla^{\perp}v_{3}\cdot \nabla \sigma_k^3 \big\|_{L^2}^2 &=  \sum_k \int \big( \nabla^{\perp}v_{3}\cdot \nabla \sigma_k^3 \big)^2\, dx \\
  &= \sum_k \int (\nabla^{\perp}v_{3})^\ast\, (\nabla \sigma_k^3) \,(\nabla \sigma_k^3)^\ast (\nabla^{\perp}v_{3}) \, dx \\
  &= - \int (\nabla^{\perp}v_{3})^\ast \, \nabla^2 Q_3(0)\, (\nabla^{\perp}v_{3}) \, dx \\
  &= - \big\<\nabla^{\perp}v_{3}, \nabla^2 Q_3(0)\, (\nabla^{\perp}v_{3}) \big\>.
  \endaligned $$
As a result, by \eqref{eq:useful-identity},
  \begin{equation} \label{eq:useful-identity.1}
  \aligned
  \sum_k \big\<(\nabla^{\perp}v_{3}\cdot \nabla \sigma_k^3 ) \nabla^{\perp}v_{3},  \nabla \sigma_k^3 \big\>
  &= - \big\<\nabla^{\perp}v_{3}, \nabla^2 Q_3(0)\, (\nabla^{\perp}v_{3}) \big\> \\
  &\le \|\nabla^2 Q_3(0)\|_M \|\nabla v_{3}\|_{L^2}^2.
  \endaligned
  \end{equation}

Combining the above estimate with \eqref{eq:quadratic-varia} and \eqref{eq:estimate-1-3}, we arrive at
  $$\aligned
  & - \sum_k d\big[\< \nabla \omega_{3} \cdot\nabla^{\perp}v_{3}, \sigma_k^3 \>, W^k \big]_t \le \nu \|\nabla \omega_{3} \|_{L^2}^2\, dt + \tilde C_{\nu,Q} \|\nabla v_{3}\|_{L^2}^2\,d t ,
  \endaligned $$
where $\tilde C_{\nu,Q}$ is a constant defined as
  $$\tilde C_{\nu,Q} = \nu^{-1} \|\nabla Q_{H,3}(0)\|_M^2 + \|\nabla^2 Q_3(0)\|_M.$$
Substituting this estimate into \eqref{eq:Ito} and noticing that $\< \nabla \omega_{3} \cdot\nabla^{\perp}v_{3},\sigma_k^3 \> = - \< \omega_{3},\nabla^{\perp}v_{3} \cdot \nabla \sigma_k^3 \>$, we arrive at
  \begin{equation}\label{eq:a-priori-omega-3}
  \aligned
  d \| \omega_3 \|_{L^2}^2+ \nu\| \nabla\omega_3 \|_{L^2}^2\, dt
  & \le 2\sum_k \< \omega_{3},\nabla^{\perp} v_3 \cdot \nabla \sigma_k^3 \> \, dW^k_t + \tilde C_{\nu,Q} \|\nabla v_{3}\|_{L^2}^2 \, dt.
  \endaligned
  \end{equation}
Let $M(t)$ be the martingale part and define the stopping times $\tau_n = \inf\{t\ge 0: \| \omega_3(t) \|_{L^2} \ge n\},\, n\ge 1$; then
  $$\aligned
  \E \bigg[ \sup_{t\in [0,T]} \| \omega_3(t\wedge \tau_n) \|_{L^2}^2 \bigg] & \le \| \omega_3^0 \|_{L^2}^2 + \E\bigg[ \sup_{t\in [0,T]} | M(t\wedge \tau_n)| \bigg] + \tilde C_{\nu,Q} \int_0^T \|\nabla v_3(t) \|_{L^2}^2 \, dt \\
  &\le \| \omega_3^0 \|_{L^2}^2 + \E\bigg[ \sup_{t\in [0,T]} | M(t\wedge \tau_n)| \bigg] +\tilde C_{\nu,Q}\, \nu^{-1} \| v_3^0 \|_{L^2}^2,
  \endaligned $$
where the second step is due to \eqref{eq:a-priori-v-3}. We have
  $$\aligned \E\bigg[ \sup_{t\in [0,T]} | M(t\wedge \tau_n)| \bigg]
  &\lesssim \E \bigg[\Big( \sum_k \int_0^{T\wedge \tau_n} \big\<\omega_3(t), \nabla^{\perp} v_3(t) \cdot\nabla \sigma_k^3 \big\>^2 \, dt \Big)^{1/2} \bigg] \\
  &\le \E \bigg[\Big( \int_0^{T\wedge \tau_n} \| \omega_3(t)\|_{L^2}^2 \sum_k \big\| \nabla^{\perp} v_3(t) \cdot\nabla \sigma_k^3 \big\|_{L^2}^2 \,dt \Big)^{1/2} \bigg] \\
  &\le C_Q \, \E \bigg[\Big( \int_0^{T\wedge \tau_n} \| \omega_3(t)\|_{L^2}^2 \| \nabla v_3 (t) \|_{L^2}^2 \,dt \Big)^{1/2} \bigg],
  \endaligned $$
where the last step follows from \eqref{eq:useful-identity.1} and $C_Q= \|\nabla^2 Q_3(0)\|_M^{1/2}$. Therefore, by \eqref{eq:a-priori-v-3} and Cauchy's inequality,
  $$\aligned \E\bigg[ \sup_{t\in [0,T]} | M(t\wedge \tau_n)| \bigg]
  &\le C_Q \, \E \bigg[\sup_{t\in [0,T]} \| \omega_3(t\wedge \tau_n)\|_{L^2} \Big( \int_0^{T} \| \nabla v_3 (t) \|_{L^2}^2 \,dt \Big)^{1/2} \bigg] \\
  &\le C_Q\, \E \bigg[ \nu^{-1/2} \| v_3^0 \|_{L^2} \sup_{t\in [0,T]} \| \omega_3(t\wedge \tau_n)\|_{L^2} \bigg] \\
  &\le \frac12\, \E \bigg[ \sup_{t\in [0,T]} \| \omega_3(t\wedge \tau_n)\|_{L^2}^2 \bigg]+ \frac{ C_Q^2}{2\nu} \| v_3^0 \|_{L^2}^2.
  \endaligned$$
Combining the above estimates, we obtain
  $$\aligned
  \E \bigg[ \sup_{t\in [0,T]} \| \omega_3(t\wedge \tau_n) \|_{L^2}^2 \bigg] &\le \hat C_{\nu,Q} \big(\| v_3^0 \|_{L^2}^2+ \| \omega_3^0 \|_{L^2}^2\big).
  \endaligned $$
By Fatou's lemma, letting $n\to\infty$ yields the first estimate. The estimate on $\E \int_0^T \| \nabla\omega_3(t) \|_{L^2}^{2} dt$ follows easily from \eqref{eq:a-priori-omega-3}.
\end{proof}

In order to apply the compactness method for proving existence of weak solutions, we need also to establish the following estimates.

\begin{lemma}\label{lem-a-priori-2}
Fix some $\alpha\in (0,1/2)$. It holds that
  $$\E\bigg[\int_0^T\!\! \int_0^T \frac{\|v_3(t) -v_3(s)\|_{H^{-2}}^2}{|t-s|^{1+2\alpha}} \,dtds + \int_0^T\!\! \int_0^T \frac{\|\omega_3(t) -\omega_3(s)\|_{H^{-2}}^2}{|t-s|^{1+2\alpha}} \,dtds \bigg] \le \tilde C<+\infty. $$
\end{lemma}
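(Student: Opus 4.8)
The plan is to invoke the standard fractional-in-time compactness criterion: both $v_3$ and $\omega_3$ split into an absolutely continuous drift plus a martingale, and for $\alpha<1/2$ the Gagliardo seminorm in $W^{\alpha,2}(0,T;H^{-2})$ is finite in expectation. Recall from the discussion below Definition \ref{sect-5-definition} the decompositions $v_3(t)=v_3^0+V_v(t)+M_v(t)$ and $\omega_3(t)=\omega_3^0+V_\omega(t)+M_\omega(t)$; the constants drop out of the increments. I would reduce everything to a single increment inequality
\[
\E\|u(t)-u(s)\|_{H^{-2}}^2\le C_T\int_{s\wedge t}^{s\vee t}\Psi_u(r)\,dr,\qquad u\in\{v_3,\omega_3\},
\]
for suitable densities $\Psi_u\in L^1(0,T)$ with $\int_0^T\Psi_u\le C$, and then close by a Fubini computation.

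\emph{Drift part.} For the finite-variation terms, Cauchy--Schwarz gives $\|V_u(t)-V_u(s)\|_{H^{-2}}^2\le|t-s|\int_{s\wedge t}^{s\vee t}\|\partial_r V_u\|_{H^{-2}}^2\,dr\le T\int_{s\wedge t}^{s\vee t}\|\partial_r V_u\|_{H^{-2}}^2\,dr$, so it suffices to check $\E\int_0^T\|\partial_r V_u\|_{H^{-2}}^2\,dr<\infty$. Each summand is handled exactly as in the paragraph below Definition \ref{sect-5-definition}: the transport terms $v_H\cdot\nabla v_3$ and $v_H\cdot\nabla\omega_3$ lie in $H^{-1}$ with norm $\lesssim\|\omega_3\|_{L^\infty L^2}\,\|\cdot\|_{H^1}$; the viscous and corrector terms $\nu\Delta$ and $\mathcal L$ are second order, hence bounded $L^2\to H^{-2}$; and the first-order AKA term $\nabla\cdot[\nabla Q_{H,3}(0)\,\omega_H]=\nabla\cdot[\nabla Q_{H,3}(0)\,\nabla^\perp v_3]$ is bounded in $H^{-1}$ by $\|\nabla Q_{H,3}(0)\|_M\,\|\nabla v_3\|_{L^2}$. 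Squaring, integrating in time and taking expectation, all of these are controlled by the a priori bounds \eqref{thm:well-posedness.1} and \eqref{thm:well-posedness.2}.

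\emph{Martingale part.} By the It\^o isometry in $H^{-2}$, for $s<t$,
\[
\E\|M_u(t)-M_u(s)\|_{H^{-2}}^2=\E\int_s^t\sum_k\|g_k^u(r)\|_{H^{-2}}^2\,dr,
\]
with $g_k^{v_3}=\sigma_k^H\cdot\nabla v_3$ and $g_k^{\omega_3}=\sigma_k^H\cdot\nabla\omega_3-\omega_H\cdot\nabla\sigma_k^3$. Writing $\sigma_k^H\cdot\nabla f=\nabla\cdot(\sigma_k^H f)$, using $\|\nabla\cdot(\cdot)\|_{H^{-2}}\le\|\cdot\|_{L^2}$ and the pointwise identity $\sum_k|\sigma_k^H|^2={\rm Tr}(Q_H(0))$ (take $x=y$ in the horizontal block of \eqref{covariance-matrix-expansion}) gives $\sum_k\|\sigma_k^H\cdot\nabla f\|_{H^{-2}}^2\le{\rm Tr}(Q_H(0))\,\|f\|_{L^2}^2$. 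For the stretching term, $\omega_H=\nabla^\perp v_3$ together with \eqref{eq:matrix-identity-2}, namely $\sum_k\nabla\sigma_k^3(\nabla\sigma_k^3)^\ast=-\nabla^2 Q_3(0)$, yields $\sum_k\|\omega_H\cdot\nabla\sigma_k^3\|_{L^2}^2\le\|\nabla^2 Q_3(0)\|_M\,\|\nabla v_3\|_{L^2}^2$. Hence $\sum_k\|g_k^{v_3}\|_{H^{-2}}^2\lesssim\|v_3\|_{L^2}^2$ and $\sum_k\|g_k^{\omega_3}\|_{H^{-2}}^2\lesssim\|\omega_3\|_{L^2}^2+\|\nabla v_3\|_{L^2}^2$; integrating against \eqref{thm:well-posedness.1}--\eqref{thm:well-posedness.2} these define the $L^1(0,T)$ densities entering $\Psi_u$.

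\emph{Fubini and conclusion.} Summing the two contributions gives the increment inequality with $\Psi_u\in L^1(0,T)$ and $\int_0^T\Psi_u\le C$. Inserting it into the seminorm and exchanging the order of integration,
\[
\E\int_0^T\!\!\int_0^T\frac{\|u(t)-u(s)\|_{H^{-2}}^2}{|t-s|^{1+2\alpha}}\,dt\,ds\le C_T\int_0^T\Psi_u(r)\,\Phi(r)\,dr,\quad \Phi(r)=\int_0^T\!\!\int_0^T\frac{\mathbf{1}_{\{s\wedge t<r<s\vee t\}}}{|t-s|^{1+2\alpha}}\,dt\,ds.
\]
Integrating first the variable lying on the far side of $r$ (so the singularity $t=s$ is never reached), one finds $\Phi(r)\le \frac{T^{1-2\alpha}}{\alpha(1-2\alpha)}$ uniformly in $r$, finite precisely because $2\alpha<1$; this gives the asserted bound with $\tilde C<\infty$. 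I expect the main obstacle to be the stretching contribution $\|\nabla v_3\|_{L^2}^2$ to $\Psi_{\omega_3}$, which is only $L^1$ in time (not bounded), so a crude increment estimate $\E\|u(t)-u(s)\|^2\le C|t-s|$ is unavailable; the whole design of keeping the time-integral form and passing the weight of the singular kernel into the uniformly bounded $\Phi$ is precisely what makes an $L^1$ density sufficient.
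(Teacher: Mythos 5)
The gap is in your drift estimate for the $\omega_3$ equation, specifically the transport term. You bound $\|v_H\cdot\nabla\omega_3\|_{H^{-1}}\lesssim\|\omega_3\|_{L^\infty L^2}\,\|\omega_3\|_{H^1}$ and then assert that, after squaring, integrating in time and taking expectation, this is ``controlled by the a priori bounds \eqref{thm:well-posedness.1} and \eqref{thm:well-posedness.2}''. It is not: what you need is
$\E\big[\|\omega_3\|_{L^\infty L^2}^2\int_0^T\|\omega_3(r)\|_{H^1}^2\,dr\big]<\infty$,
the expectation of a \emph{product} of two random variables, whereas \eqref{thm:well-posedness.2} only controls the first moment of their \emph{sum}; $\E[XY]$ is not controlled by $\E X+\E Y$. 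The difficulty is intrinsic: the nonlinearity is quadratic in $\omega_3$, so its squared negative-order norm is quartic, and no choice of norm removes the need for fourth-moment information on $\omega_3$ (note this problem does not arise for the $v_3$ equation, where the factor $\|v_3\|_{L^2}$ is bounded \emph{pathwise} by \eqref{thm:well-posedness.1}). This is exactly why the paper proves the additional Lemma \ref{lem-a-priori-fourth-order}, $\E\|\omega_3(t)\|_{L^2}^4\le \hat C$, via It\^o's formula for $\|\omega_3\|_{L^2}^4$ (closed using the pathwise bound on $v_3$), and pairs it with the duality estimate $\|v_H\cdot\nabla\omega_3\|_{H^{-2}}\lesssim\|v_H\,\omega_3\|_{L^{3/2}}\lesssim\|\omega_3\|_{L^2}^2$, which involves no gradient of $\omega_3$. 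With that lemma your scheme closes (the corresponding contribution to $\Psi_{\omega_3}$ becomes $\E\|\omega_3(r)\|_{L^2}^4$, bounded uniformly in $r$); without it, your drift estimate does not follow from the bounds you invoke.

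The rest of your argument is correct and, for the martingale part, genuinely different from the paper's. The paper rewrites the stretching term as $\nabla^\perp v_3\cdot\nabla\sigma_k^3=\nabla^\perp\cdot\big(v_3\,\nabla\sigma_k^3\big)$, so that in $H^{-2}$ it is controlled by $\|v_3\|_{L^2}^2$ (pathwise bounded); this yields the crude increment bound $\E\|\omega_3(t)-\omega_3(s)\|_{H^{-2}}^2\le C|t-s|$, after which the Gagliardo double integral converges trivially since $2\alpha<1$. You instead keep the bound $\sum_k\|\omega_H\cdot\nabla\sigma_k^3\|_{L^2}^2\le\|\nabla^2Q_3(0)\|_M\|\nabla v_3\|_{L^2}^2$, which is only $L^1$ in time, and compensate by the Fubini computation with the kernel $\Phi(r)\le T^{1-2\alpha}/(\alpha(1-2\alpha))$; that computation is correct and is a legitimate (indeed more flexible) alternative, as it shows an $L^1$-in-time density suffices for the $W^{\alpha,2}(0,T;H^{-2})$ seminorm. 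But this flexibility does not rescue the nonlinear term, where the missing fourth-moment lemma remains essential.
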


\begin{proof}
Using the equation for $v_3$, it is not difficult to show that
  \begin{equation}\label{lem-a-priori-2.1}
  \E\big(\|v_3(t) -v_3(s)\|_{H^{-1}}^2 \big) \le C_{\nu, Q, T} (t-s) \big(1+ \| \omega_3^0 \|_{L^2}^4+ \| v_3^0 \|_{L^2}^4\big).
  \end{equation}
We omit the proof here since they are easier than that of the estimate on $\omega_3$ given below.

By the equation \eqref{eq:Ito-omega-3} for $\omega_3$, we have
  $$\aligned
  \omega_3(t) -\omega_3(s) &= -\int_s^t (v_H \cdot \nabla \omega_3)(r)\, dr + \int_s^t \big[\nu\Delta \omega_3 + \mathcal L \omega_3 + \nabla \cdot (\nabla Q_{H,3}(0)\, \omega_H)\big](r)\, dr \\
  &\quad -\sum_k\int_s^t \big(\sigma_k^H \cdot \nabla \omega_3 -\omega_H\cdot \nabla\sigma_k^3 \big)(r)\, dW^k_r;
  \endaligned $$
therefore,
  $$\aligned
  \|\omega_3(t) -\omega_3(s)\|_{H^{-2}}^2 &\lesssim I_1+ I_2 +I_3,
  \endaligned $$
where
  $$\aligned
  I_1 &\le (t-s) \int_s^t \|(v_H \cdot \nabla \omega_3)(r) \|_{H^{-2}}^2 \, dr , \\
  I_2 &\le (t-s) \int_s^t \big\|\big[\nu\Delta \omega_3 + \mathcal L \omega_3 + \nabla \cdot (\nabla Q_{H,3}(0)\, \omega_H)\big](r) \big\|_{H^{-2}}^2 \, dr , \\
  I_3 &\le \bigg\| \sum_k\int_s^t \big(\sigma_k^H \cdot \nabla \omega_3 -\omega_H\cdot \nabla\sigma_k^3 \big)(r)\, dW^k_r \bigg\|_{H^{-2}}^2.
  \endaligned $$

First, note that $v_H$ is a divergence free vector field on $\T^2$, it holds
  $$\|(v_H \cdot \nabla \omega_3)(r) \|_{H^{-2}} \lesssim \|v_H(r)\, \omega_3(r)\|_{H^{-1}} \lesssim \|v_H(r)\, \omega_3(r)\|_{L^{3/2}} \le \|v_H(r) \|_{L^6} \|\omega_3(r)\|_{L^2}, $$
where in the last two steps we have used the embedding $L^{3/2} \hookrightarrow H^{-1}$ and H\"older's inequality. Using the Sobolev embedding $H^1 \hookrightarrow L^6$, we arrive at
  $$\|(v_H \cdot \nabla \omega_3)(r) \|_{H^{-2}} \lesssim \|v_H(r) \|_{H^1} \|\omega_3(r)\|_{L^2} \lesssim \|\omega_3(r)\|_{L^2}^2 $$
since $v_H(r) =K\ast \omega_3(r)$ where $K$ is the Biot-Savart kernel on $\T^2$. Recalling the definition of $I_1$, we arrive at
  $$\E I_1 \lesssim (t-s) \int_s^t \E\big(\|\omega_3(r)\|_{L^2}^4 \big) \, dr \lesssim_{\nu, Q, T} (t-s)^2 \big( \|\omega_3^0 \|_{L^2}^4 + \|v_3^0 \|_{L^2}^4 \big), $$
where in the second step we have used Lemma \ref{lem-a-priori-fourth-order} at the end of this section.

Next, for $I_2$, recalling that $\mathcal L\omega_3= \frac12 \nabla \cdot \big[Q_H(0) \nabla \omega_3 \big]$  (see \eqref{eq:corrector-L}) and $\omega_H =\nabla^\perp v_3$, we have
  $$\aligned
  I_2 &\lesssim (t-s) \int_s^t \Big[ \nu^2 \|\omega_3(r) \|_{L^2}^2 + \|Q_H(0) \nabla \omega_3(r) \|_{H^{-1}}^2 + \big\| \nabla Q_{H,3}(0) \nabla^\perp v_3(r) \big\|_{H^{-1}}^2 \Big] \, dr \\
  &\lesssim_{\nu, Q} (t-s) \int_s^t \big[ \|\omega_3(r) \|_{L^2}^2 + \|v_3(r) \|_{L^2}^2 \big] \, dr ;
  \endaligned $$
therefore, by Lemma \ref{lem-a-priori-1} and \eqref{eq:a-priori-v-3}, it holds
  $$\E I_2 \lesssim_{\nu, Q, T} (t-s)^2 \big( \|\omega_3^0 \|_{L^2}^2 + \|v_3^0 \|_{L^2}^2 \big).  $$

Finally, we have
  $$\aligned
  \E I_3 &\lesssim \E \bigg[ \sum_k\int_s^t \big\| \sigma_k^H \cdot \nabla \omega_3(r) -\nabla^\perp v_3(r)\cdot \nabla\sigma_k^3 \big\|_{H^{-2}}^2\, d r \bigg] \\
  &\lesssim \E \bigg[ \sum_k\int_s^t \Big( \big\| \sigma_k^H \cdot \nabla \omega_3(r) \big\|_{H^{-2}}^2 +\big\| \nabla^\perp v_3(r)\cdot \nabla\sigma_k^3 \big\|_{H^{-2}}^2 \Big)\, d r \bigg].
  \endaligned $$
As $\{\sigma_k^H \}_k$ are divergence free, it holds
  $$\sum_k \big\| \sigma_k^H \cdot \nabla \omega_3(r) \big\|_{H^{-2}}^2 \lesssim \sum_k \big\| \sigma_k^H \omega_3(r) \big\|_{H^{-1}}^2 \lesssim \sum_k \big\| \sigma_k^H \omega_3(r) \big\|_{L^2}^2 \le {\rm Tr}(Q_H(0)) \| \omega_3(r) \|_{L^2}^2; $$
moreover,
  $$\aligned
  \sum_k \big\| \nabla^\perp v_3(r)\cdot \nabla\sigma_k^3 \big\|_{H^{-2}}^2
  &= \sum_k \big\| \nabla^\perp \cdot \big( v_3(r) \nabla\sigma_k^3 \big) \big\|_{H^{-2}}^2 \lesssim \sum_k \big\| v_3(r) \nabla\sigma_k^3 \big\|_{L^2}^2 \\
  &\le \|\nabla^2 Q_3(0)\|_M \|v_3(r) \|_{L^2}^2 ,
  \endaligned $$
where in the last step we have used similar derivations of \eqref{eq:useful-identity.1}. Substituting these estimates into the inequality above, we obtain
  $$\E I_3 \lesssim_Q \E \int_s^t \big(\| \omega_3(r) \|_{L^2}^2 + \| v_3(r) \|_{L^2}^2 \big)\, dr \lesssim_{\nu, Q, T} (t-s) \big( \|\omega_3^0 \|_{L^2}^2 + \|v_3^0 \|_{L^2}^2 \big). $$

Summarizing the above estimates on $I_1, I_2$ and $I_3$, we arrive at
  $$\E\big(\|\omega_3(t) -\omega_3(s)\|_{H^{-2}}^2 \big) \le  C_{\nu, Q, T} (t-s) \big(1+ \| \omega_3^0 \|_{L^2}^4+ \| v_3^0 \|_{L^2}^4\big).$$
Combining this estimate with \eqref{lem-a-priori-2.1}, we immediately obtain the desired result.
\end{proof}

In the estimate of $I_1$ in the above proof, we have used the following result.

\begin{lemma}\label{lem-a-priori-fourth-order}
One has, for all $t\in [0,T]$,
  $$\E\big(\|\omega_3(t)\|_{L^2}^4 \big) \le \hat C_{\nu, Q} \big(\|\omega_3^0 \|_{L^2}^4 + \|v_3^0 \|_{L^2}^4\big). $$
\end{lemma}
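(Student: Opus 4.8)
The plan is to apply the It\^o formula to the fourth power $\|\omega_3(t)\|_{L^2}^4 = \big(\|\omega_3(t)\|_{L^2}^2\big)^2$ and then to reduce the resulting fourth-moment bound to the second-moment estimate already established in Lemma \ref{lem-a-priori-1}. Write $X_t = \|\omega_3(t)\|_{L^2}^2$ and recall from \eqref{eq:Ito} that $X_t$ solves an exact It\^o equation $dX_t = \big(-2\nu\|\nabla\omega_3\|_{L^2}^2 + R_t\big)\,dt + dM_t$, where the martingale part is $dM_t = 2\sum_k \<\omega_3, \nabla^{\perp} v_3\cdot\nabla\sigma_k^3\>\,dW_t^k$ and the finite-variation correction $R_t = -\sum_k d\big[\<\nabla\omega_3\cdot\nabla^\perp v_3,\sigma_k^3\>,W^k\big]_t/dt$ was shown, in the proof of Lemma \ref{lem-a-priori-1}, to satisfy $R_t \le \nu\|\nabla\omega_3\|_{L^2}^2 + \tilde C_{\nu,Q}\|\nabla v_3\|_{L^2}^2$.

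First I would compute $dX_t^2 = 2X_t\,dX_t + d[M]_t$. Since $X_t\ge 0$, the bound on $R_t$ shows that the finite-variation part of $dX_t$ is at most $\big(-\nu\|\nabla\omega_3\|_{L^2}^2 + \tilde C_{\nu,Q}\|\nabla v_3\|_{L^2}^2\big)\,dt$, so after discarding the negative dissipation contribution $2X_t$ times it is bounded by $2\tilde C_{\nu,Q}\,X_t\|\nabla v_3\|_{L^2}^2\,dt$. For the quadratic variation, $d[M]_t/dt = 4\sum_k\<\omega_3,\nabla^{\perp} v_3\cdot\nabla\sigma_k^3\>^2$; by Cauchy--Schwarz and \eqref{eq:useful-identity.1} this is at most $4 C_Q^2\, X_t\|\nabla v_3\|_{L^2}^2$ with $C_Q^2 = \|\nabla^2 Q_3(0)\|_M$. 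Hence
\[
dX_t^2 \le (2\tilde C_{\nu,Q} + 4C_Q^2)\, X_t\|\nabla v_3\|_{L^2}^2\,dt + 2X_t\,dM_t .
\]
To take expectations I would localize along $\tau_n = \inf\{t: \|\omega_3(t)\|_{L^2}\ge n\}$, so that the stopped stochastic integral is a genuine martingale of zero mean, yielding $\E[X_{t\wedge\tau_n}^2] \le \|\omega_3^0\|_{L^2}^4 + C\,\E\int_0^{t\wedge\tau_n} X_s\|\nabla v_3(s)\|_{L^2}^2\,ds$.

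The key step, and the only one requiring care, is the control of the cross term $\E\int_0^t X_s\|\nabla v_3(s)\|_{L^2}^2\,ds$. Rather than attempting a Gr\"onwall argument (the weight $\|\nabla v_3\|_{L^2}^2$ is only integrable in time, not bounded), I would exploit that the energy inequality \eqref{eq:a-priori-v-3} furnishes the \emph{pathwise}, deterministic bound $\int_0^T\|\nabla v_3(s)\|_{L^2}^2\,ds \le \frac{1}{2\nu}\|v_3^0\|_{L^2}^2$. Factoring out the supremum gives
\[
\E\int_0^t X_s\|\nabla v_3(s)\|_{L^2}^2\,ds \le \frac{1}{2\nu}\|v_3^0\|_{L^2}^2\, \E\Big[\sup_{s\le T}\|\omega_3(s)\|_{L^2}^2\Big],
\]
and the supremum on the right is exactly what Lemma \ref{lem-a-priori-1} bounds by $C_{\nu,Q}\big(\|v_3^0\|_{L^2}^2 + \|\omega_3^0\|_{L^2}^2\big)$. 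Thus the fourth moment is controlled by the product $\|v_3^0\|_{L^2}^2\big(\|v_3^0\|_{L^2}^2 + \|\omega_3^0\|_{L^2}^2\big)$, which Young's inequality absorbs into $\hat C_{\nu,Q}\big(\|\omega_3^0\|_{L^2}^4 + \|v_3^0\|_{L^2}^4\big)$. Finally I would let $n\to\infty$ and apply Fatou's lemma to remove the localization and obtain the claimed bound, uniformly in $t\in[0,T]$.
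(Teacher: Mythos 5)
Your proof is correct and follows essentially the same route as the paper: It\^o's formula applied to $\|\omega_3\|_{L^2}^4$, the quadratic-variation bound via Cauchy--Schwarz and \eqref{eq:useful-identity.1}, localization by the stopping times $\tau_n$, the key factorization of the cross term $\E\int_0^t \|\omega_3\|_{L^2}^2\|\nabla v_3\|_{L^2}^2\,ds$ through the pathwise bound \eqref{eq:a-priori-v-3} and the supremum estimate of Lemma \ref{lem-a-priori-1}, and finally Young's inequality and Fatou's lemma. The only cosmetic difference is that you work directly from the exact It\^o equation \eqref{eq:Ito} and re-invoke the drift bound from the proof of Lemma \ref{lem-a-priori-1}, whereas the paper starts from the already-assembled inequality \eqref{eq:a-priori-omega-3}; the content is identical.
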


\begin{proof}
By \eqref{eq:a-priori-omega-3} and It\^o's formula, we have
  $$\aligned
  d \|\omega_3 \|_{L^2}^4 &= 2\|\omega_3 \|_{L^2}^2\, d \|\omega_3 \|_{L^2}^2 + d\big[\|\omega_3 \|_{L^2}^2, \|\omega_3 \|_{L^2}^2 \big]_t \\
  &\le 2\|\omega_3 \|_{L^2}^2 \big(-\nu \|\nabla \omega_3\|_{L^2}^2 + C_{\nu,Q} \|\nabla v_3\|_{L^2}^2\big)\, dt + 4 \sum_k \< \omega_{3},\nabla^{\perp}v_{3} \cdot \nabla \sigma_k^3 \>^2\, dt\\
  &\quad +4 \|\omega_3 \|_{L^2}^2 \sum_k \< \omega_{3},\nabla^{\perp}v_{3} \cdot \nabla \sigma_k^3 \>\, d W^k_t,
  \endaligned $$
where $C_{\nu,Q}= \nu^{-1} \|\nabla Q_{H,3}(0)\|_M^2 + \|\nabla^2 Q_3(0)\|_M$. By Cauchy's inequality,
  $$\sum_k \< \omega_3,\nabla^{\perp}v_{3} \cdot \nabla \sigma_k^3 \>^2 \le \|\omega_3 \|_{L^2}^2 \sum_k \|\nabla^{\perp}v_{3} \cdot \nabla \sigma_k^3 \|_{L^2}^2 $$
where the sum on the right-hand side is the same as \eqref{eq:useful-identity}, and thus by \eqref{eq:useful-identity.1},
  $$\sum_k \< \omega_3,\nabla^{\perp}v_{3} \cdot \nabla \sigma_k^3 \>^2 \le \|\omega_3 \|_{L^2}^2 \|\nabla^2 Q_3(0)\|_M \|\nabla v_{3}\|_{L^2}^2. $$
Substituting this estimate into the inequality above gives us
  \begin{equation}\label{lem-a-priori-fourth-order.1}
  d \|\omega_3 \|_{L^2}^4\le \tilde C_{\nu,Q} \|\omega_3 \|_{L^2}^2 \|\nabla v_3\|_{L^2}^2\, dt +4 \|\omega_3 \|_{L^2}^2 \sum_k \< \omega_{3},\nabla^{\perp}v_{3} \cdot \nabla \sigma_k^3 \>\, d W^k_t.
  \end{equation}
Introduce the stopping time $\tau_n= \inf\{t\ge 0: \|\omega_3(t) \|_{L^2} \ge n\}$; we have
  $$\aligned
  & \E \int_0^{t\wedge \tau_n} \|\omega_3(s) \|_{L^2}^4 \sum_k \< \omega_3(s),\nabla^{\perp}v_3(s) \cdot \nabla \sigma_k^3 \>^2 \, d s \\
  & \le n^6 \|\nabla^2 Q_3(0)\|_M\, \E \int_0^{t\wedge \tau_n} \|\nabla v_3(s) \|_{L^2}^2 \, ds\\
  &\le n^6 \|\nabla^2 Q_3(0)\|_M\, \nu^{-1} \|v_3^0 \|_{L^2}^2,
  \endaligned $$
where the last step follows from \eqref{eq:a-priori-v-3}. This implies the last term in \eqref{lem-a-priori-fourth-order.1} is a locally square integrable martingale, thus we have
  $$\aligned
  \E\big(\|\omega_3(t\wedge \tau_n) \|_{L^2}^4\big) &\le \|\omega_3^0 \|_{L^2}^4 + \tilde C_{\nu,Q}\, \E \int_0^{t\wedge \tau_n} \|\omega_3(s) \|_{L^2}^2 \|\nabla v_3(s)\|_{L^2}^2\, ds \\
  &\le \|\omega_3^0 \|_{L^2}^4 + \tilde C_{\nu,Q}\, \E \bigg[\sup_{s\in [0,T]} \|\omega_3(s) \|_{L^2}^2 \int_0^t \|\nabla v_3(s) \|_{L^2}^2\, ds \bigg] \\
  &\le \|\omega_3^0 \|_{L^2}^4 + \tilde C_{\nu,Q}\, \nu^{-1} \|v_3^0 \|_{L^2}^2\, \E \bigg[\sup_{s\in [0,T]} \|\omega_3(s) \|_{L^2}^2 \bigg],
  \endaligned $$
which, combined with Lemma \ref{lem-a-priori-1}, gives us
  $$\E\big(\|\omega_3(t\wedge \tau_n) \|_{L^2}^4\big) \le \hat C_{\nu,Q} \big(\|\omega_3^0 \|_{L^2}^4 + \|v_3^0 \|_{L^2}^4\big). $$
By Fatou's lemma, letting $n\to \infty$ we finish the proof.
\end{proof}

\subsection{Proof of Theorem \ref{thm:well-posedness}} \label{subsec-proof-well-posedness}

This subsection is devoted to the proof of Theorem \ref{thm:well-posedness}. To show the existence of weak solutions with desired regularity, we may adopt the classical methods of Galerkin approximation and compactness arguments. Thanks to the a priori estimates in Section \ref{sec-a-priori}, the proof is quite standard and can be found in many references, see for instance \cite{FG95, FGL21a} and also \cite[Section 3]{FGL21b}. Therefore, we omit the details of the proof of existence part, and concentrate on the pathwise uniqueness.

\begin{proof}[Proof of Theorem \ref{thm:well-posedness}: pathwise uniqueness]
Let $(\tilde\omega_3, \tilde v_3)$ and $(\bar\omega_3, \bar v_3)$ be two solutions to \eqref{eq:stoch-2D-3C-model-Ito} defined on the same probability space $\Omega$ corresponding to the same initial data and Brownian motions $\{W^k \}_k$, such that $\tilde\omega_3$ and $\bar\omega_3$ (resp. $\tilde v_3$ and $\bar v_3$) fulfill the estimate \eqref{thm:well-posedness.2} (resp.  \eqref{thm:well-posedness.1}). According to the discussions below Definition \ref{sect-5-definition}, we have the decompositions
  $$\omega_3:= \tilde \omega_3- \bar\omega_3= V_{\tilde \omega_3} - V_{\bar\omega_3} + M_{\tilde \omega_3}- M_{\bar\omega_3} , \quad v_3:= \tilde v_3-\bar v_3= V_{\tilde v_3} - V_{\bar v_3} + M_{\tilde v_3}- M_{\bar v_3}, $$
where $V_{\tilde \omega_3}, V_{\bar\omega_3}$ and $V_{\tilde v_3}, V_{\bar v_3}$ take values in $W^{1,2}(0,T; H^{-1})$, while $M_{\tilde \omega_3}, M_{\bar\omega_3}$ and $M_{\tilde v_3}, M_{\bar v_3}$ are $L^2$-valued continuous local martingales. One can check that the conditions of \cite[Theorem 2.13]{RL18} are verified for $\omega_3$ and $v_3$, and thus the It\^o formula therein is applicable.

More precisely, letting ($K$ is the Biot-Savart kernel on $\T^2$)
  $$\aligned
  v_H &= \tilde v_H - \bar v_H= K\ast(\tilde \omega_3 -\bar\omega_3)= K\ast \omega_3,\\
  \omega_H &= \tilde \omega_H -\bar\omega_H = \nabla^\perp(\tilde v_3- \bar v_3) = \nabla^\perp v_3,
  \endaligned $$
then we have
  $$\aligned
  d \omega_3 &= \big(-v_H\cdot\nabla \tilde \omega_3 - \bar v_H\cdot\nabla \omega_3 + \nu \Delta \omega_3 + \mathcal L \omega_3 + \nabla\cdot[\nabla Q_{H,3}(0)\, \omega_H] \big)\, dt\\
  &\quad - \sum_k \big(\sigma_k^H \cdot\nabla \omega_3 - \omega_H \cdot\nabla \sigma_k^3\big) \, dW^k_t ,\\
  d v_3 &= \big(-v_H\cdot\nabla \tilde v_3 - \bar v_H\cdot\nabla v_3 + \nu \Delta v_3 + \mathcal L v_3 \big)\, dt - \sum_k \sigma_k^H \cdot\nabla v_3 \, dW^k_t.
  \endaligned$$
By the It\^o formula in \cite[Theorem 2.13]{RL18},
  \begin{equation*}
  \aligned
  d\| \omega_3\|_{L^2}^2 &= -2\< \omega_3, v_H\cdot\nabla \tilde \omega_3\>\, dt -2\nu \|\nabla \omega_3\|_{L^2}^2\, dt - \<\nabla \omega_3, Q_H(0) \nabla \omega_3\>\, dt\\
  &\quad -2\<\nabla \omega_3,\nabla Q_{H,3}(0)\, \omega_H\>\, dt + 2 \sum_k \< \omega_3, \omega_H\cdot\nabla \sigma_k^3 \>\, d W^k_t \\
  &\quad + \sum_k \big\|\sigma_k^H \cdot\nabla \omega_3 - \omega_H \cdot\nabla \sigma_k^3 \big\|_{L^2}^2\, dt,
  \endaligned
  \end{equation*}
where we have used $\< \omega_3, \bar v_H\cdot\nabla \omega_3\> =0 = \big\< \omega_3, \sigma_k^H \cdot\nabla \omega_3 \big\> $ since $\bar v_H$ and $\sigma_k^H$ are divergence free. It is easy to know that
  $$\aligned
  I&:= \sum_k \big\|\sigma_k^H \cdot\nabla \omega_3 - \omega_H \cdot\nabla \sigma_k^3 \big\|_{L^2}^2 \\
  &= \sum_k \Big(\big\|\sigma_k^H \cdot\nabla \omega_3 \big\|_{L^2}^2 +\big\| \omega_H \cdot\nabla \sigma_k^3 \big\|_{L^2}^2 - 2\big\<\sigma_k^H \cdot\nabla \omega_3, \omega_H \cdot\nabla \sigma_k^3\big> \Big) \\
  &= \<\nabla \omega_3, Q_H(0) \nabla \omega_3\> -\< \omega_H, \nabla^2 Q_3(0)\, \omega_H\> + 2 \<\nabla \omega_3, \nabla Q_{H,3}(0)\, \omega_H\> ,
  \endaligned $$
where in the last step we have used \eqref{eq:matrix-identity-1} and \eqref{eq:matrix-identity-2}; therefore, the identity above reduces to
  \begin{equation}\label{eq:Ito-omega}
  \aligned
  d\| \omega_3\|_{L^2}^2 &= -2\< \omega_3, v_H\cdot\nabla \tilde \omega_3\>\, dt -2\nu \|\nabla \omega_3\|_{L^2}^2\, dt \\
  &\quad + 2 \sum_k \< \omega_3, \omega_H\cdot\nabla \sigma_k^3 \>\, d W^k_t -\< \omega_H, \nabla^2 Q_3(0)\, \omega_H\> \, dt.
  \endaligned
  \end{equation}
In the same way,
  \begin{equation}\label{eq:Ito-v}
  d\| v_3\|_{L^2}^2 = -2\< v_3, v_H\cdot\nabla \tilde v_3\>\, dt -2\nu \|\nabla v_3\|_{L^2}^2\, dt.
  \end{equation}

The estimate of the first term on the right-hand side of \eqref{eq:Ito-omega} is quite standard: recalling that $ v_H=K\ast \omega_3$, we have
  \begin{equation}\label{eq:nonlinearity}
  \aligned
  |\< \omega_3, v_H\cdot\nabla \tilde \omega_3\>| &\le \|\omega_3\|_{L^2} \|v_H\|_{L^\infty} \|\nabla \tilde \omega_3\|_{L^2} \le \|\omega_3\|_{L^2} \|K\ast \omega_3\|_{H^2} \|\nabla \tilde \omega_3\|_{L^2} \\
  &\le \|\omega_3\|_{L^2} \| \omega_3\|_{H^1} \|\nabla \tilde \omega_3\|_{L^2} \le \frac\nu 2 \|\nabla \omega_3\|_{L^2}^2 + \frac1{2\nu} \|\omega_3\|_{L^2}^2 \|\nabla \tilde \omega_3\|_{L^2}^2,
  \endaligned
  \end{equation}
as a result,
  \begin{equation*}
  \aligned
  d\| \omega_3\|_{L^2}^2 + \nu \|\nabla \omega_3\|_{L^2}^2\, dt &\le \nu^{-1} \| \omega_3\|_{L^2}^2 \|\nabla \tilde \omega_3\|_{L^2}^2\, dt -\< \omega_H, \nabla^2 Q_3(0)\, \omega_H\> \, dt \\
  &\quad + 2 \sum_k \< \omega_3, \omega_H\cdot\nabla \sigma_k^3 \>\, d W^k_t.
  \endaligned
  \end{equation*}
Similar calculations yield
  $$ d\| v_3\|_{L^2}^2 + 2\nu \|\nabla v_3\|_{L^2}^2\, dt \le \nu \|\nabla \omega_3\|_{L^2}^2\, dt + \nu^{-1} \| v_3\|_{L^2}^2 \|\nabla \tilde v_3\|_{L^2}^2\, dt .$$

Taking the sum of the two estimates above, we arrive at
  $$\aligned
  d\big(\| \omega_3\|_{L^2}^2 + \| v_3\|_{L^2}^2\big) &\le - \big(2\nu \|\nabla v_3\|_{L^2}^2+\< \omega_H, \nabla^2 Q_3(0)\, \omega_H\> \big) \, dt + 2 \sum_k \< \omega_3, \omega_H\cdot\nabla \sigma_k^3 \>\, d W^k_t \\
  &\quad + \nu^{-1} \big(\|\nabla \tilde \omega_3\|_{L^2}^2 + \|\nabla \tilde v_3\|_{L^2}^2\big) \big(\| \omega_3\|_{L^2}^2 + \|v_3\|_{L^2}^2\big)\, dt .
  \endaligned $$
Recall that we assume $\|\nabla^2 Q_3(0)\|_M \le 2\nu$; hence,
  $$|\< \omega_H, \nabla^2 Q_3(0)\, \omega_H\>| \le \| \omega_H\|_{L^2} \|\nabla^2 Q_3(0)\, \omega_H\|_{L^2} \le 2\nu \| \omega_H\|_{L^2}^2 = 2\nu \|\nabla v_3 \|_{L^2}^2, $$
thus, denoting by $M_t$ the martingale part, we obtain
  \begin{equation}\label{uniqueness}
  d\big(\| \omega_3\|_{L^2}^2 + \| v_3\|_{L^2}^2\big) \le dM_t + \nu^{-1} \big(\|\nabla \tilde \omega_3\|_{L^2}^2 + \|\nabla \tilde v_3\|_{L^2}^2\big) \big(\| \omega_3\|_{L^2}^2 + \|v_3\|_{L^2}^2\big)\, dt.
  \end{equation}
We have, by It\^o's isometry and Cauchy's inequality,
  $$\aligned
  \E [M]_t &= 4\, \E \bigg[\sum_k \int_0^t \big\< \omega_3(s), \omega_H(s) \cdot\nabla \sigma_k^3 \big\>^2 \, ds \bigg]\\
  &\le 4\,\E \bigg[\sum_k \int_0^t \| \omega_3(s)\|_{L^2}^2 \big\| \omega_H(s) \cdot\nabla \sigma_k^3 \big\|_{L^2}^2 \, ds \bigg] \\
  &\le 8\nu\, \E \bigg[ \int_0^t \| \omega_3(s)\|_{L^2}^2 \| \omega_H(s) \|_{L^2}^2 \, ds \bigg],
  \endaligned $$
where we have used \eqref{eq:useful-identity.1} and the bound $\|\nabla^2 Q_3(0) \|_M \le 2\nu$. Note that $ \omega_H = \nabla^\perp v_3 = \nabla^\perp \tilde v_3 -\nabla^\perp \bar v_3$, by \eqref{thm:well-posedness.1}, we have
  $$\aligned
  \E [M]_t &\le 8\nu \, \E \bigg[ \bigg( \sup_{s\in [0,T]} \| \omega_3(s)\|_{L^2}^2 \bigg) \int_0^T \| \nabla^\perp v_3(s) \|_{L^2}^2 \, ds \bigg] \\
  &\le C \| v_3^0 \|_{L^2}^2\, \E \bigg( \sup_{s\in [0,T]} \|\omega_3(s)\|_{L^2}^2 \bigg) \\
  &\le C_{\nu,Q} \big(\| v_3^0 \|_{L^2}^2+ \| \omega_3^0 \|_{L^2}^2\big)^2,
  \endaligned $$
where in the third step we have used \eqref{thm:well-posedness.2}. This implies that $M_t$ is a square integrable martingale. Define a positive and decreasing process
  $$\rho_t := \exp\bigg[-\nu^{-1} \int_0^t \big(\|\nabla \tilde \omega_3(s)\|_{L^2}^2 + \|\nabla \tilde v_3(s) \|_{L^2}^2\big)\, ds\bigg], \quad t\ge 0;$$
then $d\rho_t = -\nu^{-1} \rho_t \big(\|\nabla \tilde \omega_3(t)\|_{L^2}^2 + \|\nabla \tilde v_3(t) \|_{L^2}^2\big)\, dt$. By \eqref{uniqueness} and It\^o's formula,
  $$d \big[\rho_t \big(\| \omega_3\|_{L^2}^2 + \|v_3\|_{L^2}^2\big) \big] \le \rho_t \, dM_t. $$
Integrating from $0$ to $t$ and taking expectation lead to
  $$\E\big[\rho_t \big(\|\omega_3(t) \|_{L^2}^2 + \|v_3(t) \|_{L^2}^2\big) \big] \le \E \int_0^t \rho_s \, dM_s =0, $$
where in the last step we used the fact that $\big\{ \int_0^t \rho_s \, dM_s \big\}_t$ is a martingale since $0< \rho_t \le 1$ and $\{M_t\}_t$ is a square integrable martingale. We conclude that $\|\omega_3(t) \|_{L^2} = \|v_3(t) \|_{L^2} \equiv 0$ and the proof is finished.
\end{proof}

\section{Scaling limit and proof of Theorem \ref{thm:Boussinesq-general-result}}

In this part we first show that, under Hypothesis \ref{basic-hypotheses} in Section \ref{subsec-scaling}, the stochastic 2D-3C models \eqref{eq:Ito-system-ell} with the noise $W^\ell$ converge weakly to deterministic model; then we prove the technical results in Proposition \ref{prop:limit-matrix}.

\subsection{Proof of Theorem \ref{thm:Boussinesq-general-result}} \label{sect-proof-scaling-limit}

We first recall the setting: we are given covariance functions $\{Q^\ell\}_{\ell\in (0,1)}$ satisfying Hypothesis \ref{basic-hypotheses}, and $\{\mathbb Q^\ell \}_{\ell\in (0,1)}$ are the corresponding covariance operators; let $W^\ell$ be a $\mathbb Q^\ell$-Wiener process with the expression
  $$W^\ell(t,x)= \sum_k \sigma^{\ell,H}_k(x) W^k_t + \sum_k \sigma^{\ell,3}_k(x) W^k_t e_3, $$
where $\sigma^{\ell,H}_k= \big(\sigma^{\ell,1}_k, \sigma^{\ell,2}_k \big)$ are divergence free vector fields on $\T^2$, and $\{W^k \}_k$ are independent standard Brownian motions. For any $\ell\in (0,1)$, by Theorem \ref{thm:well-posedness}, the following system
  \begin{equation}\label{eq:Ito-ell}
  \left\{ \aligned
  d\omega^\ell_{3} + v^\ell_H \cdot \nabla \omega^\ell_{3} \, dt &= - \sum_k \big(\sigma^{\ell,H}_k \cdot \nabla \omega^\ell_{3} - \omega^\ell_H\cdot \nabla \sigma^{\ell,3}_k \big)\, dW^k_t \\
  &\quad + \big(\nu \Delta +\mathcal L^\ell\big)\omega^\ell_3 \, dt + \nabla\cdot \big[\nabla Q_{H,3}^{\ell} (0)\, \omega^\ell_H \big] \, dt,\\
  dv^\ell_3+ v^\ell_H \cdot \nabla v^\ell_3 \, dt &= - \sum_k \sigma^{\ell,H}_k \cdot\nabla v^\ell_{3} \, dW^k_t + \big(\nu \Delta +\mathcal L^\ell \big) \omega^\ell_3 \, dt
  \endaligned \right.
  \end{equation}
has a probabilistic weak solution $\big(\omega^\ell_3, v^\ell_3\big)$ satisfying the estimates:
  \begin{align}
  & \P \mbox{-a.s. for all } t\in[0,T], \quad \big\|v^\ell_3(t) \big\|_{L^2}^2 + 2\nu \int_0^t \big\|\nabla v^\ell_3(s) \big\|_{L^2}^2\, ds \le \|v_3(0) \|_{L^2}^2, \label{estimates-v-ell} \\
  & \E\bigg[\sup_{t\in [0,T]} \big\|\omega^\ell_3(t) \big\|_{L^2}^2 + \nu \int_0^T \big\|\nabla \omega^\ell_3(s) \big\|_{L^2}^2\, ds \bigg] \le C_{\nu, Q} \big(\|v_3(0) \|_{L^2}^2 + \|\omega_3(0) \|_{L^2}^2\big), \label{estimates-omega-ell}
  \end{align}
where $C_{\nu, Q}$ is a constant depending on $\nu,\, \sup_{\ell\in (0,1)} \|\nabla Q^\ell_{H,3}(0)\|_M$ and $\sup_{\ell\in (0,1)} \|\nabla^2 Q^\ell_3(0)\|_M$, which are finite by Hypothesis \ref{basic-hypotheses}. We point out that, since we are dealing with weak solutions, the probability and expectation should be written more accurately as $\P^\ell$ and $\E^\ell$, but we omit such dependence for simplicity of notation. Finally, using the equations in \eqref{eq:Ito-ell}, we can also obtain similar estimates as in Lemma \ref{lem-a-priori-2}.

Let $\eta^\ell$ be the law of $\big(\omega^\ell_3, v^\ell_3\big)$, $\ell\in (0,1)$; then the above arguments show that $\{\eta^\ell \}_\ell$ is bounded in probability in $L^2(0,T; H^1) \cap W^{\alpha, 2} (0,T; H^{-2})$, the latter being compactly embedded in $L^2(0,T; L^2)$. Therefore, the family of laws $\{\eta^\ell \}_\ell$ is tight on $L^2(0,T; L^2)$. Then, by the Prohorov theorem, we can find a subsequence (not relabelled for simplicity) of  $\{\eta^\ell \}_\ell$ converging weakly, in the topology of $L^2(0,T; L^2)$, to some probability measure $\eta$. Moreover, by Skorohod's representation theorem, there exists a new probability space $\big(\hat\Omega, \hat{\mathcal F}, \hat\P \big)$, a sequence of processes $\big(\hat \omega^\ell_3, \hat v^\ell_3\big)$ and a limit process $(\hat \omega_3, \hat v_3)$ defined on $\hat\Omega$, such that
\begin{itemize}
\item $(\hat \omega_3, \hat v_3) \stackrel{d}{\sim} \eta$ and $\big(\hat \omega^\ell_3, \hat v^\ell_3\big) \stackrel{d}{\sim} \eta^\ell$ for any $\ell\in (0,1)$;
\item $\hat\P$-a.s., $\big(\hat \omega^\ell_3, \hat v^\ell_3\big)$ converges as $\ell\to 0$ to $(\hat \omega_3, \hat v_3)$ in the topology of $L^2(0,T; L^2)$.
\end{itemize}
We remark that the family $ \big\{ \big(\hat \omega^\ell_3, \hat v^\ell_3\big) \big\}_\ell$ fulfills the same estimates as \eqref{estimates-v-ell} and \eqref{estimates-omega-ell} with respect to $\hat \P$ and $\hat \E$, respectively; thus, up to a further subsequence, $\big(\hat \omega^\ell_3, \hat v^\ell_3\big)$ converges weakly in $L^2\big(\Omega, C([0,T]; L^2)\cap L^2(0,T; H^1)\big)$ to $(\hat \omega_3, \hat v_3)$; in particular, the limit processes $(\hat \omega_3, \hat v_3)$ enjoy the same bounds. Meanwhile, we can also obtain the existence of a family of Brownian motions $\big\{ \big(\hat W^{\ell,k} \big)_{k\ge 1} \big\}_{\ell\in (0,1)}$ on $\hat \Omega$, such that for any $\ell\in (0,1)$, $\big(\hat \omega^\ell_3, \hat v^\ell_3, \big(\hat W^{\ell,k} \big)_{k\ge 1} \big)$ has the same law as $\big(\omega^\ell_3, v^\ell_3, ( W^{k} )_{k\ge 1} \big)$; we omit the details here.

For any $\phi\in C^\infty(\T^2)$, since $\big(\hat \omega^\ell_3, \hat v^\ell_3, \big(\hat W^{\ell,k} \big)_{k\ge 1} \big)$ has the same law as $\big(\omega^\ell_3, v^\ell_3, ( W^{k} )_{k\ge 1} \big)$, we have
  \begin{equation}\label{eq:ell-hat-omega}
  \aligned
  \big\<\hat \omega^\ell_3(t), \phi\big\> &= \<\omega_3(0),\phi\> + \int_0^t \big\<\hat \omega^\ell_3(s), \hat v^\ell_{H}(s) \cdot\nabla \phi \big\>\, ds -\int_0^t \big\<\hat\omega^\ell_{H}(s), (\nabla Q^\ell_{H,3}(0))^\ast \nabla\phi \big\>\, ds  \\
  &\quad - \int_0^t \Big\<\nabla\hat \omega^\ell_3(s), \Big(\nu I_2+ \frac12 Q^\ell_H(0)\Big) \nabla \phi \Big\>\, ds  \\
  &\quad + \sum_k \int_0^t \big[ \big\<\hat \omega^\ell_3(s), \sigma_k^{\ell,H} \cdot \nabla\phi \big\> - \big\<\sigma_k^{\ell,3}, \hat\omega^\ell_{H}(s)\cdot \nabla \phi\big\> \big]\, d\hat W^{\ell,k}_s ,
  \endaligned
  \end{equation}
  \begin{equation}\label{eq:ell-hat-v}
  \aligned
  \big\<\hat v^\ell_3(t), \phi\big\> &= \<v_3(0),\phi\> + \int_0^t \big\<\hat v^\ell_3(s), \hat v^\ell_{H}(s)\cdot\nabla \phi \big\>\, ds + \sum_k \int_0^t \big\<\hat v^\ell_3(s), \sigma_k^{\ell,H} \cdot \nabla\phi \big\>\, d\hat W^{\ell,k}_s\\
  &\quad -\int_0^t \Big\<\nabla \hat v^\ell_3(s), \Big(\nu I_2 + \frac12 Q^\ell_H(0)\Big) \nabla\phi \Big\>\, ds.
  \endaligned
  \end{equation}

It remains to take the limit $\ell\to 0$ in the above equations \eqref{eq:ell-hat-omega} and \eqref{eq:ell-hat-v}; thanks to the discussions in the previous paragraphs, it is standard to show the convergence of all the terms, including the nonlinear ones, except those involving stochastic integrals, which are the key ingredients for proving that the pair of limit processes $(\hat \omega_3, \hat v_3)$ is a weak solution to the deterministic system (cf. \eqref{eq:2D-3C-model-general-limit})
  \begin{equation}\label{eq:2D-3C-limit}
  \left\{ \aligned
  \partial_t \hat\omega_3 + \hat v_H \cdot \nabla \hat\omega_3 &= \big(\nu \Delta+\mathcal L_{\bar Q} \big)\hat\omega_3 + \nabla\cdot (A\, \hat\omega_H),\\
  \partial_t \hat v_3+ \hat v_H \cdot \nabla \hat v_3 &= \big(\nu \Delta+\mathcal L_{\bar Q} \big) \hat v_3,
  \endaligned \right.
  \end{equation}
where $\hat\omega_3= \nabla^\perp\cdot \hat v_H,\, \hat\omega_H= \nabla^\perp \hat v_3$. Therefore, we concentrate on the martingale parts in \eqref{eq:ell-hat-omega} and \eqref{eq:ell-hat-v}, trying to show that they vanish in a certain sense.

\begin{lemma}\label{lem-marting-zero-limit}
Assume Hypothesis \ref{basic-hypotheses}-(b); then as $\ell\to 0$, the martingale part
  $$ M^\ell_1(t):= \sum_k \int_0^t \big\<\hat v^\ell_3(s), \sigma_k^{\ell,H} \cdot \nabla\phi \big\>\, d\hat W^{\ell,k}_s $$
tends to 0 in the mean square sense.
\end{lemma}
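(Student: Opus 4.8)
The plan is to control the second moment of $M^\ell_1(t)$ directly by It\^o's isometry, the point being that the sum over $k$ of the squared stochastic integrands reassembles the quadratic form of the horizontal covariance operator $\mathbb Q_H^\ell$, whose operator norm vanishes in the limit by Hypothesis \ref{basic-hypotheses}-(b). Since all the a priori bounds \eqref{estimates-v-ell}–\eqref{estimates-omega-ell} are transferred to the tilded/hatted processes on $\hat\Omega$, I may freely use them under $\hat\E$.

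First I would apply It\^o's isometry to write, for each fixed $t\in[0,T]$,
\[
\hat\E\big[|M^\ell_1(t)|^2\big] = \hat\E \int_0^t \sum_k \big\<\hat v^\ell_3(s), \sigma_k^{\ell,H}\cdot\nabla\phi\big\>^2\, ds .
\]
The heart of the matter is to recognize the inner sum as a quadratic form of $\mathbb Q_H^\ell$. Introducing the vector field $F_s := \hat v^\ell_3(s)\,\nabla\phi \in L^2(\T^2,\R^2)$, one has $\big\<\hat v^\ell_3(s), \sigma_k^{\ell,H}\cdot\nabla\phi\big\> = \<F_s, \sigma_k^{\ell,H}\>$, so that, using the horizontal part of the expansion \eqref{covariance-matrix-expansion}, namely $\sum_k \sigma_k^{\ell,H}(x)\otimes\sigma_k^{\ell,H}(y) = Q_H^\ell(x-y)$,
\[
\sum_k \big\<\hat v^\ell_3(s), \sigma_k^{\ell,H}\cdot\nabla\phi\big\>^2 = \int_{\T^2}\!\int_{\T^2} F_s(x)^\ast\, Q_H^\ell(x-y)\, F_s(y)\, dx\, dy = \big\<F_s, \mathbb Q_H^\ell F_s\big\> \le \big\|\mathbb Q_H^\ell\big\|_{L^2\to L^2}\, \|F_s\|_{L^2}^2 .
\]
Since $\|F_s\|_{L^2}^2 \le \|\nabla\phi\|_{L^\infty}^2\, \|\hat v^\ell_3(s)\|_{L^2}^2$, the integrand is dominated by the operator norm times the energy of $\hat v^\ell_3$.

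Finally I would insert the uniform energy bound \eqref{estimates-v-ell}, which gives $\|\hat v^\ell_3(s)\|_{L^2}^2 \le \|v_3(0)\|_{L^2}^2$ for all $s$, $\hat\P$-a.s., whence
\[
\sup_{t\in[0,T]} \hat\E\big[|M^\ell_1(t)|^2\big] \le T\, \|\nabla\phi\|_{L^\infty}^2\, \|v_3(0)\|_{L^2}^2\, \big\|\mathbb Q_H^\ell\big\|_{L^2\to L^2} .
\]
By Hypothesis \ref{basic-hypotheses}-(b) the right-hand side tends to $0$ as $\ell\to 0$, which is precisely the claimed mean-square convergence. I do not expect a genuine obstacle in this lemma; the only mildly delicate point is the interchange of the sum over $k$ with the double spatial integral, which is justified by the $C^m$-convergence of the series in \eqref{covariance-matrix-expansion} together with the smoothness of $\phi$ and the $L^2$-integrability of $\hat v^\ell_3$.
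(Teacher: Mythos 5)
Your proposal is correct and follows essentially the same route as the paper's proof: both apply It\^o's isometry, rewrite the sum over $k$ as the quadratic form $\big\<\hat v^\ell_3(s)\nabla\phi,\, \mathbb Q_H^\ell\big(\hat v^\ell_3(s)\nabla\phi\big)\big\>$ via the covariance expansion, bound this by $\big\|\mathbb Q_H^\ell\big\|_{L^2\to L^2}\big\|\hat v^\ell_3(s)\nabla\phi\big\|_{L^2}^2$, and conclude with the pathwise energy bound \eqref{estimates-v-ell} and Hypothesis \ref{basic-hypotheses}-(b). Your closing remark on interchanging the sum with the spatial integrals is a justification the paper leaves implicit, but it does not change the argument.
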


\begin{proof}
By It\^o's isometry,
  $$\aligned
  \E\big( M^\ell_1(t)^2 \big) &= \E\bigg(\sum_k \int_0^t \big\<\hat v^\ell_{3}(s), \sigma^{\ell,H}_k \cdot\nabla \phi \big\>^2 \, ds \bigg) \\
  &= \E\bigg(\sum_k \int_0^t\!\! \int_{(\T^2)^2} \big[\big(\hat v^\ell_{3}(s)\nabla \phi\big)(x) \big]^\ast \sigma^{\ell,H}_k(x) \big[\sigma^{\ell,H}_k(y) \big]^\ast \big(\hat v^\ell_{3}(s)\nabla \phi\big)(y) \,dx dy ds \bigg) \\
  &= \E\bigg( \int_0^t\!\! \int_{(\T^2)^2} \big[\big(\hat v^\ell_{3}(s)\nabla \phi\big)(x) \big]^\ast Q_H^\ell(x-y) \big(\hat v^\ell_{3}(s)\nabla \phi\big)(y) \,dx dy ds \bigg) ;
  \endaligned $$
recalling the definition of the covariance operator $\mathbb Q_H^\ell$, we have
  $$\aligned
  \E\big(M^\ell_1(t)^2 \big)&= \E\bigg( \int_0^t \big\<\hat v^\ell_{3}(s)\nabla \phi, \mathbb Q_H^\ell\big(\hat v^\ell_{3}(s)\nabla \phi \big) \big\>\, ds \bigg) \\
  &\le \E\bigg( \int_0^t \big\|\hat v^\ell_{3}(s)\nabla \phi \big\|_{L^2}^2 \big\| \mathbb Q_H^\ell\big\|_{L^2\to L^2}\, ds \bigg) \\
  &\le \big\| \mathbb Q_H^\ell\big\|_{L^2\to L^2} \|\nabla \phi\|_{L^\infty}^2 T \|v_3(0) \|_{L^2}^2,
  \endaligned $$
where in the last step we have used estimate \eqref{estimates-v-ell}. Thanks to condition (b) in Hypothesis \ref{basic-hypotheses}, we conclude that the last quantity vanishes as $\ell\to 0$.
\end{proof}

Using the uniform bound \eqref{estimates-omega-ell}, we can show in the same way that
  $$ M^\ell_2(t):= \sum_k \int_0^t \big\<\hat \omega^\ell_3(s), \sigma_k^{\ell,H} \cdot \nabla\phi \big\>\, d\hat W^{\ell,k}_s $$
vanishes in the sense of mean square; indeed, the last step of Lemma \ref{lem-marting-zero-limit} becomes
  $$\aligned
  \E\big(M^\ell_2(t)^2 \big) &\le \big\| \mathbb Q_H^\ell\big\|_{L^2\to L^2} \|\nabla \phi\|_{L^\infty}^2\, \E \int_0^t \big\|\hat \omega^\ell_{3}(s) \big\|_{L^2}^2\, ds \\
  & \le \big\| \mathbb Q_H^\ell\big\|_{L^2\to L^2} \|\nabla \phi\|_{L^\infty}^2 T C_{\nu, Q} \big(\|v_3(0) \|_{L^2}^2 + \|\omega_3(0) \|_{L^2}^2\big)
  \endaligned$$
which tends to 0 as $\ell\to 0$. Finally, we consider the other martingale in \eqref{eq:ell-hat-omega}.

\begin{lemma}
Assume condition (b) in Hypothesis \ref{basic-hypotheses}. The martingale part
  $$ M^\ell_3(t):= \sum_k \int_0^t \big\<\sigma_k^{\ell,3}, \hat\omega^\ell_{H}(s)\cdot \nabla \phi\big\> \, d\hat W^{\ell,k}_s $$
vanishes as $\ell\to 0$.
\end{lemma}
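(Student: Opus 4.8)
The plan is to repeat the argument of Lemma \ref{lem-marting-zero-limit}, the only structural difference being that the covariance now entering the computation is the \emph{vertical} one $Q_3^\ell$ rather than the horizontal $Q_H^\ell$. First I would apply It\^o's isometry to obtain
\[
\E\big(M_3^\ell(t)^2\big) = \E\bigg(\sum_k \int_0^t \big\<\sigma_k^{\ell,3}, \hat\omega^\ell_H(s)\cdot\nabla\phi\big\>^2\,ds\bigg).
\]
Introducing the scalar field $g^\ell(s,x) := \hat\omega^\ell_H(s,x)\cdot\nabla\phi(x)$, each inner product reads $\big\<\sigma_k^{\ell,3}, g^\ell(s)\big\> = \int_{\T^2}\sigma_k^{\ell,3}(x)\,g^\ell(s,x)\,dx$; summing the squares over $k$ and invoking the vertical part of \eqref{covariance-matrix-expansion}, namely $Q_3^\ell(x-y)=\sum_k \sigma_k^{\ell,3}(x)\,\sigma_k^{\ell,3}(y)$, collapses the sum into a quadratic form in the vertical covariance operator:
\[
\sum_k \big\<\sigma_k^{\ell,3}, g^\ell(s)\big\>^2 = \big\<g^\ell(s), \mathbb Q_3^\ell g^\ell(s)\big\> \le \big\|\mathbb Q_3^\ell\big\|_{L^2\to L^2}\,\big\|g^\ell(s)\big\|_{L^2}^2.
\]

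The second step is to pass from $\|g^\ell(s)\|_{L^2}$ to the gradient of $\hat v_3^\ell$. Since $\hat\omega^\ell_H=\nabla^\perp\hat v_3^\ell$ and $\|\nabla^\perp f\|_{L^2}=\|\nabla f\|_{L^2}$, we have $\|g^\ell(s)\|_{L^2}\le \|\nabla\phi\|_{L^\infty}\,\|\nabla\hat v_3^\ell(s)\|_{L^2}$. Combining this with the previous display yields
\[
\E\big(M_3^\ell(t)^2\big) \le \big\|\mathbb Q_3^\ell\big\|_{L^2\to L^2}\,\|\nabla\phi\|_{L^\infty}^2\;\E\int_0^t \big\|\nabla\hat v_3^\ell(s)\big\|_{L^2}^2\,ds,
\]
and the uniform energy bound \eqref{estimates-v-ell} controls the remaining time integral by $(2\nu)^{-1}\|v_3(0)\|_{L^2}^2$, independently of $\ell$. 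Hypothesis \ref{basic-hypotheses}-(b) then guarantees $\|\mathbb Q_3^\ell\|_{L^2\to L^2}\to 0$, so the right-hand side vanishes as $\ell\to0$, which is the claim.

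The only point deserving attention is the bookkeeping in the first step: here it is the scalar vertical component $\sigma_k^{\ell,3}$ that is paired with the test field, so the operator governing the fluctuations is $\mathbb Q_3^\ell$ and not $\mathbb Q_H^\ell$. This is precisely why Hypothesis \ref{basic-hypotheses}-(b) is formulated as the simultaneous decay of both operator norms: the vertical smallness $\|\mathbb Q_3^\ell\|_{L^2\to L^2}\to 0$ is exactly what kills this last, stretching-type martingale. No genuine difficulty arises beyond correctly identifying this covariance, since the requisite uniform-in-$\ell$ energy estimate is already in hand.
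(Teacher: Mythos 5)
Your proposal is correct and coincides essentially line by line with the paper's own proof: It\^o isometry, collapsing the sum over $k$ via $Q_3^\ell(x-y)=\sum_k\sigma_k^{\ell,3}(x)\,\sigma_k^{\ell,3}(y)$ into the quadratic form $\big\<g^\ell,\mathbb Q_3^\ell g^\ell\big\>$, the bound by $\|\mathbb Q_3^\ell\|_{L^2\to L^2}\|\nabla\phi\|_{L^\infty}^2\|\nabla\hat v_3^\ell\|_{L^2}^2$ using $\hat\omega_H^\ell=\nabla^\perp\hat v_3^\ell$, the uniform energy estimate \eqref{estimates-v-ell}, and Hypothesis \ref{basic-hypotheses}-(b). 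Your closing remark correctly identifies the only delicate point, namely that it is the vertical covariance operator $\mathbb Q_3^\ell$ (not $\mathbb Q_H^\ell$) that governs this stretching-type martingale.
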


\begin{proof}
Again by It\^o's isometry, we have
  $$\aligned
  \E\big(M^\ell_3(t)^2 \big) &= \E\int_0^t \sum_k\big\<\sigma^{\ell,3}_k, \hat\omega^\ell_H(s) \cdot \nabla  \phi \big\>^2 \, ds.
  \endaligned $$
It holds that
  $$\aligned
  \sum_k\big\<\sigma^{\ell,3}_k, \hat\omega^\ell_H(s) \cdot \nabla  \phi \big\>^2
  &= \int_{(\T^2)^2} Q^\ell_3(x-y) (\hat\omega^\ell_H(s) \cdot \nabla  \phi)(x) (\hat\omega^\ell_H(s) \cdot \nabla  \phi)(y)\, dx dy \\
  &= \big\<\hat\omega^\ell_H(s) \cdot \nabla  \phi, \mathbb Q^\ell_3 \big(\hat\omega^\ell_H(s) \cdot \nabla  \phi \big)\big> \\
  &\le \big\| \mathbb Q^\ell_3 \big\|_{L^2\to L^2} \big\| \hat\omega^\ell_H(s) \cdot \nabla  \phi\big\|_{L^2}^2 \\
  &\le \big\| \mathbb Q^\ell_3 \big\|_{L^2\to L^2} \|\nabla  \phi \|_{L^\infty}^2 \| \nabla \hat v_3^\ell(s) \|_{L^2}^2 ,
  \endaligned $$
where we have used $\hat\omega^\ell_H = \nabla^\perp \hat v_3^\ell$.  Thanks to the estimate \eqref{eq:a-priori-v-3}, we arrive at
  $$\E \big(M^\ell_3(t)^2 \big) \le \big\| \mathbb Q^\ell_3 \big\|_{L^2\to L^2} \|\nabla  \phi \|_{L^\infty}^2 \, \E \int_0^t \| \nabla \hat v_3^\ell(s) \|_{L^2}^2 \, ds \lesssim_{\nu} \big\| \mathbb Q^\ell_3 \big\|_{L^2\to L^2} \|\nabla  \phi \|_{L^\infty}^2 \|v_3(0) \|_{L^2}^2 ,$$
the last quantity vanishes due to condition (b) in Hypothesis \ref{basic-hypotheses}.
\end{proof}

Finally we present

\begin{proof}[Proof of Theorem \ref{thm:Boussinesq-general-result}]
Summarizing the arguments starting from Lemma \ref{lem-marting-zero-limit}, we see that the martingale parts in \eqref{eq:ell-hat-omega} and \eqref{eq:ell-hat-v} vanish in the sense of mean square. Moreover, the discussions above Lemma \ref{lem-marting-zero-limit} give us the convergence of other terms in the equations \eqref{eq:ell-hat-omega} and \eqref{eq:ell-hat-v}. Therefore, we have proved the first assertion in Theorem \ref{thm:Boussinesq-general-result}.

We turn to showing the uniqueness of weak solutions to the system \eqref{eq:2D-3C-model-general-limit} in the class $L^\infty(0,T; L^2) \cap L^2(0,T; H^1)$; the proof is similar to the uniqueness part of Theorem \ref{thm:well-posedness}, using now the Lions-Magenes lemma. Let $(\tilde \omega_3, \tilde v_3)$ and $(\bar\omega_3, \bar v_3 )$ be two weak solutions to \eqref{eq:2D-3C-model-general-limit} in $L^\infty(0,T; L^2) \cap L^2(0,T; H^1)$, with the same initial data, define $\omega_3= \tilde \omega_3 - \bar\omega_3$, $v_3= \tilde v_3 - \bar v_3$; then we have
  $$\left\{ \aligned
  \partial_t \omega_3 + v_H \cdot\nabla \tilde \omega_3 + \bar v_H \cdot\nabla \omega_3 &= \big(\nu \Delta+\mathcal L_{\bar Q} \big) \omega_3 + \nabla\cdot (A\, \omega_H),\\
  \partial_t v_3+ v_H \cdot\nabla \tilde v_3 + \bar v_H \cdot\nabla v_3 &= \big(\nu \Delta+\mathcal L_{\bar Q} \big) v_3,
  \endaligned \right. $$
where $v_H= \tilde v_H- \bar v_H$ and $\omega_H= \tilde \omega_H- \bar \omega_H$. Similarly to the discussions below Definition \ref{sect-5-definition}, we have $\omega_3 \in L^2(0,T; H^1)$ and $\partial_t \omega_3 \in L^2(0,T; H^{-1})$; thus by the Lions-Magenes lemma,
  $$\aligned
  \frac{d}{dt} \|\omega_3 \|_{L^2}^2 &= -2\<\omega_3, v_H\cdot\nabla \tilde \omega_3\> -2\nu \|\nabla \omega_3 \|_{L^2}^2 - \<\nabla \omega_3,\bar Q \nabla \omega_3\>- 2\<\nabla \omega_3,A \,\omega_H \> \\
  &\le -\nu \|\nabla \omega_3 \|_{L^2}^2 + \nu^{-1} \| \omega_3 \|_{L^2}^2 \|\nabla \tilde \omega_3 \|_{L^2}^2 - \<\nabla \omega_3,\bar Q \nabla\omega_3\>- 2\<\nabla \omega_3,A\, \omega_H \>,
  \endaligned $$
where in the second step we have used the estimate \eqref{eq:nonlinearity}. Similarly,
  $$\frac{d}{dt} \| v_3 \|_{L^2}^2 \le \nu \|\nabla \omega_3 \|_{L^2}^2 + \nu^{-1} \| v_3 \|_{L^2}^2 \|\nabla \tilde v_3 \|_{L^2}^2 -2\nu \|\nabla v_3 \|_{L^2}^2 - \<\nabla v_3,\bar Q \nabla v_3\> . $$
Noting that $\omega_H=\nabla^\perp v_3$; summing up the above two inequalities leads to
  $$\aligned
  \frac{d}{dt} \big(\|\omega_3 \|_{L^2}^2+ \|v_3 \|_{L^2}^2\big) & \le \nu^{-1} \big( \|\nabla \tilde \omega_3 \|_{L^2}^2 + \|\nabla \tilde v_3 \|_{L^2}^2 \big) \big(\|\omega_3 \|_{L^2}^2 + \|v_3 \|_{L^2}^2 \big) \\
  &\quad - \<\nabla \omega_3,\bar Q \nabla \omega_3\> - \<\nabla v_3,\bar Q \nabla v_3\>- 2\<\nabla \omega_3,A \nabla^\perp v_3 \> \\
  &\le \nu^{-1} \big( \|\nabla \tilde \omega_3 \|_{L^2}^2 + \|\nabla \tilde v_3 \|_{L^2}^2 \big) \big(\|\omega_3 \|_{L^2}^2 + \|v_3 \|_{L^2}^2 \big)
  \endaligned $$
where the second step follows from condition \eqref{eq:cond-unique-limit}. Since $\tilde \omega_3$ and $\tilde v_3$ belong to $L^2(0,T;H^1)$, we finish the proof by applying Gronwall's inequality.
\end{proof}

\subsection{Proof of Proposition \ref{prop:limit-matrix}} \label{subsec-proof-prop}

We first make some preparations. Recall that $G$ is the Green function on $\T^2$; the Green function on $\R^2$ admits the expression
  $$ G_{\R^2}(x) = -\frac1{2\pi} \log |x|; $$
there exists a smooth function $\zeta:\T^2 \to \R$ such that
  \begin{equation} \label{eq:Green-funct}
  G(x)= G_{\R^2}(x) + \zeta(x), \quad x\in \T^2.
  \end{equation}
Given a function $f\in C(\T^2)$ with compact support in $\big(-\frac12, \frac12\big)^2$, we will regard it also as a function on $\R^2$, still supported in $\big(-\frac12, \frac12\big)^2$; for any $\ell\in (0,1)$, we denote $f_\ell(x) = \ell^{-2} f(\ell^{-1} x),\, x\in \T^2$. Let $\T^2_{\ell^{-1}} = \big[-\frac12 \ell^{-1}, \frac12 \ell^{-1} \big]^2$ be the torus of size $\ell^{-1} $. We first prove the following formula.

\begin{lemma}\label{lem-expression-derivative-G}
Let $\phi, \psi \in C_c\big( (-\frac12, \frac12)^2 \big)$. Then we have
  $$\aligned
  \<\partial_i G\ast \phi_\ell, \partial_j G\ast \psi_\ell\> &= \int_{\T^2_{\ell^{-1}}}\! (\partial_i G_{\R^2} \ast \phi)(x) \, (\partial_j G_{\R^2} \ast \psi)(x) \, dx \\
  &\quad + \ell  \int_{\T^2_{\ell^{-1}}}\! (\partial_i G_{\R^2} \ast \phi)(x) \bigg[\int_{\T^2} \partial_j \zeta(\ell(x-z)) \, \psi(z)\, dz \bigg]\, dx \\
  &\quad + \ell  \int_{\T^2_{\ell^{-1}}}\! \bigg[\int_{\T^2} \partial_i \zeta(\ell(x-z)) \, \phi(z)\, dz \bigg] (\partial_j G_{\R^2} \ast \psi)(x) \, dx \\
  &\quad + \ell^2 \int_{\T^2_{\ell^{-1}}}\! \bigg[\int_{\T^2} \partial_i \zeta(\ell(x-z)) \, \phi(z)\, dz \bigg] \bigg[\int_{\T^2} \partial_j \zeta(\ell(x-z)) \, \psi(z)\, dz \bigg]\, dx.
  \endaligned $$
\end{lemma}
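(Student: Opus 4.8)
The plan is to insert the splitting $G = G_{\R^2} + \zeta$ from \eqref{eq:Green-funct} into each of the two factors of the inner product and to expand by bilinearity. Writing $\partial_i G = \partial_i G_{\R^2} + \partial_i \zeta$ (and likewise for $\partial_j G$), the quantity $\langle \partial_i G \ast \phi_\ell, \partial_j G \ast \psi_\ell\rangle$ decomposes into four pieces according to whether the singular part $\partial_i G_{\R^2}$ or the smooth part $\partial_i \zeta$ is retained in each factor. The whole computation then reduces to understanding a single convolution factor $(\partial_i G \ast \phi_\ell)(x)$ after rescaling space by $\ell$, and then reassembling.

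First I would fix one factor and substitute $x = \ell x'$ together with $y = \ell z$ in the torus convolution $(\partial_i G \ast \phi_\ell)(x) = \int_{\T^2} \partial_i G(x - y)\,\phi_\ell(y)\, dy$. Since $\phi_\ell(y)\, dy = \phi(z)\, dz$ and $x - y = \ell(x' - z)$, this turns into $\int \partial_i G(\ell(x'-z))\,\phi(z)\, dz$. On the singular part I would use the degree $-1$ homogeneity of $\partial_i G_{\R^2}$, namely $\partial_i G_{\R^2}(\ell w) = \ell^{-1}\partial_i G_{\R^2}(w)$, to pull out a factor $\ell^{-1}$ and recognise the remaining integral as the $\R^2$-convolution $(\partial_i G_{\R^2} \ast \phi)(x')$; on the smooth part no such rescaling of the integrand is available and one simply keeps $\int_{\T^2}\partial_i\zeta(\ell(x'-z))\,\phi(z)\, dz$. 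This gives $(\partial_i G \ast \phi_\ell)(\ell x') = \ell^{-1}(\partial_i G_{\R^2}\ast\phi)(x') + \int_{\T^2}\partial_i\zeta(\ell(x'-z))\,\phi(z)\, dz$, and symmetrically for the factor built from $\partial_j G$ and $\psi$.

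To finish, I would perform the change of variables $x = \ell x'$ in the outer integral $\int_{\T^2}(\partial_i G \ast \phi_\ell)(x)\,(\partial_j G \ast \psi_\ell)(x)\, dx$, which contributes a Jacobian $\ell^2$ and turns the domain $\T^2$ into $\T^2_{\ell^{-1}}$, and then multiply out the two factors obtained above. The four resulting products carry the powers $\ell^2\cdot\ell^{-2} = 1$, $\ell^2\cdot\ell^{-1} = \ell$, $\ell^2\cdot\ell^{-1} = \ell$ and $\ell^2$, which are exactly the prefactors $1, \ell, \ell, \ell^2$ in the claimed identity; matching each product with the corresponding line then yields the formula. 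The step requiring the most care is the reduction of the torus convolution to an honest $\R^2$-convolution of $\partial_i G_{\R^2}$ with $\phi$: this uses that $\phi, \psi$ are supported in $(-\tfrac12,\tfrac12)^2$, so that after the substitution the convolution variable $z$ only meets the support of $\phi$, together with the decomposition \eqref{eq:Green-funct} (with $G_{\R^2}(w) = -\tfrac{1}{2\pi}\log|w|$ and $\zeta$ smooth on a neighbourhood of the fundamental domain), which lets one evaluate $\partial_i G$ at the unreduced argument $\ell(x'-z)$ without boundary effects from the periodization. Once this identification is in place, the remaining manipulations are routine bookkeeping of Jacobians and of the powers of $\ell$.
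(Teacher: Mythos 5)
Your proposal is correct and follows essentially the same route as the paper's proof: the same changes of variables $y=\ell y'$ (using the compact support of $\phi,\psi$) and $x=\ell x'$ (producing the Jacobian $\ell^2$ and the domain $\T^2_{\ell^{-1}}$), the same use of the homogeneity $\partial_i G_{\R^2}(\ell w)=\ell^{-1}\partial_i G_{\R^2}(w)$, and the same bilinear expansion of the splitting $G=G_{\R^2}+\zeta$ into the four terms with prefactors $1,\ell,\ell,\ell^2$. The only difference is the order in which the splitting and the rescalings are performed, which is immaterial.
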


\begin{proof}
We have
  $$(\partial_i G\ast \phi_\ell)(x) =  \int_{\T^2} \partial_i G(x-y)\, \phi_\ell(y)\, dy = \int_{\T^2_{\ell^{-1}}} \partial_i G(x- \ell y')\, \phi(y')\, dy', $$
where in the second step we have changed the variable $y =\ell y'$. As $ \phi$ has compact support in $(-\frac12, \frac12)^2$, we arrive at
  $$(\partial_i G\ast \phi_\ell)(x)= \int_{\T^2} \partial_i G(x- \ell y) \, \phi(y)\, dy; $$
in the same way,
  $$(\partial_j G\ast \psi_\ell)(x)= \int_{\T^2} \partial_j G(x- \ell z) \, \psi(z)\, dz. $$
Therefore,
  $$\aligned
  \<\partial_i G\ast \phi_\ell, \partial_j G\ast \psi_\ell\>&= \int_{\T^2} \bigg[\int_{\T^2} \partial_i G(x- \ell y) \, \phi(y)\, dy \bigg] \bigg[\int_{\T^2} \partial_j G(x- \ell z) \, \psi(z)\, dz \bigg] dx\\
  &= \ell^2\!\! \int_{\T^2_{\ell^{-1}}} \bigg[\int_{\T^2} \partial_i G(\ell(x'-y)) \, \phi(y)\, dy \bigg] \bigg[\int_{\T^2} \partial_j G(\ell(x'-z)) \, \psi(z)\, dz \bigg] dx' ,
  \endaligned $$
where we have changed variable $x=\ell x'$. Note that $\partial_i G_{\R^2}(\ell x) = \ell^{-1} \partial_i G_{\R^2}(x)$ for any $x\in \R^2 \setminus \{0\}$, therefore,
  $$\int_{\T^2} \partial_i G_{\R^2} (\ell(x'-y)) \, \phi(y)\, dy =\ell^{-1} (\partial_i G_{\R^2} \ast \phi)(x'). $$
Combining this fact with \eqref{eq:Green-funct}, we obtain the desired expression.
\end{proof}

The next simple result shows that the convolution of $\nabla G_{\R^2}$ and a probability density, with compact support, is close to $\nabla G_{\R^2}$ in the region far from the origin. Let $B_R \subset \R^2$ be the open ball centered at the origin with radius $R>0$.

\begin{lemma}\label{lem-nabla-G}
Let $\psi\in C_c(B_R )$ be a probability density function. Then there exists a constant $C_R>0$ such that
  $$\big|(\nabla G_{\R^2} \ast \psi)(x) - \nabla G_{\R^2}(x)\big| \le \frac{C_R}{|x|^2}, \quad \mbox{for all } |x|\ge (2R) \vee 1. $$
\end{lemma}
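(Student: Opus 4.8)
The plan is to exploit that $\psi$ has unit mass, so that $\nabla G_{\R^2}(x)$ may be written as an average of $\nabla G_{\R^2}(x)$ against $\psi$, and then to estimate the resulting increment of $\nabla G_{\R^2}$ by its Hessian. Precisely, since $\int_{B_R}\psi(y)\,dy = 1$, I would write
$$(\nabla G_{\R^2}\ast\psi)(x) - \nabla G_{\R^2}(x) = \int_{B_R}\big[\nabla G_{\R^2}(x-y) - \nabla G_{\R^2}(x)\big]\,\psi(y)\,dy,$$
reducing the problem to a pointwise bound on the increment $\nabla G_{\R^2}(x-y) - \nabla G_{\R^2}(x)$ for $y\in B_R$.

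Next I would apply the fundamental theorem of calculus along the segment $t\mapsto x-ty$, giving
$$\big|\nabla G_{\R^2}(x-y) - \nabla G_{\R^2}(x)\big| \le |y|\,\sup_{t\in[0,1]}\big|\nabla^2 G_{\R^2}(x-ty)\big|.$$
The key geometric observation is that this segment stays away from the singularity of $G_{\R^2}$: if $|x|\ge 2R$ and $|y|\le R$, then for every $t\in[0,1]$ one has $|x-ty|\ge |x| - R \ge |x|/2 > 0$, so the Hessian is evaluated only at points of modulus at least $|x|/2$.

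From the explicit formula $G_{\R^2}(x) = -\frac1{2\pi}\log|x|$ one computes $\partial_i\partial_j G_{\R^2}(z) = -\frac1{2\pi}\big(\delta_{ij}|z|^{-2} - 2 z_i z_j|z|^{-4}\big)$, whence $|\nabla^2 G_{\R^2}(z)|\le C|z|^{-2}$ for an absolute constant $C$ and all $z\ne 0$. Combining this with the segment bound yields $|\nabla^2 G_{\R^2}(x-ty)|\le C(|x|/2)^{-2} = 4C|x|^{-2}$, hence $\big|\nabla G_{\R^2}(x-y) - \nabla G_{\R^2}(x)\big|\le 4CR|x|^{-2}$ uniformly in $y\in B_R$. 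Integrating against $\psi$ (which has unit mass) then gives the claim with $C_R = 4CR$.

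No serious obstacle is expected here; the only point requiring care is verifying that the segment $x-ty$ avoids the origin, which is exactly where the hypothesis $|x|\ge 2R$ enters (the companion condition $|x|\ge 1$ merely guarantees that $|x|^{-2}$ is a meaningful decay rate and can be absorbed into the constant). A minor variant of the same argument, replacing the unit-mass normalization by $a_0=\int\chi$, covers the relaxed hypothesis $a_0\ne 0$ mentioned in the preceding remark.
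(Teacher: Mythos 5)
Your proof is correct, and while it opens exactly as the paper does, the core estimate is obtained by a genuinely different (and somewhat more robust) device. Both arguments begin by using $\int_{B_R}\psi\,dy=1$ to write the difference as $\int_{B_R}\big[\nabla G_{\R^2}(x-y)-\nabla G_{\R^2}(x)\big]\psi(y)\,dy$, reducing everything to a pointwise bound on the increment. The paper then works with the explicit formula $\nabla G_{\R^2}(z)=-\frac{1}{2\pi}\frac{z}{|z|^2}$ and the algebraic identity $|x|^2(x-y)-|x-y|^2x=-|x|^2y-|y|^2x+2(x\cdot y)x$, bounds the three resulting terms by hand, and finishes with $|x|^2/|x-y|^2\le 4$ for $|y|\le R$, $|x|\ge 2R$; notably, the hypothesis $|x|\ge 1$ is genuinely used there, to absorb the term $R^2|x|$ into $R^2|x|^2$. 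You instead apply the fundamental theorem of calculus along the segment $t\mapsto x-ty$ and the Hessian decay $|\nabla^2 G_{\R^2}(z)|\le C|z|^{-2}$, with the same geometric observation (the segment stays in the region $|z|\ge |x|/2$) playing the role of the paper's final step. What your route buys: it avoids the kernel-specific algebra, it generalizes verbatim to any convolution kernel whose Hessian decays quadratically (and to other dimensions), and it shows that the condition $|x|\ge 1$ is not needed at all for the conclusion--your parenthetical remark about it could in fact be sharpened to say it is simply an artifact of the paper's term-by-term estimate. What the paper's route buys is a completely self-contained computation with explicit constants, with no appeal to differentiability along a segment. Your closing observation about replacing unit mass by $a_0=\int\chi\neq 0$ is also consistent with the paper's Remark following Proposition 3.6.
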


\begin{proof}
Note that
  $$\aligned
  (\nabla G_{\R^2} \ast \psi)(x)- \nabla G_{\R^2}(x) &= -\frac1{2\pi} \int_{B_R} \frac{x-y}{|x-y|^2} \psi(y)\, dy + \frac1{2\pi} \frac{x}{|x|^2} \\
  &= -\frac1{2\pi} \int_{B_R} \bigg(\frac{x-y}{|x-y|^2} - \frac{x}{|x|^2} \bigg) \psi(y)\, dy,
  \endaligned $$
therefore,
  $$\aligned
  \big| (\nabla G_{\R^2} \ast \psi)(x)- \nabla G_{\R^2}(x) \big| &\le \frac1{2\pi} \int_{B_R} \bigg| \frac{x-y}{|x-y|^2} - \frac{x}{|x|^2} \bigg| \psi(y)\, dy \\
  &= \frac1{2\pi} \int_{B_R} \frac{\big| |x|^2(x-y) - |x-y|^2 x\big|}{|x-y|^2|x|^2}\, \psi(y)\, dy .
  \endaligned $$
Since $|x|^2(x-y) - |x-y|^2 x = -|x|^2 y -|y|^2 x + 2(x\cdot y) x$, we have
  $$\aligned
  \big| (\nabla G_{\R^2} \ast \psi)(x)- \nabla G_{\R^2}(x) \big| &\le \frac1{2\pi} \int_{B_R} \frac{|x|^2 |y| +|y|^2 |x| + 2|x\cdot y|\, |x|}{|x-y|^2|x|^2}\, \psi(y)\, dy \\
  &\le \frac{1}{2\pi} \int_{B_R} \frac{|x|^2 R +R^2 |x| + 2R |x|^2}{|x-y|^2|x|^2}\, \psi(y)\, dy \\
  &\le \frac{R^2 + 3R}{2\pi} \int_{B_R} \frac{1}{|x-y|^2 }\, \psi(y)\, dy
  \endaligned$$
for all $|x|\ge 1$. Now if $|x|\ge (2R) \vee 1$, we have
  $$\aligned
  \big| (\nabla G_{\R^2} \ast \psi)(x)- \nabla G_{\R^2}(x) \big| &\le \frac{R^2 + 3R}{2\pi|x|^2} \int_{B_R} \frac{|x|^2}{|x-y|^2 }\, \psi(y)\, dy \\
  &\le \frac{2(R^2 + 3R)}{\pi|x|^2}
  \endaligned  $$
since $\frac{|x|^2}{|x-y|^2 } \le 4$ for $|y|\le R$ and $|x|\ge 2R$, and $\psi$ is a probability density function.
\end{proof}

The next lemma gives the limit behavior of integrals of $\partial_i G$ outside a fixed ball.

\begin{lemma} \label{lem-limit-integral-G}
For any $R>0$, $i=1,2$, we have
  $$\lim_{\ell\to 0} \frac1{\log \ell^{-1}} \int_{\T^2_{\ell^{-1}} \setminus B_R} (\partial_i G_{\R^2}(x) )^2\, dx = \frac1{4\pi}. $$
\end{lemma}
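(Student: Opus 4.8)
The plan is to reduce the statement to an explicit radial integral, exploiting the exact form of the free Green function. Since $G_{\R^2}(x) = -\frac{1}{2\pi}\log|x|$, one has $\partial_i G_{\R^2}(x) = -\frac{1}{2\pi}\frac{x_i}{|x|^2}$, hence $(\partial_i G_{\R^2}(x))^2 = \frac{1}{4\pi^2}\frac{x_i^2}{|x|^4}$. First I would observe that the domain $\T^2_{\ell^{-1}}\setminus B_R$ is invariant under the reflection $(x_1,x_2)\mapsto(x_2,x_1)$, so the integrals for $i=1$ and $i=2$ coincide; adding them and using $x_1^2+x_2^2=|x|^2$ gives
\[
\int_{\T^2_{\ell^{-1}}\setminus B_R}(\partial_i G_{\R^2}(x))^2\, dx = \frac{1}{8\pi^2}\int_{\T^2_{\ell^{-1}}\setminus B_R}\frac{dx}{|x|^2}.
\]
Thus it suffices to show that $\int_{\T^2_{\ell^{-1}}\setminus B_R}\frac{dx}{|x|^2}\sim 2\pi\log\ell^{-1}$ as $\ell\to 0$.

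For this, I would sandwich the square between its inscribed and circumscribed disks, namely $B_{\ell^{-1}/2}\subseteq \T^2_{\ell^{-1}}\subseteq B_{\ell^{-1}/\sqrt2}$ (the inradius of $\T^2_{\ell^{-1}}$ is $\frac12\ell^{-1}$ and the circumradius is $\frac{1}{\sqrt2}\ell^{-1}$). On an annulus the integral is computed exactly in polar coordinates: for $\rho>R$,
\[
\int_{B_\rho\setminus B_R}\frac{dx}{|x|^2} = \int_R^\rho \frac{1}{r^2}\,2\pi r\, dr = 2\pi\log\frac{\rho}{R}.
\]
Since the integrand is nonnegative, applying this with $\rho=\ell^{-1}/2$ for the lower bound and $\rho=\ell^{-1}/\sqrt2$ for the upper bound yields, for all $\ell$ small enough that $\ell^{-1}/2>R$,
\[
2\pi\big(\log\ell^{-1}-\log(2R)\big) \le \int_{\T^2_{\ell^{-1}}\setminus B_R}\frac{dx}{|x|^2} \le 2\pi\big(\log\ell^{-1}-\tfrac12\log 2-\log R\big).
\]
Both bounds equal $2\pi\log\ell^{-1}+O(1)$.

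Dividing through by $\log\ell^{-1}$ and letting $\ell\to 0$, the $O(1)$ terms become negligible, so $\frac{1}{\log\ell^{-1}}\int_{\T^2_{\ell^{-1}}\setminus B_R}\frac{dx}{|x|^2}\to 2\pi$; combining this with the first display gives the claimed limit $\frac{1}{8\pi^2}\cdot 2\pi = \frac{1}{4\pi}$. There is no serious obstacle here: once the square is replaced by disks the computation is exact, and the only point requiring minor care is the geometric sandwiching of $\T^2_{\ell^{-1}}$ between the inscribed and circumscribed balls, which confines the error to a constant uniform in $\ell$. The fixed excised ball $B_R$ contributes only a bounded constant, which disappears after division by $\log\ell^{-1}$; this is precisely why the answer is independent of $R$.
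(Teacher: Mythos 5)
Your proof is correct and follows essentially the same route as the paper's: the same symmetrization $(x_1,x_2)\mapsto(x_2,x_1)$ reducing the integrand to $\frac{1}{8\pi^2}|x|^{-2}$, followed by sandwiching the square $\T^2_{\ell^{-1}}$ between two balls and computing the annular integrals exactly. The only cosmetic difference is that you use the circumscribed ball of radius $\ell^{-1}/\sqrt{2}$ for the upper bound where the paper uses the cruder ball of radius $\ell^{-1}$; both yield the same $O(1)$ error and hence the same limit.
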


\begin{proof}
Since $\partial_i G_{\R^2}(x) = -\frac1{2\pi} \frac{x_i}{|x|^2}$, we have
  $$\aligned
  \int_{\T^2_{\ell^{-1}} \setminus B_R} (\partial_1 G_{\R^2}(x) )^2\, dx &= \frac1{4\pi^2} \int_{\T^2_{\ell^{-1}} \setminus B_R} \frac{x_1^2}{|x|^4}\, dx,
  \endaligned $$
by changing of variable $(x_1,x_2) \to (x_2, x_1)$, it is equal to
  $$\aligned
  \frac1{4\pi^2} \int_{\T^2_{\ell^{-1}} \setminus B_R} \frac{x_2^2}{|x|^4}\, dx &= \int_{\T^2_{\ell^{-1}} \setminus B_R} (\partial_2 G_{\R^2}(x) )^2\, dx.
  \endaligned $$
Therefore,
  \begin{equation} \label{lem-limit-integral-G.1}
  \int_{\T^2_{\ell^{-1}} \setminus B_R} (\partial_1 G_{\R^2}(x) )^2\, dx= \frac{1}{8\pi^2} \int_{\T^2_{\ell^{-1}} \setminus B_R} \frac{dx}{|x|^2} .
  \end{equation}
We have, for any $\ell< R^{-1}$,
  $$\int_{R\le |x| \le (2\ell)^{-1}}  \frac{dx}{|x|^2} \le \int_{\T^2_{\ell^{-1}} \setminus B_R} \frac{dx}{|x|^2} \le \int_{R\le |x| \le \ell^{-1}}  \frac{dx}{|x|^2}, $$
thus,
  $$2\pi \log \frac{\ell^{-1}}{2R} \le \int_{\T^2_{\ell^{-1}} \setminus B_R} \frac{dx}{|x|^2} \le 2\pi \log \frac{\ell^{-1}}{R} .$$
This implies
  $$\lim_{\ell\to 0} \frac{1}{\log \ell^{-1}} \int_{\T^2_{\ell^{-1}} \setminus B_R} \frac{dx}{|x|^2} = 2\pi. $$
Combining this limit with \eqref{lem-limit-integral-G.1}, we finish the proof.
\end{proof}

We are ready to prove the following key result.

\begin{proposition}\label{prop-growth-norm}
Let $\phi, \psi \in C_c\big( (-\frac12, \frac12)^2 \big)$ be radially symmetric probability density functions. Then for $i,j\in \{1,2\}$,
  $$\lim_{\ell\to 0} \frac{\<\partial_i G\ast \phi_\ell, \partial_j G\ast \psi_\ell\>}{\log \ell^{-1}} = \frac1{4\pi} \delta_{i,j}, $$
where $\delta_{i,j}$ is Kronecker's delta, i.e. it equals 1 if $i=j$ and $0$ if $i\ne j$.
\end{proposition}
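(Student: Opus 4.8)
The plan is to start from the exact identity in Lemma \ref{lem-expression-derivative-G}, which expresses $\<\partial_i G\ast\phi_\ell, \partial_j G\ast\psi_\ell\>$ as the sum of the main term
$$ \mathrm{(I)}= \int_{\T^2_{\ell^{-1}}} (\partial_i G_{\R^2}\ast\phi)(x)\,(\partial_j G_{\R^2}\ast\psi)(x)\,dx $$
together with three remainder terms carrying prefactors $\ell$, $\ell$ and $\ell^2$ and integrands built from the smooth corrector $\zeta$. First I would dispose of the three $\zeta$-remainders. Since $\zeta$ is smooth on $\T^2$, its first derivatives are bounded, so each inner integral $\int_{\T^2}\partial\zeta(\ell(x-z))\psi(z)\,dz$ is bounded uniformly in $x$ and $\ell$ (using $\int\psi=1$). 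The term with prefactor $\ell^2$ is then controlled by $\ell^2|\T^2_{\ell^{-1}}|=O(1)$, while the two terms with prefactor $\ell$ are controlled by $\ell\int_{\T^2_{\ell^{-1}}}|\partial_i G_{\R^2}\ast\phi|\,dx$. Because $\partial_i G_{\R^2}\ast\phi$ is a bounded continuous function decaying like $|x|^{-1}$ at infinity, this last integral is $O(\ell^{-1})$, so the whole term is again $O(1)$. Dividing by $\log\ell^{-1}$, all three remainders vanish as $\ell\to0$.

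The heart of the matter is term $\mathrm{(I)}$. Fix $R$ large enough that Lemma \ref{lem-nabla-G} applies (for instance $R=2$, which is legitimate since the supports lie in $(-\tfrac12,\tfrac12)^2$), and split $\T^2_{\ell^{-1}}=B_R\cup(\T^2_{\ell^{-1}}\setminus B_R)$. On $B_R$ the integrand is bounded, as both convolutions are continuous, so this contribution is $O(1)$ and disappears after dividing by $\log\ell^{-1}$. On $\T^2_{\ell^{-1}}\setminus B_R$ I would invoke Lemma \ref{lem-nabla-G} to write $\partial_i G_{\R^2}\ast\phi=\partial_i G_{\R^2}+E_i^\phi$ and $\partial_j G_{\R^2}\ast\psi=\partial_j G_{\R^2}+E_j^\psi$ with $|E_i^\phi(x)|,|E_j^\psi(x)|\le C_R|x|^{-2}$. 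Expanding the product yields the leading piece $\partial_i G_{\R^2}\,\partial_j G_{\R^2}$ plus cross and quadratic error terms; these errors are absolutely integrable and uniformly bounded in $\ell$, since $\int_{|x|\ge R}|x|^{-1}|x|^{-2}\,dx<\infty$ and $\int_{|x|\ge R}|x|^{-4}\,dx<\infty$. Hence all error contributions are $O(1)$ and again vanish after division by $\log\ell^{-1}$.

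It remains to evaluate the leading piece $\int_{\T^2_{\ell^{-1}}\setminus B_R}\partial_i G_{\R^2}\,\partial_j G_{\R^2}\,dx$. For $i=j$ this is exactly the quantity treated by Lemma \ref{lem-limit-integral-G}, which gives $\frac1{\log\ell^{-1}}\int_{\T^2_{\ell^{-1}}\setminus B_R}(\partial_i G_{\R^2})^2\,dx\to\frac1{4\pi}$. For $i\ne j$ the integrand equals $\frac1{4\pi^2}\frac{x_1x_2}{|x|^4}$, which is odd in $x_1$; as both $\T^2_{\ell^{-1}}$ and $B_R$ are symmetric under $x_1\mapsto-x_1$, the integral vanishes for every $\ell$. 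Assembling the three steps yields $\lim_{\ell\to0}\<\partial_i G\ast\phi_\ell,\partial_j G\ast\psi_\ell\>/\log\ell^{-1}=\frac1{4\pi}\delta_{i,j}$.

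I expect the main obstacle to be the bookkeeping that guarantees every discarded term is genuinely $O(1)$ uniformly in $\ell$: one must verify the $L^1$-growth rate of $\partial_i G_{\R^2}\ast\phi$ on the expanding torus (to tame the $\ell$-prefactor remainders), the local boundedness of the convolutions near the origin (for the $B_R$ piece), and the absolute integrability of $|x|^{-3}$ and $|x|^{-4}$ at infinity (for the Lemma \ref{lem-nabla-G} errors). Once these uniform bounds are in place, only the two logarithmically divergent pieces survive, and they are precisely the ones resolved by Lemma \ref{lem-limit-integral-G} and by the oddness symmetry.
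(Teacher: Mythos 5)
Your proof is correct and follows the paper's strategy almost step for step: kill the three $\zeta$-remainders of Lemma \ref{lem-expression-derivative-G} using boundedness of $\nabla\zeta$ and the $O(\ell^{-1})$ bound on $\int_{\T^2_{\ell^{-1}}}|\partial_i G_{\R^2}\ast\phi|\,dx$, split the main term at a fixed ball $B_R$, replace the convolutions by $\partial_i G_{\R^2}$ outside $B_R$ via Lemma \ref{lem-nabla-G} (with errors of size $|x|^{-3}$ and $|x|^{-4}$, hence $O(1)$ after integration), and finish with Lemma \ref{lem-limit-integral-G}. The one place you genuinely diverge is the off-diagonal case $i\ne j$: the paper proves the exact identity $J^\ell_{1,2}=J^\ell_{2,1}=0$ for every $\ell$ by exploiting parity of the convolved functions $\partial_1 G_{\R^2}\ast\phi$ and $\partial_2 G_{\R^2}\ast\psi$ — this is precisely where the radial symmetry of $\phi,\psi$ enters the paper's argument — whereas you run the same error decomposition as in the diagonal case and invoke only the oddness of the explicit kernel product $\partial_1 G_{\R^2}\,\partial_2 G_{\R^2}=\frac{1}{4\pi^2}\frac{x_1x_2}{|x|^4}$ over the symmetric domain. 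Your variant is slightly more economical in hypotheses: it never uses the radial symmetry of $\phi$ and $\psi$, only that they are compactly supported probability densities, so it in fact proves a marginally more general statement; the price is that your off-diagonal terms are only $O(1)$ rather than identically zero, which is immaterial after dividing by $\log\ell^{-1}$. The bookkeeping obstacles you flag (local boundedness of the convolutions on $B_R$, integrability of the error tails, uniformity in $\ell$) are exactly the points the paper verifies, so nothing is missing.
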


\begin{proof}
Recall Lemma \ref{lem-expression-derivative-G} for the expression of $\<\partial_i G\ast \phi_\ell, \partial_j G\ast \psi_\ell\>$; we denote the four terms by $I^\ell_n,\, n=1,2,3,4$.

\emph{Step 1}. We first show that
  \begin{equation}\label{prop-growth-norm.1}
  \lim_{\ell\to 0} \frac{|I^\ell_n|}{\log \ell^{-1}} =0, \quad n=2,3,4.
  \end{equation}
Indeed, for $I^\ell_2$, since $\zeta\in C^\infty(\T^2)$, we have
  $$\bigg|\int_{\T^2} \partial_j \zeta(\ell(x-z)) \, \psi(z)\, dz \bigg| \le \|\partial_j \zeta \|_{L^\infty} \int_{\T^2} \psi(z)\, dz = \|\partial_j \zeta \|_{L^\infty}, $$
therefore,
  $$\aligned
  |I^\ell_2| &\le \ell \|\partial_j \zeta \|_{L^\infty} \int_{\T^2_{\ell^{-1}}}\! \big| (\partial_i G_{\R^2} \ast \phi)(x) \big| \, dx \\
  &= \ell \|\partial_j \zeta \|_{L^\infty} \bigg(\int_{B_R} + \int_{\T^2_{\ell^{-1}} \setminus B_R} \bigg) \big| (\partial_i G_{\R^2} \ast \phi)(x) \big| \, dx
  \endaligned $$
for some fixed $R\in [1, (2\ell)^{-1})$. The first integral is bounded by some constant $C_R$; by Lemma \ref{lem-nabla-G}, one has  $\big| (\partial_i G_{\R^2} \ast \phi)(x) \big| \le C'_R/|x|$ for all $|x|\ge R$, and thus
  $$\int_{\T^2_{\ell^{-1}} \setminus B_R} \big| (\partial_i G_{\R^2} \ast \phi)(x) \big| \, dx \le \int_{R\le |x|\le \ell^{-1}} \frac{C'_R}{|x|}\, dx = 2\pi C'_R (\ell^{-1} -R). $$
Summarizing these estimates we arrive at $|I^\ell_2| \le C_{1,R} \ell + C_{2,R}$, which implies that \eqref{prop-growth-norm.1} holds for $n=2$. The proofs for the other two limits are similar.

\emph{Step 2}. Now we consider the term $I^\ell_1$, and denote it with more precise notations
  $$J^\ell_{i,j}= \int_{\T^2_{\ell^{-1}}}\! (\partial_i G_{\R^2} \ast \phi)(x) \, (\partial_j G_{\R^2} \ast \psi)(x) \, dx,\quad 1\le i,j \le 2.$$
We begin with showing that the off-diagonal terms
  $$\aligned
  J^\ell_{1,2} = J^\ell_{2,1} = \int_{\T^2_{\ell^{-1}}} (\partial_1 G_{\R^2} \ast \phi) (x) (\partial_2 G_{\R^2} \ast \psi) (x)\, dx
  \endaligned  $$
vanish for any $\ell\in (0,1)$. Indeed, by symmetry of $\phi$ and $\psi$, one can show that
  $$\aligned
  (\partial_1 G_{\R^2} \ast \phi) (-x_1, x_2)= -(\partial_1 G_{\R^2} \ast \phi) (x_1, x_2),  &\quad (\partial_1 G_{\R^2} \ast \phi) (x_1, -x_2) = (\partial_1 G_{\R^2} \ast \phi) (x_1, x_2), \\
  (\partial_2 G_{\R^2} \ast \psi) (-x_1, x_2)= (\partial_2 G_{\R^2} \ast \psi) (x_1, x_2),  &\quad (\partial_2 G_{\R^2} \ast \psi) (x_1, -x_2) = -(\partial_2 G_{\R^2} \ast \psi) (x_1, x_2).
  \endaligned $$
Due to cancellation of integrals in the four quadrants, we easily conclude that
  \begin{equation} \label{eq:vanishment-cross-terms}
  J^\ell_{1,2} = J^\ell_{2,1}=0.
  \end{equation}

Next, we show that $J^\ell_{1,1}$ has nontrivial limit; similar proof works for $J^\ell_{2,2}$. We have
  $$\aligned
  J^\ell_{1,1} &= \bigg(\int_{B_R}+ \int_{\T^2_{\ell^{-1}} \setminus B_R} \bigg) (\partial_1 G_{\R^2} \ast \phi) (x) (\partial_1 G_{\R^2} \ast \psi) (x)\, dx,
  \endaligned $$
where $B_R$ is the ball centered at the origin with radius $R\ge 1$. It is clear that
  $$\lim_{\ell\to 0} \frac1{\log \ell^{-1}} \int_{B_R} (\partial_1 G_{\R^2} \ast \phi) (x) (\partial_1 G_{\R^2} \ast \psi) (x)\, dx =0 ,$$
thus it holds that
  \begin{equation}\label{eq:limit-J-ell-1-1}
  \lim_{\ell\to 0} \frac{J^\ell_{1,1}}{\log \ell^{-1}} = \lim_{\ell\to 0} \frac1{\log \ell^{-1}} \int_{\T^2_{\ell^{-1}} \setminus B_R} (\partial_1 G_{\R^2} \ast \phi) (x) (\partial_1 G_{\R^2} \ast \psi) (x)\, dx.
  \end{equation}
We have
  $$\aligned
  & \big| (\partial_1 G_{\R^2} \ast \phi) (x) (\partial_1 G_{\R^2} \ast \psi) (x) -(\partial_1 G_{\R^2}(x) )^2 \big| \\
  &\quad \le |(\partial_1 G_{\R^2} \ast \psi) (x)|\, | (\partial_1 G_{\R^2} \ast \phi) (x) - \partial_1 G_{\R^2}(x) | \\
  &\quad \quad + | \partial_1 G_{\R^2} (x)|\, | (\partial_1 G_{\R^2} \ast \psi) (x) - \partial_1 G_{\R^2}(x) |;
  \endaligned $$
thanks to Lemma \ref{lem-nabla-G}, for any $|x|\ge R\ge 1$,
  $$\big| (\partial_1 G_{\R^2} \ast \phi) (x) (\partial_1 G_{\R^2} \ast \psi) (x) -(\partial_1 G_{\R^2}(x) )^2 \big| \le \frac{C_R}{|x|^3}. $$
Since
  $$\limsup_{\ell\to 0} \frac1{\log \ell^{-1}} \int_{\T^2_{\ell^{-1}} \setminus B_R} \frac{C_R}{|x|^3}\, dx \le \limsup_{\ell\to 0} \frac1{\log \ell^{-1}} \int_{\R^2 \setminus B_R} \frac{C_R}{|x|^3}\, dx =0, $$
we conclude from \eqref{eq:limit-J-ell-1-1} that
  $$\lim_{\ell\to 0} \frac{J^\ell_{1,1}}{\log \ell^{-1}} =  \lim_{\ell\to 0} \frac{1}{\log \ell^{-1}} \int_{\T^2_{\ell^{-1}} \setminus B_R} (\partial_1 G_{\R^2}(x) )^2 \, dx = \frac1{4\pi}, $$
where the last identity follows from Lemma \ref{lem-limit-integral-G}.
\end{proof}

Recall the definition of $\Gamma_\ell$ in \eqref{eq:Gamma-1-ell}:
  $$ \Gamma_\ell= 2\sqrt{\kappa}\, \|K\ast \theta_\ell \|_{L^2}^{-1}. $$
Then we have
  $$4\kappa = \Gamma_\ell^2 \|K\ast \theta_\ell \|_{L^2}^2 = 2 \Gamma_\ell^2 \|\partial_1 G\ast \theta_\ell \|_{L^2}^2, $$
by Proposition \ref{prop-growth-norm}, we deduce that
  \begin{equation} \label{Gamma-ell-asymptotics}
  \lim_{\ell \to 0} \Gamma_\ell^2 \log \ell^{-1} = 8\pi \kappa.
  \end{equation}

Now we are ready to provide

\begin{proof}[Proof of Proposition \ref{prop:limit-matrix}]
First we derive the expression for $\nabla Q_{H,3}^\ell(0)$. By the definition of $Q_{H,3}^\ell$, we have
  $$\aligned
  Q_{H,3}^\ell(a)&= \int_{\T^2} \sigma_H^\ell (x)\, \sigma_3^\ell (a-x) \, dx\\
  &= \Gamma_\ell \gamma_\ell \int_{\T^2} (K\ast \theta_\ell)(x)\, (G\ast \chi_\ell) (a-x) \, dx.
  \endaligned $$
Then,
  $$\nabla Q_{H,3}^\ell(a) = \Gamma_\ell\gamma_\ell \int_{\T^2} (K\ast \theta_\ell)(x)\otimes (\nabla G\ast \chi_\ell) (a-x) \, dx, $$
and thus,
  $$\aligned
  \nabla Q_{H,3}^\ell(0) &= -\Gamma_\ell\gamma_\ell \int_{\T^2} (K\ast \theta_\ell)(x)\otimes (\nabla G\ast \chi_\ell) (x) \, dx \\
  &= -\Gamma_\ell\gamma_\ell \begin{pmatrix}
  \<\partial_2 G \ast \theta_\ell, \partial_1 G \ast \chi_\ell\> &\, \<\partial_2 G \ast \theta_\ell, \partial_2 G \ast \chi_\ell\> \smallskip \\
  -\<\partial_1 G \ast \theta_\ell, \partial_1 G \ast \chi_\ell\> &\, -\<\partial_1 G \ast \theta_\ell, \partial_2 G \ast \chi_\ell\>
  \end{pmatrix} \\
  &= -\Gamma_\ell\gamma_\ell \begin{pmatrix}
  0 &\, \<\partial_2 G \ast \theta_\ell, \partial_2 G \ast \chi_\ell\> \smallskip \\
  -\<\partial_1 G \ast \theta_\ell, \partial_1 G \ast \chi_\ell\> &\, 0
  \end{pmatrix} ,
  \endaligned $$
where the last step is due to similar proofs of \eqref{eq:vanishment-cross-terms}.

Combining this formula with Proposition \ref{prop-growth-norm} and the limit \eqref{Gamma-ell-asymptotics}, we immediately deduce that if $\gamma_\ell= o(\Gamma_\ell)$, then all the entries of the matrix $\nabla Q_{H,3}^\ell(0)$ have zero limit. Thus we obtain the first assertion of Proposition \ref{prop:limit-matrix}.

We turn to proving assertion (2). If $\gamma_\ell/\Gamma_\ell \to q_0$ as $\ell\to 0$, using again Proposition \ref{prop-growth-norm} and \eqref{Gamma-ell-asymptotics}, we arrive at
  $$\lim_{\ell\to 0} \Gamma_\ell\gamma_\ell \<\partial_2 G \ast \theta_\ell, \partial_2 G \ast \chi_\ell\> = \lim_{\ell\to 0} \frac{\gamma_\ell}{\Gamma_\ell}\, (\Gamma_\ell^2 \log\ell^{-1})\, \frac{\<\partial_2 G \ast \theta_\ell, \partial_2 G \ast \chi_\ell\>}{\log\ell^{-1}}= 2\kappa q_0 $$
which yields the desired result.

Finally we prove assertion (3) of Proposition \ref{prop:limit-matrix}. We have, for $i,j\in \{1,2\}$,
  $$\partial_i \partial_j Q_3^\ell(0) = -\gamma_\ell^2 \int_{\T^2} (\partial_i G \ast \chi_\ell)(x)\, (\partial_j G\ast \chi_\ell)(x) \, dx= -\gamma_\ell^2\, \<\partial_i G \ast \chi_\ell, \partial_j G\ast \chi_\ell\>. $$
Noting that $\gamma_\ell = O(\Gamma_\ell)$ as $\ell\to 0$, it is clear from the above computations that all the terms are uniformly bounded in $\ell\in (0,1)$.
\end{proof}

\section*{Acknowledgements}

The research of the first author is funded by the European Union (ERC, NoisyFluid, No. 101053472). The second author is grateful to the National Key R\&D Program of China (No. 2020YFA0712700), the National Natural Science Foundation of China (Nos. 11931004, 12090010, 12090014) and the Youth Innovation Promotion Association, CAS (Y2021002).\\
Views and opinions expressed are however those of the authors only and do not necessarily reflect those of the European Union or the European Research Council. Neither the European Union nor the granting authority can be held responsible for them.

\end{document}